\DeclareMathOperator{\vol}{vol}
\DeclareMathOperator{\inc}{inc}
\DeclareMathOperator{\NF}{\mathit{NF}}
\DeclareMathOperator{\im}{im}
\newtheorem{lemma}{Lemma}[section]
\newtheorem{thm}[lemma]{Theorem}
\newtheorem{prop}[lemma]{Proposition}
\newtheorem{cor}[lemma]{Corollary}
\newtheorem*{thm*}{Theorem}
\theoremstyle{definition}
\newtheorem{defn}[lemma]{Definition}
\newtheorem{quest}[lemma]{Question}
\newtheorem{rem}[lemma]{Remark}
\newcommand{\matN}{\ensuremath {\mathbb{N}}}
\newcommand{\R} {\ensuremath {\mathbb{R}}}
\newcommand{\Q} {\ensuremath {\mathbb{Q}}}
\newcommand{\Z} {\ensuremath {\mathbb{Z}}}
\newcommand{\matH} {\ensuremath {\mathbb{H}}}
\newcommand{\vare} {\ensuremath{\varepsilon}}
\newcommand{\vola} {{\rm vol}_{\rm alg}}
\newcommand{\str} {\ensuremath {{\rm str}}}
\newcommand {\tors}{\operatorname{tors}}
\DeclareMathOperator{\rk}{rk}
 \newcommand\norm{\bBigg@{0.8}}
 \newcommand{\inparens}[2][flex]{\csname #1l\endcsname(#2%
                                 \csname #1r\endcsname)\mathclose{}}
 \newcommand{\inangles}[2][flex]{\csname #1l\endcsname\langle#2%
                                 \csname #1r\endcsname\rangle\mathclose{}} 
 \newcommand{\innorm}[2][flex]{\csname #1l\endcsname|#2%
                                 \csname #1r\endcsname|\mathclose{}}
 \newcommand{\indnorm}[2][flex]{\csname #1l\endcsname\|#2%
                                 \csname #1r\endcsname\|\mathclose{}}
 \newcommand{\indnorml}[4][flex]{\csname #1l\endcsname\|#2%
                                 \csname #1r\endcsname\|_{#3}^{#4}\mathclose{}}
\newcommand{\sv}[2][flex]{\indnorm[#1]{#2}}%
\newcommand{\isv}[2][norm]{\indnorml[#1]{#2}{\Z}{}}
\newcommand{\pfcl}[2][flex]{\csname #1l\endcsname[#2%
                            \csname #1r\endcsname]}
\newcommand{\ifsv}[2][norm]{\csname #1l\endcsname\bracevert\!#2\!%
                            \csname #1r\endcsname\bracevert}
\newcommand{\stisv}[2][flex]{\indnorml[#1]{#2}{\Z}{\infty}}
\newcommand{\pfc}[1]{%
  \widehat{#1}}
\newcommand{\symmdiff}{%
  \mathbin{\bigtriangleup}}
\newcommand{\linfz}[1]{%
  L^\infty(#1,\Z)}
\DeclareMathOperator{\EMD*}{EMD^*}
\DeclareMathOperator{\MD}{MD}
\DeclareMathOperator{\Aut}{Aut}
\DeclareMathOperator{\map}{map}
\DeclareMathOperator{\id}{id}
\newcommand{\actson}{\curvearrowright}
\def\geq{\geqslant}
\def\leq{\leqslant}
\title{Integral foliated simplicial volume of aspherical manifolds}
\author[R.~Frigerio]{Roberto Frigerio}
\address{Dipartimento di Matematica \\
Universit\`a di Pisa \\
56127~Pisa,}
\email{frigerio@dm.unipi.it}
\author[C.~L\"oh]{Clara L\"oh}
\address{Fakult\"at f\"ur Mathematik\\
         Universit\"at Regensburg\\
         93040~Regensburg\\
         }
\email{clara.loeh@mathematik.uni-r.de}
\author[C. Pagliantini]{Cristina Pagliantini}
\address{Department Mathematik\\
         ETH Zentrum\\
         8092~Z\"urich\\
        }
\email{cristina.pagliantini@math.ethz.ch}        
\author[R. Sauer]{Roman Sauer}
\address{Karlsruhe Institute of Technology\\
76131~Karlsruhe\\}
\email{roman.sauer@kit.edu}
\thanks{}
\subjclass[2010]{57R19, 55N99, 20E18, 20F65, 28D15}
\keywords{simplicial volume, integral foliated simplicial volume, stable integral simplicial volume, aspherical manifolds, residually finite amenable groups, hyperbolic manifolds}
\thanks{}
\begin{document}

\begin{abstract}
  Simplicial volumes measure the complexity of fundamental cycles of
  manifolds. In this article, we consider the relation between
  simplicial volume and two of its variants --  
  the stable  integral simplicial volume and the integral foliated simplicial volume. 
  The definition of the latter depends on a choice of a measure preserving action 
  of the fundamental group  on a probability space. 
  
  We show that integral foliated simplicial volume is
  monotone with respect to weak containment of measure preserving actions and yields upper 
  bounds on (integral) homology growth. 
  
  Using ergodic theory we prove that simplicial volume, integral foliated
  simplicial volume and stable integral simplicial volume coincide 
  for closed hyperbolic $3$-manifolds and closed aspherical manifolds with 
  amenable residually finite fundamental group (being equal to zero in the latter case).
  
  However, we show that integral foliated simplicial volume
  and the classical simplicial volume do \emph{not} coincide for
  hyperbolic manifolds of dimension at least~$4$.
  
\end{abstract}
\maketitle

\section{Introduction}

Simplicial volume is a homotopy invariant of manifolds, measuring the
complexity of singular fundamental cycles with
$\R$-coefficients. The
\emph{simplicial volume} of an oriented closed connected
$n$-manifold~$M$ is defined as
\[ \sv M := \inf \bigl\{ |c|_1 
                 \bigm| \text{$c \in C_n(M;\R)$, $\partial c = 0$, 
                            $[c] = [M]_\R$}\bigr\} \in \R_{\geq 0},
\]
where $|c|_1$ denotes the $\ell^1$-norm of the singular chain~$c$ with
respect to the basis of all singular $n$-simplices in~$M$. Despite its
topological definition, simplicial volume carries geometric
information and allows for a rich interplay between topological and
geometric properties of manifolds~\cite{vbc,mapsimvol}.

A long-standing purely topological problem on simplicial volume was
formulated by Gromov~\cite[p.~232]{gromovasymptotic}\cite[3.1.~(e) on
  p.~769]{gromov-cycles}:

\begin{quest}\label{Gromov-quest}
	Let $M$ be an
	oriented closed connected aspherical manifold. Does $\sv M = 0$ 
	imply $\chi (M) = 0$? Does $\sv M = 0$ imply the vanishing of $L^2$-Betti numbers 
	of~$M$?
\end{quest}

 A possible strategy to answer Question~\ref{Gromov-quest} in the affirmative is to replace
simplicial volume by a suitable integral approximation, and then to use
a Poincar\'e duality argument to bound ($L^2$)-Betti numbers in terms
of this integral approximation. Finally, one should relate the integral approximation 
to the simplicial volume on aspherical manifolds. 

One instance of such an integral approximation is stable integral
simplicial volume. The \emph{stable integral simplicial volume} of an
oriented closed connected manifold~$M$ is defined as
\[ \stisv M := \inf \Bigl\{ \frac1d \cdot \isv {\overline M} 
                    \Bigm| \text{$d \in \matN$ and $\overline M \rightarrow M$ 
                           is a $d$-sheeted covering}
                    \Bigr\} 
   ,
\]
where the integral simplicial volume~$\isv{\overline M}$ is
defined like the ordinary simplicial volume but using $\Z$-fundamental
cycles.

Another instance of this strategy is integral foliated simplicial
volume. A definition of integral foliated simplicial volume and a
corresponding $L^2$-Betti number estimate was suggested by
Gromov~\cite[p.~305f]{gromov} and confirmed by
Schmidt~\cite{mschmidt}. \emph{Integral foliated
simplicial volume}~$\ifsv M = \inf_\alpha \ifsv M ^\alpha$ is defined
in terms of fundamental cycles with twisted coefficients in~$\linfz
X$, where $\alpha = \pi_1(M) \actson X$ is a probability space with a
measure preserving action of the fundamental group of~$M$ (see
Section~\ref{sec:ifsv} for the exact definition).

For all oriented closed connected manifolds~$M$ these simplicial
volumes fit into the sandwich~\cite{loehpagliantini}
\[ \sv M \leq \ifsv M \leq \stisv M.
\]

This leads to the following fundamental problems~\cite[Question~1.7,
  Question~7.2, Question~4.21]{loehpagliantini}:

\begin{quest}\label{first:quest}
  Do simplicial volume and integral foliated simplicial volume
  coincide for aspherical manifolds? 
\end{quest}

\begin{quest}
 Do integral foliated simplicial
  volume and stable integral simplicial volume coincide for aspherical
  manifolds with residually finite fundamental group?
\end{quest}

\begin{quest}\label{qu: dependency on prob space}
  How does integral foliated simplicial volume depend on the action on the 
  probability space? If it does, is it an interesting dynamical invariant 
  of the action? Which role is played by the Bernoulli shift? 
\end{quest}

In this article, we contribute (partial) solutions to these questions.
On the one hand, we show that integral foliated simplicial volume is
compatible with weak containment of measure preserving actions. In
combination with results from ergodic theory this shows that integral
foliated simplicial volume and stable integral simplicial volume
coincide in various cases, e.g., for amenable residually finite or
free fundamental group.  Moreover, we give a geometric proof of the
fact that stable integral simplicial volume, integral foliated
simplicial volume, and simplicial volume all are zero for aspherical
manifolds with amenable fundamental group.

On the other hand, we show that integral foliated simplicial
volume and classical simplicial volume do \emph{not} coincide for
hyperbolic manifolds of dimension at least~$4$. Since hyperbolic
manifolds are aspherical, this answers Question~\ref{first:quest} in
the negative (even when restricting to manifolds with residually
finite fundamental group).

We now describe these results in more detail.

\subsection{Integral foliated simplicial volume and weak containment}

The measure preserving actions used to define integral foliated simplicial
volume are organized into a hierarchy by means of weak containment
(see Section~\ref{subsec:wcprelim} for the definitions). Integral 
foliated simplicial volume is compatible with this hierarchy in the 
following sense (Theorem~\ref{thm:wc}):

\begin{thm}[monotonicity of integral foliated simplicial volume]\label{intro-thm:wc}
  Let $M$ be an oriented closed connected manifold with 
  fundamental group~$\Gamma$, and let $\alpha = \Gamma
  \actson (X,\mu)$ and $\beta= \Gamma
  \actson (Y,\nu)$ be free non-atomic standard
  $\Gamma$-spaces with~$\alpha \prec \beta$ (i.e., $\alpha$ is weakly contained in $\beta$). Then
  \[ \ifsv M^\beta \leq \ifsv M^\alpha. 
  \]
\end{thm}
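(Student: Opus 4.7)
The plan is to construct a near-optimal $\beta$-parametrized fundamental cycle on $Y$ by transferring the finite combinatorial data of a near-optimal $\alpha$-parametrized fundamental cycle on $X$, using the standard partition-and-intersection-measure formulation of weak containment. Fix $\varepsilon > 0$ and choose $c = \sum_{i=1}^N f_i \otimes \widetilde{\sigma}_i \in \linfz{X} \otimes_{\Z\Gamma} C_n(\widetilde{M};\Z)$ representing $[M]$ with $|c|_1 < \ifsv{M}^\alpha + \varepsilon$, where $\widetilde{\sigma}_1,\ldots,\widetilde{\sigma}_N$ are $\Gamma$-orbit representatives of singular $n$-simplices in $\widetilde{M}$. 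Since each $f_i$ is essentially bounded and $\Z$-valued, a common refinement yields a finite measurable partition $\mathcal{A} = \{A_r\}_{r\in R}$ of $X$ and integers $n_{i,r}$ with $f_i = \sum_r n_{i,r}\chi_{A_r}$ and $|c|_1 = \sum_{i,r} |n_{i,r}|\,\mu(A_r)$. Expanding the boundary operator in the chosen $\Z\Gamma$-basis introduces only finitely many group elements, collected in a finite set $F\subset\Gamma$, and both the cycle condition $\partial c = 0$ and the condition that $[c]=[M]$ unwind into a finite system of $L^\infty$-identities among translates $\gamma\cdot f_i$ for $\gamma\in F$.

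Next I would apply the definition of weak containment $\alpha\prec\beta$: given $\delta>0$, there is a measurable partition $\{B_r\}_{r\in R}$ of $Y$ such that $|\nu(\gamma B_r\cap B_s) - \mu(\gamma A_r\cap A_s)| < \delta$ for every $\gamma\in F$ and every $r,s\in R$. Defining $g_i := \sum_r n_{i,r}\chi_{B_r} \in \linfz{Y}$ and $c' := \sum_i g_i\otimes\widetilde{\sigma}_i$, a direct estimate gives $\bigl||c'|_1 - |c|_1\bigr| \le N\cdot|R|\cdot\max_{i,r}|n_{i,r}|\cdot\delta$, which is $<\varepsilon$ for $\delta$ small enough.

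The main obstacle is showing that $c'$ is a genuine fundamental cycle rather than only an approximate one. After refining $\mathcal{A}$ under $F$-translates, each cycle- and fundamental-class identity for $c$ reduces to a purely combinatorial identity that has to hold on every $F$-type $\tau\in R^F$ in the support of the $F$-type distribution $\mu_F$ of $\mathcal{A}$. Weak containment forces the corresponding distribution $\nu_F$ of $\{B_r\}$ to be $\delta$-close to $\mu_F$ in total variation, but $\nu_F$ may put small mass on $F$-types outside $\operatorname{supp}(\mu_F)$, causing $c'$ to violate the cycle condition on a subset of $Y$ of measure $O(\delta)$. I would handle this either by a measure-theoretic surgery on the exceptional subset, reassigning partition labels to push its $F$-types into $\operatorname{supp}(\mu_F)$ in a way that remains globally consistent with the $\Gamma$-action on $Y$, or by a quantitative filling argument producing a correction chain $d\in \linfz{Y}\otimes_{\Z\Gamma}C_n(\widetilde{M};\Z)$ with $\partial d = -\partial c'$ and $|d|_1 = O(\delta)$. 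Either correction alters $|c'|_1$ by only $O(\delta)$, so after letting $\varepsilon\to 0$ we conclude $\ifsv{M}^\beta \le \ifsv{M}^\alpha$.
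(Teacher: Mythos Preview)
Your norm-transfer step is fine, but the place you flag as ``the main obstacle'' is a genuine gap that you do not close. Neither of your proposed fixes is actually carried out. The surgery option --- relabelling the exceptional $F$-types to lie in $\operatorname{supp}(\mu_F)$ ``in a way that remains globally consistent with the $\Gamma$-action on $Y$'' --- is precisely the hard part, and nothing in your outline explains why such a consistent relabelling exists. The filling option is also incomplete: you need a chain $d$ with $\partial d = -\partial c'$ \emph{and} $|d|_1 = O(\delta)$; small boundary does not in general give a small filling without some further input (contractibility, a cone operator with controlled norm, etc.), and you give none. Finally, even if you force $c'+d$ to be a cycle, you still owe an argument that it represents the fundamental class; you say this ``unwinds into a finite system of $L^\infty$-identities,'' but in fact it unwinds into the existence of an $(n+1)$-chain witnessing $c - i_M^\alpha(z) = \partial b$, and transferring \emph{that} via weak containment reintroduces the same obstacle one degree up.

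The paper sidesteps all of this by never leaving the space~$(X,\mu)$. It writes the given $\alpha$-parametrized cycle as $c = z + \partial b$ with $z$ an integral fundamental cycle, then invokes the characterization of weak containment via the weak topology on $A(\Gamma,X,\mu)$: since $\alpha\prec\beta$, there is an action $\gamma$ on the \emph{same} space $(X,\mu)$ with $\gamma\cong_\Gamma\beta$ and $\gamma$ arbitrarily close to $\alpha$ on the finitely many group elements and partition sets occurring in $b$. One then forms $c' := z + \partial b$ in $C_*(M;\gamma)$ with the \emph{same} coefficient functions. This is tautologically a $\gamma$-parametrized fundamental cycle --- no surgery, no filling. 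The only thing to estimate is the difference in $\ell^1$-norm between the two expressions for $z+\partial b$, which reduces to $\sum_\tau \|g_\tau^\alpha(f_\tau) - g_\tau^\gamma(f_\tau)\|_1$ and is controlled by the weak-closeness of $\gamma$ to $\alpha$. The moral: rather than moving the partition to $(Y,\nu)$ and then repairing the damage, realize $\beta$ (up to isomorphism) on $(X,\mu)$ and keep the chain data fixed.
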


In combination with results from ergodic theory, we obtain the
following consequences for integral foliated simplicial volume:
Bernoulli shift spaces give the maximal (whence bad) value among free
measure preserving actions (Corollary~\ref{cor:maxbernoulli}). If the
fundamental group satisfies the universality property~$\EMD*$
(Definition~\ref{def:EMD*}) from ergodic theory, then integral
foliated simplicial volume and stable integral simplicial volume
coincide (Corollary~\ref{cor:ifsvpfc}). For instance, this applies to
free fundamental groups (Corollary~\ref{cor:free}) and to residually
finite amenable fundamental groups (Corollary~\ref{cor:amenable}).


\subsection{Integral foliated simplicial volume and bounds on homology}

A \emph{chain} in a group $\Gamma$ is a descending 
sequence $\Gamma=\Gamma_0>\Gamma_1>\Gamma_2>\ldots$ of finite index subgroups. 
We associate to a chain a measure preserving action on the coset tree, i.e., the inverse limit of the $\Gamma/\Gamma_i$ (see Subsection~\ref{subsec: actions}). 
We denote the torsion subgroup of a finitely generated abelian group~$A$ by~$\tors A$; it is 
a finite abelian group. 

\begin{thm}[homology bounds]\label{intro-thm:homology bound}
  Let $n \in \matN$. 
  Let $M$ be an oriented closed connected $n$-manifold with fundamental group~$\Gamma$, 
  let $(\Gamma_i)_i$ be a chain of~$\Gamma$, and let $M_i\to M$ be the finite covering 
  associated to $\Gamma_i$. Let $\alpha$ be the standard $\Gamma$-action on the 
  coset tree of $(\Gamma_i)_i$. Then for every~$k\in\matN$ and for every principal ideal domain~$R$ 
  we have (where $\rk_R$ denotes the $R$-dimension of the free part of finitely generated 
  $R$-modules)
  \begin{align*}
    \limsup_{i\to\infty} \frac{\log |\tors H_k(M_i;\Z)|}{[\Gamma:\Gamma_i]} &\le 
    \log(n+1) \cdot 2^{n+1} \cdot 
    \ifsv M^\alpha;\\ 
    \limsup_{i\to\infty} \frac{\rk_R H_k(M_i;R)}{[\Gamma:\Gamma_i]} &\le \ifsv M^\alpha.\\
  \end{align*}
\end{thm}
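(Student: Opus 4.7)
The plan is to reduce the two inequalities to bounds on the integral simplicial volume of each finite cover~$M_i$, and then to invoke chain-level inequalities controlling the rank and the torsion of the homology of an oriented closed $n$-manifold in terms of the $\ell^1$-norm of any integral fundamental cycle.

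For any~$\varepsilon > 0$, fix a parametrized fundamental cycle $c \in C_n(M;\linfz X)$ with $\lone c \le \ifsv M^\alpha + \varepsilon$. Since $X$ is the inverse limit of the finite $\Gamma$-sets $\Gamma/\Gamma_i$, the $\sigma$-algebra on~$X$ is generated by the pullbacks under the projections~$\pi_i \colon X \to \Gamma/\Gamma_i$, and by the martingale convergence theorem, replacing each coefficient function of~$c$ by its conditional expectation with respect to the finite $\sigma$-algebra pulled back from~$\Gamma/\Gamma_i$ yields, for all large enough~$i$, a parametrized fundamental cycle $c_i \in C_n(M;\linfz{\Gamma/\Gamma_i})$ with $\lone{c_i} \le \ifsv M^\alpha + 2\varepsilon$. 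The $\Z\Gamma$-module identification $\linfz{\Gamma/\Gamma_i} \cong \Z[\Gamma/\Gamma_i]$ combined with the Shapiro-type isomorphism $C_\ast(M;\Z[\Gamma/\Gamma_i]) \cong C_\ast(M_i;\Z)$ converts $c_i$ into an integral fundamental cycle $z_i \in C_n(M_i;\Z)$ whose $\ell^1$-norm satisfies $|z_i|_1 = [\Gamma:\Gamma_i] \cdot \lone{c_i}$. Hence $\isv{M_i} \le [\Gamma:\Gamma_i] \cdot (\ifsv M^\alpha + 2\varepsilon)$ for all sufficiently large~$i$.

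Next I would apply chain-level bounds for any oriented closed connected $n$-manifold~$N$ and any integral fundamental cycle $z = \sum_j a_j \sigma_j$ of~$N$. The subcomplex $D_\ast \subseteq C_\ast(N;\Z)$ generated by the $\sigma_j$ and all their subsimplices has at most $\binom{n+1}{k+1}\cdot |z|_1 \le 2^{n+1}\cdot |z|_1$ singular $k$-simplices in each degree, while the simplicial boundary matrices have entries in~$\{-1,0,1\}$ with at most $n+1$ nonzero entries per column. A chain-level Poincar\'e duality argument via cap product with~$[z]$ shows that the inclusion $D_\ast \hookrightarrow C_\ast(N;R)$ is surjective on homology in each degree, and a refinement exploiting the dual cellulation bounds the effective rank by the number of top simplices, yielding $\rk_R H_k(N;R) \le |z|_1$. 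Hadamard's inequality applied to the elementary divisors of the integer boundary operators on~$D_\ast$ then gives $\log|\tors H_k(N;\Z)| \le \log(n+1) \cdot 2^{n+1} \cdot |z|_1$. Applying these to~$N = M_i$ with $z = z_i$, dividing by~$[\Gamma:\Gamma_i]$, taking the limsup, and letting~$\varepsilon \to 0$ produces the two claimed inequalities.

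The main technical obstacle lies in the first step: the cylinder-function approximation has to be carried out in a $\Gamma$-equivariant, cycle-preserving, and $L^1$-continuous way, and one must correctly track the normalization factor $[\Gamma:\Gamma_i]$ relating the $L^1$-norm on $\linfz{\Gamma/\Gamma_i}$-coefficients to the $\ell^1$-norm on~$C_\ast(M_i;\Z)$ produced by the Shapiro identification. A secondary subtlety is the precise dimensional constant $\log(n+1)\cdot 2^{n+1}$ in the torsion bound, which requires a careful and uniform Hadamard-type estimate on all simplicial boundary matrices of~$D_\ast$ in the spirit of Sauer's torsion-growth inequalities; the rank bound similarly requires the Poincar\'e-duality refinement to avoid losing a factor of~$2^{n+1}$.
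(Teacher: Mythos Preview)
Your overall strategy matches the paper's: reduce to bounds on $\isv{M_i}/[\Gamma:\Gamma_i]$ in terms of~$\ifsv M^\alpha$, then apply static inequalities controlling torsion and rank of homology by integral simplicial volume. The paper handles both ingredients largely by citation: for the first step it invokes Theorem~\ref{thm: relation foliated stable} (from L\"oh--Pagliantini), which gives the exact equality $\ifsv M^\alpha = \lim_i \isv{M_i}/[\Gamma:\Gamma_i]$; for the torsion bound it quotes Sauer's inequality $\log|\tors H_k(N;\Z)|\le \log(n+1)\binom{n+1}{k+1}\isv N$ verbatim. Only the rank bound $\rk_R H_k(N;R)\le\isv N$ is proved on the spot (Lemma~\ref{lem:bettiestimate}), by exactly the Poincar\'e-duality cap-product argument you sketch. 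So your proposal is not a different route; it is the paper's route with the two black boxes unpacked.

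One genuine slip in your first step: conditional expectation of a $\Z$-valued function with respect to a finite sub-$\sigma$-algebra is an \emph{average} over atoms, hence in general only $\Q$-valued, so the martingale projection of~$c$ does not land in $C_n(M;\linfz{\Gamma/\Gamma_i})$. The clean fix (and essentially how Theorem~\ref{thm: relation foliated stable} is established) is to write $c = i_M^\alpha(c_\Z) + \partial b$ for an integral fundamental cycle~$c_\Z$ and a chain~$b$, then approximate the finitely many bounded $\Z$-valued coefficients of~$b$ in~$L^1$ by $\Z$-valued cylinder functions (each such coefficient is a finite $\Z$-combination of characteristic functions of measurable sets, and every measurable set in the coset tree is approximated in measure by cylinder sets), and set $c_i := i_M(c_\Z) + \partial b_i$. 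This is automatically a parametrized fundamental cycle at level~$i$, and its $\ell^1$-norm converges to that of~$c$. You rightly flag this step as delicate; the point is to approximate~$b$, not~$c$.
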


We note that by L\"uck's approximation theorem 
\[
	\limsup_{i\to\infty} \frac{\dim_\Q H_k(M_i;\Q)}{[\Gamma:\Gamma_i]}=\lim_{i\to\infty} \frac{\dim_\Q H_k(M_i;\Q)}{[\Gamma:\Gamma_i]}
\]
equals the $k$-th $L^2$-Betti number of $M$ provided $(\Gamma_i)_i$ is a residual 
chain, which means that $\Gamma_i<\Gamma$ is normal and the intersection of 
all~$\Gamma_i$ is the trivial group.

\subsection{Closed hyperbolic manifolds}
Let us now recall what is known about the three types of simplicial volume that we are dealing with for closed hyperbolic manifolds. 

A celebrated result by Gromov and Thurston states that the simplicial volume of a closed hyperbolic manifold is equal to the Riemannian volume divided by the volume $v_n$ of the regular ideal geodesic $n$-simplex in the hyperbolic space $\mathbb{H}^n$~\cite{vbc,thurston}. Notice that the regular ideal geodesic $n$-simplex is unique up to isometry. 

For closed hyperbolic surfaces, it is known that simplicial volume, stable integral simplicial volume, and integral foliated simplicial volume all coincide~\cite[Example~6.2]{loehpagliantini}.

For closed hyperbolic $3$-manifolds, the integral foliated simplicial
volume is equal to the simplicial volume~\cite[Theorem
  1.1]{loehpagliantini}, but the exact relation with stable integral
simplicial volume was unknown. In Section~\ref{subsec:wcapps} we
complete the picture in dimension $3$, showing that the three
considered flavours of simplicial volume are equal
(Corollary~\ref{cor:hyp3}).

\begin{thm}\label{3-dimensional:equality}
 Let $M$ be an oriented closed connected hyperbolic $3$-manifold. Then 
$$
\sv M=\ifsv M= \stisv M.
$$
\end{thm}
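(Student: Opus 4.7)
The sandwich inequality $\sv M \le \ifsv M \le \stisv M$ is a general fact, and the L\"oh--Pagliantini equality $\ifsv M = \sv M$ for closed hyperbolic $3$-manifolds (\cite[Theorem~1.1]{loehpagliantini}) disposes of the lower half. So only the inequality $\stisv M \le \sv M$ remains, and this is what I would now target via the weak-containment machinery developed in Section~\ref{subsec:wcapps}.

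Since $\Gamma := \pi_1(M)$ is residually finite (Selberg's lemma applied to a faithful representation into $\GL(\matC)$, or more strongly via Agol--Wise), I would fix a residual chain $(\Gamma_i)_i$ in $\Gamma$, write $M_i \to M$ for the finite regular coverings it determines, and let $\alpha$ be the associated free standard action of $\Gamma$ on the coset tree $X := \varprojlim_i \Gamma/\Gamma_i$. The key step is to promote the L\"oh--Pagliantini identity from an infimum over all standard $\Gamma$-spaces to the concrete equality
\[
  \ifsv M^\alpha = \sv M.
\]
The lower bound $\sv M \le \ifsv M \le \ifsv M^\alpha$ is automatic from the sandwich, so only $\ifsv M^\alpha \le \sv M$ needs work. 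By \cite{loehpagliantini} one can find free standard $\Gamma$-spaces $\beta$ with $\ifsv M^\beta$ arbitrarily close to $\sv M$; the monotonicity of Theorem~\ref{intro-thm:wc} then reduces the problem to proving the weak containment $\beta \prec \alpha$. The coset-tree action on a residual chain is free and profinite, and for a linear residually finite group it ought to weakly contain any smearing-type action by a finite-partition approximation argument, in the same spirit as the proofs of Corollaries~\ref{cor:amenable} and~\ref{cor:free}; this is precisely the content of Corollary~\ref{cor:ifsvpfc} specialised to the hyperbolic $3$-manifold setting.

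Once $\ifsv M^\alpha = \sv M$ is secured, a discretisation at each finite layer closes the argument. A parameterised fundamental cycle with coefficients in $\linfz X$ restricts, at each layer $\Gamma/\Gamma_i$, to an ordinary integer fundamental cycle of the cover $M_i$ whose $\ell^1$-norm is controlled by the parameterised norm: in fact $\linfz X$-valued cycles are inverse limits of $\linfz{\Gamma/\Gamma_i}$-valued cycles, and the latter correspond, up to the normalisation factor $[\Gamma:\Gamma_i]$, to integer fundamental cycles of~$M_i$. Therefore
\[
  \limsup_{i\to\infty} \frac{\isv{M_i}}{[\Gamma:\Gamma_i]} \;\le\; \ifsv M^\alpha \;=\; \sv M,
\]
and passing to the infimum over all finite covers gives $\stisv M \le \sv M$, completing the chain of equalities.

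The main obstacle is producing the weak containment $\beta \prec \alpha$ (equivalently, identifying the coset tree of a residual chain as an explicit action realising the L\"oh--Pagliantini infimum); the discretisation step is formal once one has a handle on the tower structure of~$X$ and the correspondence between $\linfz X$-valued fundamental cycles of~$M$ and integer fundamental cycles of the~$M_i$.
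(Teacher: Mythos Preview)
Your outline has the right architecture---use weak containment to pin the parametrized norm for a profinite action, then pass to finite covers---but the crucial step, the weak containment~$\beta \prec \alpha$, is asserted rather than proved, and your suggested justification does not work. You write that ``for a linear residually finite group [the coset-tree action] ought to weakly contain any smearing-type action'', invoking Corollaries~\ref{cor:amenable} and~\ref{cor:free} as models. But those corollaries rely on structural facts specific to amenable and free groups (Foreman--Weiss weak equivalence, respectively Kechris's $\EMD*$ for free groups); neither mechanism generalises to arbitrary linear residually finite groups. Indeed, $\mathrm{SL}(n,\Z)$ for~$n>2$ is linear and residually finite yet fails~$\EMD*$ (see Remark after Corollary~\ref{cor:hyp3}), so linearity and residual finiteness alone cannot force the weak containment you need. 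Your appeal to Corollary~\ref{cor:ifsvpfc} is circular: that corollary \emph{assumes} $\EMD*$; it does not supply it.

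The paper closes this gap via Proposition~\ref{vf}: Agol's virtual fibering theorem exhibits $\Gamma$ as (virtually) a surface-by-$\Z$ group, and then the Bowen--Tucker-Drob inheritance theorem for property~$\MD$ (equivalent to~$\EMD*$ for residually finite groups), together with the Lubotzky--Shalom input for surface groups, yields~$\EMD*$ for~$\Gamma$. With $\EMD*$ in hand, Corollary~\ref{cor:ifsvpfc} gives~$\ifsv M = \ifsv M^{\pfc\Gamma} = \stisv M$ directly via Theorem~\ref{thm: relation foliated stable}; no separate ``discretisation'' argument is required (your discretisation step is in fact already the content of Theorem~\ref{thm: relation foliated stable}). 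Note also that the paper works with the full profinite completion~$\pfc\Gamma$ rather than the coset tree of a single residual chain; $\EMD*$ is phrased in terms of~$\pfc\Gamma$, and passing to a single chain introduces an extra (and unnecessary) issue of whether that particular coset tree weakly contains~$\pfc\Gamma$.
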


This result basically answers a question~\cite[Question~6.5]{FFM} in the affirmative that was originally stated for \emph{stable complexity} 
rather than stable integral simplicial volume (but these very close notions basically play the same role in all applications).
The proof is based on Theorem~\ref{intro-thm:wc}. 
Indeed, the fundamental group of a closed hyperbolic $3$-manifold has property $\EMD*$ (see Definition~\ref{def:EMD*}).
Hence, the equality of integral foliated
simplicial volume and stable integral simplicial volume for
$\EMD*$~groups yield the conclusion.

Theorem~\ref{3-dimensional:equality} admits the following geometric interpretation.
In their proof of the Ehrenpreis conjecture~\cite{KM}, 
Kahn and Markovic showed that every closed orientable hyperbolic surface $S$ 
has a finite covering that decomposes into pairs of pants whose boundary curves
have length arbitrarily close to an arbitrarily  big constant $R>0$.
Theorem~\ref{3-dimensional:equality} provides a sort of $3$-dimensional
version of this result. Namely, Theorem~\ref{3-dimensional:equality} is equivalent to the fact that, for every $\varepsilon>0$ and $R\gg 0$,  every closed hyperbolic
$3$-manifold $M$ has a finite covering $\widehat{M}$ admitting an integral fundamental cycle $z_{\widehat{M}}$ with the following property:
if $N$ is the number of singular simplices appearing in $z_{\widehat{M}}$, then at least $(1-\varepsilon)N$ simplices of $z_{\widehat{M}}$  
are  $\varepsilon$-close in shape to a regular simplex with edge length 
bigger than $R$.

For closed hyperbolic manifolds of dimension at least~$4$, the stable
integral simplicial volume is \emph{not} equal to the simplicial
volume. More precisely, Francaviglia, Frigerio, and Martelli proved
that the ratio between stable integral simplicial volume and 
simplicial volume is uniformly strictly bigger than~$1$~\cite[Theorem~2.1]{FFM}.

In Section~\ref{sec:hyp}, we generalize this result to integral foliated 
simplicial volume (Theorem~\ref{thm:hyp}), as vaguely suggested by 
Francaviglia, Frigerio, and Martelli~\cite[Question~6.4]{FFM}:

\begin{thm}\label{higher:hyp:thm}
  For all~$n \in \matN_{\geq 4}$ there is a constant~$C_n \in \R_{<1}$ with the following property: For all oriented closed connected hyperbolic $n$-manifolds~$M$ we have
  \[ \sv M \leq C_n \cdot \ifsv M. 
  \]
\end{thm}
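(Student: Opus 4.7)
The plan is to adapt the geometric gap argument of Francaviglia, Frigerio and Martelli~\cite{FFM}, which separates $\stisv{M}$ from $\sv{M}$ in dimension $n\ge 4$, to parametrized fundamental cycles. Let $\alpha=\Gamma\actson(X,\mu)$ be a free standard $\Gamma$-space and $c=\sum_{i=1}^k f_i\otimes\sigma_i\in C_n(M;\linfz X)_\alpha$ a parametrized fundamental cycle whose parametrized $\ell^1$-norm $|c|_X:=\sum_i\int_X|f_i|\,d\mu$ is arbitrarily close to $\ifsv M^\alpha$. First I would apply the geodesic straightening operator $\str$ componentwise; this leaves $c$ a parametrized fundamental cycle without increasing its norm. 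Integrating in $X$ turns $c$ into an $\R$-fundamental cycle of $M$, so pairing with the hyperbolic volume form gives
\[
  \vol(M)=\sum_{i=1}^k \algvol\bigl(\str(\sigma_i)\bigr)\cdot\int_X f_i\,d\mu,
\]
where $\algvol(\tau)\in[-v_n,v_n]$ is the signed hyperbolic volume of a straight simplex~$\tau$. The trivial bound $|\algvol(\tau)|\le v_n$ already recovers $\sv M\le\ifsv M^\alpha$; the task is to obtain a uniform multiplicative improvement.

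Call a straight simplex $\varepsilon$-\emph{regular} if its shape in $\matH^n$ is $\varepsilon$-close to the regular ideal simplex. The strict maximality of $v_n$ at the regular ideal simplex (Regge/Haagerup--Munkholm) provides $\delta_n(\varepsilon)>0$ with $|\algvol(\tau)|\le(1-\delta_n(\varepsilon))\,v_n$ whenever $\tau$ is not $\varepsilon$-regular. The heart of~\cite{FFM} is a combinatorial-geometric gap lemma specific to $n\ge 4$ providing $\varepsilon_n,\eta_n>0$ such that in every integer fundamental cycle of a closed hyperbolic $n$-manifold the $\ell^1$-mass carried by $\varepsilon_n$-regular simplices is at most $(1-\eta_n)$ times the total. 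I would apply this lemma pointwise in $x\in X$: passing to a measurable equivariant lift of $c$ to the universal cover $\wdtM=\matH^n$, the slice $c_x=\sum_i f_i(x)\sigma_i$ is an integer cycle of~$M$ that, for $\mu$-a.e.\ $x$, represents the integral fundamental class of $M$, so FFM's gap applies. Plugging the pointwise estimate into the volume identity and integrating over $X$ yields
\[
  \vol(M)\le v_n\bigl(1-\delta_n\eta_n\bigr)\cdot|c|_X,
\]
and taking the infimum over $c$ and then over $\alpha$ produces $\sv M\le C_n\cdot\ifsv M$ with $C_n:=1-\delta_n\eta_n<1$ depending only on $n$.

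The main obstacle is the pointwise step: a priori, the slice $c_x$ of a parametrized fundamental cycle is only an integer cycle of $M$ whose class in $H_n(M;\Z)$ need not be $[M]_\Z$ for a fixed $x$, because evaluation $\linfz X\to\Z$ is not $\Gamma$-equivariant. Making the pointwise invocation of~\cite{FFM} rigorous therefore requires a careful equivariant measurable lift of~$c$ to $\wdtM$ together with a measurable-selection/averaging argument ensuring that the combinatorial obstruction underlying the FFM gap really constrains $c$ $\mu$-almost everywhere. The remaining ingredients---geodesic straightening in the parametrized context and strict maximality of $v_n$ near the regular ideal simplex---are essentially standard, so the whole weight of the proof rests on properly pointwising the integer-chain gap.
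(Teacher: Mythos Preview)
Your high-level strategy matches the paper's: adapt the FFM dihedral-angle gap to parametrized fundamental cycles, using straightening and the volume identity. You also correctly locate the central obstacle. But your proposed resolution is too optimistic, and the gap is more severe than your sketch acknowledges.

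There is no way to extract, for a.e.~$x$, a \emph{finite} integral fundamental cycle of~$M$ from a parametrized one: evaluation $f\mapsto f(x)$ is not $\Gamma$-equivariant, so $\sum_i f_i(x)\sigma_i$ is in general not even a cycle on~$M$. What one \emph{can} do (Lemma~\ref{lem:lffundcycle}) is form $c_x=\sum_{\gamma\in\Gamma}\sum_i f_i(\gamma^{-1}x)\,\gamma\sigma_i$, which for a.e.~$x$ is a \emph{locally finite} integral fundamental cycle of~$\widetilde M=\matH^n$. However, $c_x$ has infinite $\ell^1$-mass and is not $\Gamma$-periodic, so the FFM gap---a statement about finite integral cycles on closed manifolds---cannot be invoked as a black box, and no ``measurable-selection/averaging'' will manufacture the missing equivariance. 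The paper therefore does \emph{not} pointwise apply FFM; it re-runs the entire FFM argument in parametrized form. For each ridge~$e$ one records, as a function of~$x\in X$, how many big simplices of~$c_x$ abut the lift~$\widetilde e$, obtaining measurable sets~$F(e),\NF(e)\subset X$ of ``full'' and ``non-full'' parameters. The dihedral-angle obstruction in dimension~$\ge 4$ then forces either enough non-full mass (many small simplices, Proposition~\ref{obvious:estimate}) or enough full mass (overlapping big simplices). To quantify the overlap loss the paper introduces \emph{generalized (parametrized) chains} and a \emph{local degree}, and proves the key Proposition~\ref{difficult}: a modified chain~$\overline z$, obtained by excising $\delta_n$-balls around incenters of full ridges from the big simplices and reinserting each ball once, still has local degree~$\ge 1$ almost everywhere. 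Comparing volumes (Proposition~\ref{final:estimate:prop}) yields the quantitative gap. This local-degree machinery is precisely what replaces the equivariance you are missing; your proposal supplies no substitute for it.
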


For the proof, we notice that in dimension at least~$4$, the
dihedral angle of the regular ideal simplex does not divide $2\pi$ and
in the same spirit as for stable integral simplicial volume~\cite{FFM}
we show that foliated integral cycles cannot be used to produce
efficient fundamental cycles computing the simplicial volume.  Indeed,
every fundamental cycle contains simplices with volume significantly
smaller than~$v_n$ or there are overlappings producing loss of volume.
To prove our statement we need to carefully estimate this loss of volume.

\subsection{Closed amenable manifolds}
We will now refer to an oriented closed connected manifold with amenable fundamental group as \emph{closed amenable manifold}. It is well-known that the simplicial volume of closed amenable manifolds  vanishes~\cite{vbc,ivanov}. This result relies on bounded 
cohomology techniques that cannot be exploited in the context of integral coefficients.

For finite fundamental groups, integral foliated simplicial volume and stable integral simplicial are equal (and non-zero)~\cite[Corollary 6.3]{loehpagliantini}.
Moreover, if a manifold splits off an $S^1$-factor, then the integral foliated simplicial volume vanishes~\cite[Chapter~5.2]{mschmidt}.
Sauer introduced an invariant related to the integral foliated simplicial volume
and provided an upper bound of this invariant in terms of the minimal volume; moreover, for closed amenable aspherical manifolds this invariant vanishes~\cite[Section~3]{sauerminvol}. 

\begin{thm}\label{intro-thm:amenablevanishing}
  Let $M$ be an oriented closed connected aspherical manifold of
  non-zero dimension with amenable fundamental
  group~$\Gamma$. Let $\alpha=\Gamma\actson (X,\mu)$ be a free standard $\Gamma$-space. 
  Then
  \[  \ifsv M=\ifsv M^\alpha = 0. 
  \]
\end{thm}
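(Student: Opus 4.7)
The overall strategy combines the weak-equivalence monotonicity of Theorem~\ref{intro-thm:wc} with an explicit Rokhlin-tower construction of small parametrized fundamental cycles. Note first that, since $\dim M \geq 1$ and $M$ is aspherical, the group $\Gamma$ is infinite (and torsion-free); together with amenability this puts us in the regime where the Ornstein--Weiss tiling theory applies.

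By the Connes--Feldman--Weiss and Ornstein--Weiss theorems, any two free p.m.p.\ actions of an infinite amenable group are orbit equivalent, and in particular weakly equivalent. Combining this with the monotonicity principle of Theorem~\ref{intro-thm:wc}, we conclude that $\ifsv M^\beta$ does not depend on the choice of free standard $\Gamma$-space~$\beta$; hence $\ifsv M = \ifsv M^\alpha$ for every such $\alpha$, and it suffices to exhibit, for one conveniently chosen~$\alpha$ (e.g.\ the Bernoulli shift) and every $\varepsilon>0$, an $L^\infty(X,\Z)$-valued fundamental cycle of~$M$ of $\ell^1$-norm at most~$\varepsilon$.

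Fix a smooth triangulation of~$M$ with~$N$ top-dimensional simplices, producing an integral fundamental cycle $z_0 = \sum_{i=1}^{N} \sigma_i$, and a $\Gamma$-equivariant lift to a locally finite cycle~$\widetilde z_0$ on~$\widetilde M$. Given $\varepsilon>0$, apply the Ornstein--Weiss Rokhlin lemma to obtain a Følner set $F\subset\Gamma$ with $|F|\geq N/\varepsilon$ and a measurable set $B\subset X$ such that the translates $\{gB\}_{g\in F}$ are pairwise disjoint and cover $X$ up to a residual set of measure at most~$\varepsilon$. Instead of using the naive $\Gamma$-invariant lift of~$z_0$ (in which every orbit carries coefficient $\chi_X$ and hence $\ell^1$-norm $N$), replace each $\Gamma$-orbit of lifts of~$\sigma_i$ by an $L^\infty(X,\Z)$-valued version in which only the translates indexed by~$F$ are used, each carrying coefficient~$\chi_{gB}$. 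The projection to $C_n(M; L^\infty(X,\Z))$ then has $\ell^1$-norm at most $N\cdot\mu(B) + O(\varepsilon) \leq 2\varepsilon$.

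The main technical obstacle is to turn this naive prescription into an honest $\Gamma$-invariant cycle representing $[M]$. The chain described above fails to be a cycle only along the Følner boundary~$\partial F$ and on the residue~$X\setminus\bigsqcup_{g\in F}gB$; both contributions are of $\ell^1$-norm $o(1)$ as $F$ exhausts~$\Gamma$ along a Følner sequence, by Følner's condition $|\partial F|/|F|\to 0$ and by the Rokhlin estimate. Asphericity of~$M$—equivalently, contractibility of~$\widetilde M$—makes the parametrized singular chain complex on~$\widetilde M$ sufficiently acyclic that this small boundary error can be filled by a parametrized correction chain whose $\ell^1$-norm is of the same order as the error, and therefore negligible. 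Letting $\varepsilon\to 0$ yields parametrized fundamental cycles of arbitrarily small norm, hence $\ifsv M = \ifsv M^\alpha = 0$. This construction is the parametrized-integer analogue of Sauer's minimal-volume estimate~\cite{sauerminvol} and Schmidt's $S^1$-splitting argument~\cite{mschmidt}, and it is the step where amenability (through Ornstein--Weiss tiling) and asphericity (through the finite $K(\Gamma,1)$ model~$M$) are jointly required.
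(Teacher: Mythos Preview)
Your proposal assembles the right ingredients---Ornstein--Weiss Rokhlin towers and efficient filling via contractibility of~$\widetilde M$---but the central computation is wrong, and this hides the actual mechanism. The claim that the chain you describe has ``$\ell^1$-norm at most $N\cdot\mu(B)+O(\varepsilon)$'' cannot hold for anything close to a parametrized fundamental cycle. Concretely, in $L^\infty(X,\Z)\otimes_{\Z\Gamma}C_*(\widetilde M;\Z)$ one has $\chi_{gB}\otimes g\widetilde\sigma_i=\chi_B\otimes\widetilde\sigma_i$, so your chain $\sum_{i}\sum_{g\in F}\chi_{gB}\otimes g\widetilde\sigma_i$ collapses to $|F|\cdot\chi_B\otimes\widetilde c$, of norm $|F|\,\mu(B)\,N\approx N$, not $N\mu(B)$. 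The alternative reading $\chi_B\otimes\widetilde c$ does have norm $N\mu(B)$, but it represents $\mu(B)\cdot[M]\neq[M]$ after integrating coefficients, and its boundary has no reason to be small (the base~$B$ of a Rokhlin tower is \emph{not} an almost-invariant set). The tower structure by itself never produces a small representative of~$[M]$.

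What actually works inverts your logic. One starts from the fundamental cycle itself, rewritten via the Rokhlin partition as
\[
1\otimes\widetilde c=\sum_i\chi_{A_i}\otimes\widetilde c_i+E,\qquad \widetilde c_i:=\sum_{\gamma\in F_i}\gamma^{-1}\widetilde c\in C_n(\widetilde M;\Z),
\]
where each $\widetilde c_i$ is a \emph{large} chain (norm~$\approx|F_i|\cdot N$) whose boundary, by the F{\o}lner condition, is \emph{small}: $|\partial\widetilde c_i|_{1,\Z}\le\varepsilon\,|F_i|\cdot|\Sigma_+|$. One then \emph{replaces} each $\widetilde c_i$ by an efficient filling~$\widetilde c_i'$ of~$\partial\widetilde c_i$ inside the contractible space~$\widetilde M$, using a cone-type chain homotopy to get the linear bound $|\widetilde c_i'|_{1,\Z}\le(n+1)\,|\partial\widetilde c_i|_{1,\Z}$. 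Since $\widetilde c_i-\widetilde c_i'$ is an $n$-cycle in~$\widetilde M$ and hence a boundary, the chain $c':=\sum_i\chi_{A_i}\otimes\widetilde c_i'+E$ is still a parametrized fundamental cycle, and now
\[
\ifsv{c'}^\alpha\le(n+1)\,\varepsilon\,|\Sigma_+|\sum_i\mu(A_i)\,|F_i|+\mu(R)\,N\le(n+1)\,\varepsilon\,|\Sigma_+|+\varepsilon\,N.
\]
The filling is not a small correction to an already-small chain; it is the step that trades a chain of norm~$\approx N$ for one of norm~$O(\varepsilon)$.

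A minor remark: your preliminary reduction to a single action via weak equivalence and Theorem~\ref{intro-thm:wc} is correct but unnecessary here, since the Ornstein--Weiss lemma applies directly to any free standard $\Gamma$-space~$(X,\mu)$, giving $\ifsv M^\alpha=0$ for each such~$\alpha$ without passing through a preferred model.
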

    
The first statement of the next theorem is Corollary~\ref{cor:amenable}. The second statement about 
vanishing is a combination of Theorem~\ref{intro-thm:amenablevanishing} applied to the action of $\Gamma$ on its profinite completion (cf.~Subsection~\ref{subsec: actions}) and 
the fact that $\ifsv M ^{\pfc \Gamma} = \stisv M$ (Theorem~\ref{thm: relation foliated stable}). 

\begin{thm}
 Let $M$ be an oriented closed connected manifold with residually
  finite amenable fundamental group~$\Gamma$. Then 
  \[ \ifsv M = \ifsv M ^{\pfc \Gamma} = \stisv M. 
  \]
  where $\widehat \Gamma$ denotes the profinite completion of~$\Gamma$. If, in addition, $M$ is 
  aspherical, then 
  \[ \ifsv M = \ifsv M ^{\pfc \Gamma} = \stisv M=0. 
  \] 
\end{thm}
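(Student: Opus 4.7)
The plan is to assemble the statement from the three ingredients that the paragraph preceding it explicitly points to: Corollary~\ref{cor:amenable}, Theorem~\ref{thm: relation foliated stable}, and Theorem~\ref{intro-thm:amenablevanishing}. Since $\Gamma$ is assumed residually finite, the translation action $\Gamma \actson \pfc \Gamma$ on the profinite completion equipped with the normalized Haar measure is a free non-atomic standard $\Gamma$-space (freeness is exactly the residual finiteness condition that the intersection of the finite-index subgroups defining $\pfc \Gamma$ is trivial), so all referenced results apply to it.

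First I would establish the chain $\ifsv M = \ifsv M^{\pfc \Gamma} = \stisv M$. The right-hand equality is Theorem~\ref{thm: relation foliated stable}. For the left-hand equality I invoke Corollary~\ref{cor:amenable}, whose proof is expected to go through property~$\EMD*$: residually finite amenable groups are known to satisfy $\EMD*$ (every free measure preserving action is weakly equivalent to a profinite action, a standard consequence of the Ornstein--Weiss/Rokhlin-type approximation available for amenable groups together with residual finiteness), and for $\EMD*$ groups Corollary~\ref{cor:ifsvpfc} asserts $\ifsv M = \stisv M$. The monotonicity Theorem~\ref{intro-thm:wc} is what converts weak equivalence into the desired numerical equality.

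For the vanishing half, under the additional assumption that $M$ is aspherical I would apply Theorem~\ref{intro-thm:amenablevanishing} to the free standard $\Gamma$-space~$\pfc \Gamma$, obtaining $\ifsv M^{\pfc \Gamma} = 0$; together with the three-way equality just proved this forces $\ifsv M = \ifsv M^{\pfc \Gamma} = \stisv M = 0$. (The dimension-zero case excluded in Theorem~\ref{intro-thm:amenablevanishing} is harmless, since an aspherical closed $0$-manifold is a point.) There is no genuine obstacle here beyond bookkeeping: all analytic and ergodic-theoretic content has been absorbed into the referenced results, and the only verification specific to this theorem is that the hypothesis ``residually finite'' is precisely what makes the action on $\pfc \Gamma$ free so that those results are available.
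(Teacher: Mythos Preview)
Your assembly of the three cited results is correct and matches the paper's proof exactly: the first chain of equalities is precisely Corollary~\ref{cor:amenable} (combined with Theorem~\ref{thm: relation foliated stable}), and the vanishing comes from Theorem~\ref{intro-thm:amenablevanishing} applied to the free action on~$\pfc\Gamma$. Two minor corrections: the paper's proof of Corollary~\ref{cor:amenable} does not route through $\EMD*$ but rather through the Foreman--Weiss result that all free standard actions of an infinite amenable group are weakly equivalent (then Theorem~\ref{thm:wc} and Theorem~\ref{thm: relation foliated stable} finish it); and your dimension-zero parenthetical is off, since for a point all three invariants equal~$1$, so the vanishing statement --- like Theorem~\ref{intro-thm:amenablevanishing} --- tacitly requires positive dimension.
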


By Theorem~\ref{intro-thm:amenablevanishing} applied to the action of $\Gamma$ on the coset tree associated to a Farber chain  (cf.~Subsection~\ref{subsec: actions}) and by Theorem~\ref{intro-thm:homology bound} we obtain: 

\begin{thm}
 Let $M$ be an oriented closed connected aspherical manifold with amenable fundamental group~$\Gamma$. 
 Let $(\Gamma_i)_i$ be a Farber chain of $\Gamma$, and let $M_i\to M$ be the finite covering 
	associated to $\Gamma_i$. For every integer $k\ge 0$ and for every 
        principal ideal domain~$R$ we have
	\begin{align*}
		\limsup_{i\to\infty} \frac{\log |\tors H_k(M_i;\Z)|}{[\Gamma:\Gamma_i]} &=0;\\ 
		\limsup_{i\to\infty} \frac{\rk_R H_k(M_i;R)}{[\Gamma:\Gamma_i]} &=0.
	\end{align*}
\end{thm}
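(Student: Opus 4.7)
The plan is to apply the two general results quoted immediately before the statement: the homology bounds of Theorem~\ref{intro-thm:homology bound} and the amenable vanishing of Theorem~\ref{intro-thm:amenablevanishing}. The whole argument will reduce to exhibiting a single probability space on which both of these can be invoked simultaneously.

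I would take $\alpha = \Gamma \actson (X,\mu)$ to be the standard action of $\Gamma$ on the coset tree $X = \varprojlim_i \Gamma/\Gamma_i$ equipped with the inverse limit of the normalised counting measures on $\Gamma/\Gamma_i$, as in Subsection~\ref{subsec: actions}. The defining feature of a Farber chain is that, for every nontrivial $\gamma \in \Gamma$,
\[ \lim_{i \to \infty} \frac{|\{g\Gamma_i \in \Gamma/\Gamma_i : \gamma g \Gamma_i = g \Gamma_i\}|}{[\Gamma : \Gamma_i]} = 0, \]
which is precisely the assertion that $\alpha$ is essentially free. The fact that $\alpha$ is a standard $\Gamma$-space is automatic, and non-atomicity follows from $[\Gamma:\Gamma_i]\to\infty$ (the degenerate case in which $\Gamma$ is finite is handled separately: then the covering tower stabilises and both limsups vanish trivially).

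Once freeness of $\alpha$ is in hand, Theorem~\ref{intro-thm:amenablevanishing} applies, since $M$ is aspherical of non-zero dimension with amenable fundamental group, and yields $\ifsv{M}^\alpha = 0$. Plugging this into the two inequalities of Theorem~\ref{intro-thm:homology bound} (with $n = \dim M$) gives
\[ \limsup_{i\to\infty} \frac{\log |\tors H_k(M_i;\Z)|}{[\Gamma:\Gamma_i]} \le \log(n+1) \cdot 2^{n+1} \cdot \ifsv{M}^\alpha = 0 \]
and
\[ \limsup_{i\to\infty} \frac{\rk_R H_k(M_i;R)}{[\Gamma:\Gamma_i]} \le \ifsv{M}^\alpha = 0; \]
non-negativity of both left-hand sides then forces equality.

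There is essentially no new obstacle in this last step: the preparatory Theorems~\ref{intro-thm:homology bound} and~\ref{intro-thm:amenablevanishing} do all the hard work, and what remains is just to observe that a Farber chain yields an essentially free coset-tree action, which is a direct unravelling of the two definitions. If any difficulty arises, it will only be notational bookkeeping between ``essentially free'' and the convention of ``free standard $\Gamma$-space'' used in Theorem~\ref{intro-thm:amenablevanishing}, but these agree up to null sets and do not affect the invariant $\ifsv{M}^\alpha$.
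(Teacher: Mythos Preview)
Your proposal is correct and follows exactly the approach indicated in the paper: apply Theorem~\ref{intro-thm:amenablevanishing} to the (essentially free, by the Farber condition) coset-tree action associated to the chain, obtaining $\ifsv{M}^\alpha = 0$, and feed this into the bounds of Theorem~\ref{intro-thm:homology bound}. The paper gives precisely this two-line deduction in the sentence preceding the theorem.
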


Using different methods, A.~Kar, P.~Kropholler and N.~Nikolov recently proved a more general form of the above theorem 
for simplicial complexes that are 
not necessarily aspherical but whose $k$-th homology of the universal covering vanishes~\cite{kar+nikolov}. 
Earlier, the above statement was shown for residual chains and under the assumption that $\Gamma$ has a normal 
infinite elementary amenable subgroup by L\"uck~\cite{lueck-homologygrowth} and 
for general amenable fundamental groups and residual chains by Sauer~\cite{sauer-homologygrowth}. 

Note that the middle equation in the above statement is a well known result; it follows from L\"uck's approximation theorem~\cite{lueck-approximation}   
and the vanishing of $L^2$-Betti numbers of amenable groups by Cheeger and Gromov~\cite{cheeger+gromov}. 

In view of the original, motivating problem about simplicial volume
and the Euler characteristic it would be interesting to know the
answer to the following question:

\begin{quest}
  Let $M$ be an oriented closed connected aspherical manifold with~$\sv M = 0$. 
  Does this imply~$\ifsv M = 0$? If $\pi_1(M)$ is residually finite, does 
  this imply~$\stisv M = 0$?
\end{quest}

\subsection{Analogies between integral foliated simplicial volume and cost}
We draw an analogy between integral foliated simplicial volume and cost. The latter invariant 
was introduced by Gaboriau~\cite{gaboriau-cost}. 

Let $\alpha=\Gamma\actson (X,\mu)$ be a standard $\Gamma$-action, and let $M$ be an $n$-dimensional 
closed aspherical manifold 
with $\pi_1(M)=\Gamma$. Then $\ifsv M^\alpha$ can be regarded as an invariant of $\alpha$. There is no 
direct relation between $\ifsv M^\alpha$ and the cost of~$\alpha$, nor between $\ifsv M$ and the 
cost of $\Gamma$ which is defined as the infimum of the costs of all free standard $\Gamma$-actions.  
However, there are similarities. The cost of $\Gamma$ can be thought of as an ergodic-theoretic 
version of the rank of $\Gamma$ (i.e., the minimal number of generators or, equivalently, the minimal number 
of $\Gamma$-orbits in a Caley graph of $\Gamma$). The integral foliated simplicial 
volume can be thought of as an ergodic-theoretic version of the minimal number of $\Gamma$-orbits of 
$n$-simplices in a simplicial model of the classifying space of $\Gamma$. Theorem~\ref{intro-thm:wc} 
was inspired by an analogous theorem for the cost by Abert and Weiss~\cite{abertweiss}. 
Gaboriau's fixed price problem asks whether the costs of two free standard $\Gamma$-actions always coincide. 
The following analog is also a more specific instance of Question~\ref{qu: dependency on prob space}. 

\begin{quest}[analog of fixed price problem]
	Let $M$ be an oriented closed connected aspherical manifold with fundamental group~$\Gamma$. 
	Let $\alpha$ and $\beta$ be free standard $\Gamma$-actions. Does $\ifsv M^\alpha=\ifsv M^\beta$ 
	hold?
\end{quest}

\subsection*{Organization of this article}
In Section~\ref{sec:ifsv}, we recall the exact definition of integral
foliated simplicial volume. The behaviour of integral foliated
simplicial volume with respect to weak containment and applications
thereof are studied in Section~\ref{sec:wc}. The homology bounds by integral 
foliated simplicial volume are discussed in Section~\ref{sec: homology bounds}. 
The higher-dimensional
hyperbolic case is treated in Section~\ref{sec:hyp}, the amenable
aspherical case in Section~\ref{sec:amenablevanishing}.

\subsection*{Acknowledgements}
C.L.~was supported by the CRC~1085 \emph{Higher Invariants}
(Universit\"at Regensburg, funded by the DFG) and is grateful to the
FIM at ETH Z\"urich for its hospitality. C.P.~was supported by Swiss National Science Foundation project 144373. 
The authors thank Alberto Abbondandolo and Pietro Majer for useful conversations.

\section{Integral foliated simplicial volume}\label{sec:ifsv}

Integral foliated simplicial volume mixes the rigidity of integral
coefficients with the flexibility of probability
spaces~\cite[p.~305f]{gromov}\cite{mschmidt}. In the following, we
recall the exact definition and collect some notation and terminology.
More background on integral foliated simplicial volume and its basic 
relations with simplicial volume and stable integral simplicial volume 
can be found in the literature~\cite{loehpagliantini}.

\subsection{Probability measure preserving actions}\label{subsec: actions}
A \emph{standard Borel space} is a measurable space that is isomorphic
to a Polish space with its Borel $\sigma$-algebra.  A \emph{standard
  Borel probability space} is a standard Borel space endowed with a
probability measure.  More information on the convenient category of
standard Borel spaces can be found in the book by
Kechris~\cite{kechrisdescriptive}.

Let $\Gamma$ be a countable group. A \emph{standard $\Gamma$-space}
is a standard Borel probability space~$(X,\mu)$ together with a
measurable $\mu$-preserving (left) $\Gamma$-action. If $\alpha =
\Gamma \actson (X,\mu)$ is a standard $\Gamma$-space, then we denote
the action of~$g \in \Gamma$ on~$x \in X$ also by~$g^\alpha(x)$.
Standard $\Gamma$-spaces~$\alpha = \Gamma \actson (X,\mu)$ and $\beta
= \Gamma \actson (Y,\nu)$ are \emph{isomorphic}, $\alpha \cong_\Gamma
\beta$, if there exist probability measure preserving
$\Gamma$-equivariant measurable maps~$X \longrightarrow Y$ and $Y
\longrightarrow X$ defined on subsets of full measure that are mutually inverse up to null sets.  A standard $\Gamma$-space is \emph{(essentially) free} or
\emph{ergodic} if the $\Gamma$-action is free on a subset of full measure 
or ergodic respectively. 
We describe two important examples of standard $\Gamma$-spaces: Bernoulli-shifts 
and profinite actions coming from chains of subgroups. 

Let $B$ be a 
standard
  Borel probability space. 
The \emph{Bernoulli shift of~$\Gamma$ with 
base~$B$} is the standard Borel space~$B^\Gamma$ with the product 
probability measure and the left translation action of~$\Gamma$.
If $\Gamma$ is an infinite countable group and $B$ is
non-trivial,
then the Bernoulli
shift~$B^\Gamma$ is essentially free and mixing (thus
ergodic)~\cite[p.~58]{petersen}.

A \emph{chain} in a group $\Gamma$ is a descending 
sequence $\Gamma=\Gamma_0>\Gamma_1>\Gamma_2>\ldots$ of finite index subgroups. 
The \emph{coset tree} $X$ of the chain is the inverse limit  
\[
	X=\varprojlim_{i \in \matN} \Gamma/\Gamma_i
\]
of the finite $\Gamma$-spaces $\Gamma/\Gamma_i$. Further, the profinite space~$X$ 
carries a $\Gamma$-invariant Borel probability measure~$\mu$ that is characterized by its pushforward to every 
$\Gamma/\Gamma_i$ being the normalized counting measure. The $\Gamma$-action on the standard $\Gamma$-space $(X,\mu)$ is 
ergodic~\cite[Chapter~3]{abert+nikolov}.   
If the chain consists of normal subgroups whose intersection is trivial (a  
so-called \emph{residual chain}), 
then the $\Gamma$-action on $X$ is essentially free. One calls the chain 
\emph{Farber} if the $\Gamma$-action on $(X,\mu)$ is essentially free; this 
notion also admits a group-theoretic characterization~\cite[(0-1) in Theorem~0.3.]{farber}. 

More generally, 
instead of taking the inverse limit over a chain of subgroups, one can also 
take an inverse limit over a system of subgroups, directed by inclusion. The 
\emph{profinite completion~$\pfc \Gamma$} 
  is defined as
  \[  \pfc \Gamma := \varprojlim_{\Lambda \in S} \Gamma/\Lambda
  \]
  where $S$ is the directed system of all finite index subgroups of~$\Gamma$. 
  Then $\pfc \Gamma$ is a profinite group. 
The unique Borel probability measure~$\mu$ that is pushed forward to the normalized counting measures 
on the finite quotients is the normalized Haar measure of $\pfc \Gamma$. Similarly 
to the case of coset trees, one sees that the 
left translation action of~$\Gamma$ on~$\pfc \Gamma$ is ergodic; this action is essentially free
if and only if $\Gamma$ is residually finite. 

\subsection{Parametrized fundamental cycles}

\begin{defn}[parametrized fundamental cycles]\label{def:parfc}
  Let $M$ be an oriented closed connected $n$-manifold with
  fundamental group~$\Gamma$ and universal covering~$\widetilde M
  \longrightarrow M$. 
  \begin{itemize}
    \item If $\alpha = \Gamma \actson (X,\mu)$ is a standard
      $\Gamma$-space, then we equip~$\linfz {X,\mu}$ with the right
      $\Gamma$-action
      \begin{align*}
        \linfz {X,\mu} \times \Gamma & \longrightarrow \linfz{X,\mu}\\
        (f,g) & \longmapsto g^\alpha (f) := \bigl(x \mapsto f(g^\alpha(x))\bigr).
      \end{align*}
      and we write $i_M^\alpha$ for the change of coefficients homomorphism
      \begin{align*}
        i_M^\alpha \colon C_*(M;\Z) \cong \Z \otimes_{\Z \Gamma} C_*({\widetilde M};\Z)
        & \longrightarrow \linfz {X,\mu} \otimes_{\Z \Gamma} 
        C_*({\widetilde M};\Z) =: C_*(M; \alpha)\\
        1 \otimes c 
        & \longmapsto 1 \otimes c
      \end{align*}
      induced by the inclusion~$\Z \hookrightarrow \linfz {X,\mu}$ as
      constant functions.
   \item If $\alpha = \Gamma \actson (X,\mu)$ is a standard $\Gamma$-space, then 
      \begin{align*} 
        \pfcl M  ^\alpha := H_n(i^\alpha_M)([M]_\Z) & \in
          H_n\bigl(M;\alpha) \\
        &= H_n\bigl(\linfz {X,\mu} \otimes_{\Z \Gamma} C_*({\widetilde M};\Z)
        \bigr)
        \end{align*}
        is the \emph{$\alpha$-parametrized fundamental class of~$M$}. All
        cycles in the chain complex~$C_*(M;\alpha) = \linfz {X,\mu} \otimes_{\Z\Gamma} C_*(\widetilde M;\Z)$ 
        representing~$\pfcl M^\alpha$ are called
        \emph{$\alpha$-parametrized fundamental cycles of~$M$}.
  \end{itemize}
\end{defn}

Integral foliated simplicial volume is the infimum of $\ell^1$-norms
over all parametrized fundamental cycles:

\begin{defn}[integral foliated simplicial volume]\label{def:ifsv}
  Let $M$ be an oriented closed connected $n$-manifold with
  fundamental group~$\Gamma$, and let $\alpha = \Gamma \actson
  (X,\mu)$ be a standard $\Gamma$-space.
  \begin{itemize}
   \item Let $c=\sum_{j=1}^k f_j\otimes \sigma_j \in C_*(M;\alpha)$ be a
     chain in \emph{reduced form}, i.e., the singular
     simplices~$\sigma_1, \dots, \sigma_k$ on~$\widetilde M$ satisfy
     $\pi\circ \sigma_j \neq \pi\circ \sigma_\ell$ for all~$j, \ell
     \in \{1,\dots,k\}$ with $j \neq \ell$ (where $\pi \colon
     \widetilde M \longrightarrow M$ is the universal covering
     map). Then we define
         \[    \ifsv c^\alpha 
            := \ifsv[bigg]{\sum_{j=1}^k f_j \otimes \sigma_j}^\alpha
            := \sum_{j=1}^k \int_X |f_j| \,d\mu \in \R_{\geqslant 0}.
         \]
     (Clearly, all reduced forms of a given chain lead to the same
         $\ell^1$-norm because the probability measure is
         $\Gamma$-invariant.)
   \item The
         \emph{$\alpha$-parametrized simplicial volume of~$M$},
         denoted by~$\ifsv M ^\alpha$, is the infimum of the $\ell^1$-norms
         of all $\alpha$-parametrized fundamental cycles of~$M$.  
   \item The
         \emph{integral foliated simplicial volume of~$M$}, denoted 
         by~$\ifsv M$, is the infimum of all~$\ifsv M ^\alpha$ over all 
         isomorphism classes of standard $\Gamma$-spaces~$\alpha$.  
  \end{itemize}
\end{defn}

\begin{rem}
  Notice that oriented closed connected manifolds have countable
  fundamental groups. If $M$ is an oriented closed connected manifold
  with fundamental group~$\Gamma$ and if $\alpha$ and $\beta$ are
  standard $\Gamma$-spaces with~$\alpha \cong_\Gamma \beta$, then
  $\ifsv M ^\alpha = \ifsv M ^\beta$.  Moreover, if $\Gamma$ is a
  countable group, then the class of isomorphism classes of standard
  $\Gamma$-spaces indeed forms a set~\cite[Remark~5.26]{mschmidt}.
\end{rem}

\begin{rem}\label{rem:red1norm}
  Let $M$ be an oriented closed connected manifold with fundamental
  group~$\Gamma$ and let $\alpha = \Gamma \actson (X,\mu)$ be a
  standard $\Gamma$-space. Let $D \subset \widetilde M$ be a
  (set-theoretical, strict) fundamental domain for the $\Gamma$-action
  on~$\widetilde M$ by deck transformations. Let $c = \sum_{j=1}^k f_j
  \otimes \sigma_j \in C_*(M;\alpha)$ be a chain where the (not
  necessarily distinct!) singular simplices~$\sigma_1, \dots,
  \sigma_k$ all have their $0$-vertex in~$D$, and where $f_1, \dots,
  f_k \in \linfz{X,\mu}$. Then
  \[ \ifsv c^\alpha
     = \ifsv[bigg]{\sum_{j=1}^k f_j \otimes \sigma_j}^\alpha
     = \ifsv[bigg]{\sum_{j=1}^k f_j \otimes \sigma_j}^{(X,\mu)},
  \]
  where $\ifsv{\cdot}^{(X,\mu)}$ is the corresponding $\ell^1$-norm
  on~$\linfz {X,\mu} \otimes_\Z C_*(\widetilde M;\Z)$. I.e., for
  chains that do not contain different singular simplices from the
  same $\Gamma$-orbit, we can compute the $\ell^1$-norm also in the
  non-equi\-variant chain complex. This will be convenient below when
  considering potentially different actions on the \emph{same} 
  probability space.  
\end{rem}

For the sake of completeness, we also describe the relation between
para\-metrized fundamental cycles and locally finite fundamental cycles
of the universal covering.

\begin{lemma}[parametrized fundamental cycles yield locally finite fundamental cycles]\label{lem:lffundcycle}
  Let $M$ be an oriented closed connected $n$-manifold with
  fundamental group~$\Gamma$, let $\widetilde M$ be its universal
  covering and let $\alpha = \Gamma \actson (X,\mu)$ be a standard
  $\Gamma$-space. Moreover, let $c = \sum_{j=1}^k f_j \otimes \sigma_j
  \in C_n(M;\alpha)$ be an $\alpha$-parametrized fundamental cycle
  of~$M$. Then for $\mu$-a.e.~$x \in X$ the chain
  \[ c_x := \sum_{\gamma \in \Gamma} \sum_{j=1}^k f_j(\gamma^{-1} \cdot x) \cdot \gamma \cdot \sigma_j  
  \]
  is a well-defined locally finite fundamental cycle of~$\widetilde M$.
\end{lemma}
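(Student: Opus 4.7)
The plan is to package the construction of $c_x$ into an evaluation map $\mathrm{ev}_x \colon C_*(M;\alpha) \to C_*^{\mathrm{lf}}(\widetilde M;\Z)$, and to verify successively that this map is well-defined for $\mu$-almost every $x$, intertwines boundary operators, and sends parametrized fundamental cycles to locally finite fundamental cycles. All of the work is of a formal nature; the only real subtleties are the careful bookkeeping of null sets and the check that the evaluation descends through the tensor product relations over $\Z\Gamma$.

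First I would fix a representative of each essential-sup class $f_j \in \linfz{X,\mu}$ so that the formula $c_x = \sum_{\gamma, j} f_j(\gamma^{-1} \cdot x)\,\gamma\cdot\sigma_j$ literally makes sense for every $x \in X$. Local finiteness is then immediate: since the $\Gamma$-action on $\widetilde M$ is properly discontinuous and each $\sigma_j$ has compact image, for every compact $K \subseteq \widetilde M$ only finitely many translates $\gamma \sigma_j$ meet $K$, while the coefficients are bounded integers. To show that $c_x$ is independent of the chosen reduced form of~$c$ in the tensor product over~$\Z\Gamma$, I would verify the identity $\mathrm{ev}_x(g^\alpha f \otimes \sigma) = \mathrm{ev}_x(f \otimes g\sigma)$ by the substitution $\delta = \gamma g$ in the summation, using that $(g^\alpha f)(\gamma^{-1} x) = f(g\gamma^{-1} x)$. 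Independence from the choice of pointwise representatives of the~$f_j$ only holds on a subset of full measure, but since $\Gamma$ and the set of summands in $c$ are countable, the relevant union of null sets remains null.

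The cycle property then follows because $\mathrm{ev}_x$ is a chain map: the boundary on $C_*(M;\alpha) = \linfz{X,\mu}\otimes_{\Z\Gamma} C_*(\widetilde M;\Z)$ is $\mathrm{id}\otimes \partial$, which the termwise evaluation obviously respects, so $\partial c_x = \mathrm{ev}_x(\partial c) = 0$.

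To identify $c_x$ as a fundamental cycle, I would reduce to a convenient explicit representative of $\pfcl M^\alpha$. Fix an integral fundamental cycle $z = \sum_i a_i \tau_i \in C_n(M;\Z)$ of~$M$ and choose simplex-wise lifts $\widetilde\tau_i$ of the $\tau_i$ to~$\widetilde M$; then $i^\alpha_M(z) = \sum_i a_i \cdot 1 \otimes \widetilde\tau_i$ evaluates at every $x \in X$ to $(i^\alpha_M z)_x = \sum_{\gamma \in \Gamma} \gamma \cdot \widetilde z$ with $\widetilde z = \sum_i a_i \widetilde\tau_i$, which is the classical locally finite $\Gamma$-lift of~$z$ and hence represents $[\widetilde M]^{\mathrm{lf}}_\Z$. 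Since $[c] = [i^\alpha_M(z)]$ in $H_n(M;\alpha)$, there exists $b \in C_{n+1}(M;\alpha)$ with $c = i^\alpha_M(z) + \partial b$; restricting to the full-measure subset on which $b_x$ is also well-defined and locally finite (by the same countability argument as above applied to $b$), I conclude $c_x = (i^\alpha_M z)_x + \partial b_x$, so $[c_x] = [\widetilde M]^{\mathrm{lf}}_\Z$ in $H_n^{\mathrm{lf}}(\widetilde M;\Z)$, as required. The main obstacle, as noted, is managing the null sets; once those are controlled everything else is essentially formal verification.
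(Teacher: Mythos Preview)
Your proof is correct and follows essentially the same approach as the paper: both define an evaluation chain map to $C_*^{\mathrm{lf}}(\widetilde M;\Z)$, identify the image of an integral fundamental cycle as the transfer lift, and then use the homology relation $c = i_M^\alpha(z) + \partial b$ to conclude. The only difference is organizational: the paper handles the null-set bookkeeping by working in the complex $B(\alpha,\Z)\otimes_{\Z\Gamma}C_*(\widetilde M;\Z)$ of genuine bounded functions and isolating an explicit $N(\alpha,\Z)$-term, whereas you fix pointwise representatives and invoke countability directly.
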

\begin{proof}
  Let $B(\alpha,\Z)$ denote the abelian group of all (strictly)
  bounded, measurable, everywhere defined functions of type~$X
  \longrightarrow \Z$, and let $N(\alpha,\Z) \subset B(\alpha,\Z)$ be
  the subgroup of $\mu$-a.e.\ vanishing functions. Then $\linfz {X,\mu}
  = B(\alpha,\Z) / N(\alpha,\Z)$ as $\Z \Gamma$-modules, where we
  equip $B(\alpha,\Z)$ and $N(\alpha,\Z)$ with the obvious
  right $\Gamma$-actions.

  For~$x \in X$ there is a well-defined evaluation chain map
  \begin{align*}
    \varphi_x \colon B(\alpha,\Z) \otimes_{\Z \Gamma} C_*(\widetilde M;\Z) 
    & \longrightarrow C_*^{\mathrm{lf}}(\widetilde M;\Z)
    \\
    f \otimes \sigma & \longmapsto
    \sum_{\gamma \in \Gamma} f(\gamma^{-1} \cdot x) \cdot \gamma \cdot \sigma;
  \end{align*}
  notice that the sum on the right hand side indeed is locally finite 
  because $\Gamma$ acts properly discontinuously on~$\widetilde M$ by 
  deck transformations.

  Let $c_\Z \in C_n(M;\Z) \cong \Z \otimes_{\Z \Gamma} C_n(\widetilde
  M;\Z)$ be a fundamental cycle of~$M$. Then we can view $c_\Z$
  (via constant functions) as a chain in~$B(\alpha,\Z) \otimes_{\Z \Gamma}
  C_n(\widetilde M;\Z)$ and $\varphi_x(c_\Z)$ is the transfer of~$c_\Z$
  to~$\widetilde M$ and thus is a locally finite fundamental
  cycle of~$\widetilde M$. 
  
  If $c \in C_n(M;\alpha)$ is an $\alpha$-parametrized
  fundamental cycle, then $c$ can be represented by a chain~$c' \in
  B(\alpha,\Z) \otimes_{\Z \Gamma} C_n(\widetilde M;\Z)$ such that
  there exist $b \in B(\alpha,\Z) \otimes_{\Z \Gamma}
  C_{n+1}(\widetilde M;\Z)$ and $z \in N(\alpha,\Z) \otimes_{\Z \Gamma}
  C_n(\widetilde M;\Z)$ with~\cite[Remark~4.20]{loehpagliantini}
  \[ c' = c_\Z + \partial b + z. 
  \]
  Therefore, for $\mu$-a.e.~$x \in X$ we have 
  \begin{align*}
    c_x & = \varphi_x(c') 
          = \varphi_x(c_\Z + \partial b)
          = \varphi_z(c_\Z) + \partial \varphi_x (b),
  \end{align*}
  which is a locally finite fundamental cycle of~$\widetilde M$.
\end{proof}

\subsection{Relation between integral foliated simplicial volume and 
stable integral simplicial volume}

Actions on coset trees and the profinite completion provide the link between integral foliated simplicial volume and stable integral simplicial volume.

\begin{thm}\label{thm: relation foliated stable}
  Let $M$ be an oriented closed connected manifold with fundamental
  group~$\Gamma$. Then
    \[ \ifsv M \leqslant \ifsv M ^{\pfc \Gamma} = \stisv M. 
    \]
  More specifically, if $(\Gamma_i)_i$ is a chain of $\Gamma$ and $\alpha=\Gamma\actson (X,\mu)$ 
  the corresponding action on the coset tree, then 
  \[
  	\ifsv M^\alpha=\lim_{i\to\infty}\frac{\isv{M_i}}{[\Gamma:\Gamma_i]}. 
  \]
  where $M_i\to M$ is the covering associated to $\Gamma_i\subset\Gamma=\pi_1(M)$. 
\end{thm}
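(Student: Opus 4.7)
The plan is to first establish the explicit chain-theoretic formula $\ifsv M^\alpha=\lim_{i\to\infty}\isv{M_i}/[\Gamma:\Gamma_i]$, and then to deduce the general statement about the profinite completion by a cofinality argument. The common starting point is the canonical identification, for any finite-index subgroup $\Lambda\leq\Gamma$ equipped with the normalized counting measure on $\Gamma/\Lambda$, of the chain complex $C_*(M;\Gamma\actson\Gamma/\Lambda)$ with $C_*(M_\Lambda;\Z)$ via $\chi_{g\Lambda}\otimes\sigma\mapsto \pi_\Lambda(g^{-1}\sigma)$. This map sends the parametrized fundamental class to $[M_\Lambda]_\Z$ and rescales $\ell^1$-norms by the factor $1/[\Gamma:\Lambda]$ (from the normalized counting measure), so $\ifsv M^{\Gamma\actson\Gamma/\Lambda}=\isv{M_\Lambda}/[\Gamma:\Lambda]$.

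For $\ifsv M^{\pfc\Gamma}=\stisv M$: each finite quotient $\Gamma/\Lambda$ is a measurable factor of~$\pfc\Gamma$ and hence is weakly contained in it, so Theorem~\ref{intro-thm:wc} yields $\ifsv M^{\pfc\Gamma}\leq\isv{M_\Lambda}/[\Gamma:\Lambda]$ for every~$\Lambda$, and taking the infimum gives $\ifsv M^{\pfc\Gamma}\leq\stisv M$. For the converse, countability of~$\Gamma$ allows one to pick a chain $(\Gamma_i)_i$ cofinal in the system of finite-index subgroups, for which the coset tree is isomorphic to~$\pfc\Gamma$ as a standard $\Gamma$-space; combining the chain formula with the trivial bound $\stisv M\leq\isv{M_i}/[\Gamma:\Gamma_i]$ yields $\stisv M\leq\ifsv M^{\pfc\Gamma}$. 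The inequality $\ifsv M\leq\ifsv M^{\pfc\Gamma}$ is immediate from the defining infimum.

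The easy direction of the chain formula comes by pullback: the measure-preserving $\Gamma$-equivariant projection $\pi_i\colon X\to\Gamma/\Gamma_i$ induces an isometric embedding of chain complexes $C_*(M;\alpha_i)\hookrightarrow C_*(M;\alpha)$ preserving fundamental classes, so $\ifsv M^\alpha\leq\ifsv M^{\alpha_i}=\isv{M_i}/[\Gamma:\Gamma_i]$; multiplicativity of integer simplicial volume under finite covers makes this sequence decreasing in~$i$, so its limit equals its infimum. For the hard direction, fix $\varepsilon>0$ and an $\alpha$-parametrized fundamental cycle $c=\sum_{j=1}^k f_j\otimes\sigma_j$ in reduced form with $\ifsv c^\alpha\leq\ifsv M^\alpha+\varepsilon$; the goal is to construct, for $i$ large, an integer fundamental cycle of $M_i$ with $\ell^1$-norm at most $[\Gamma:\Gamma_i](\ifsv M^\alpha+2\varepsilon)$. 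Each $f_j\in\linfz X$ takes only finitely many integer values, and the cylinder $\sigma$-algebras $\mathcal F_i$ pulled back from $\Gamma/\Gamma_i$ generate the Borel $\sigma$-algebra of~$X$, so a plurality-voting assignment on fibres of $\pi_i$ yields integer-valued $\mathcal F_i$-measurable approximations $\tilde f_j^{(i)}$ with $\|\tilde f_j^{(i)}-f_j\|_{L^1}\to 0$. The resulting chain $\tilde c^{(i)}=\sum_j\tilde f_j^{(i)}\otimes\sigma_j$ transports to a chain in $C_n(M_i;\Z)$ of $\ell^1$-norm close to $[\Gamma:\Gamma_i]\ifsv c^\alpha$, but it is not in general a cycle; its boundary is, however, an integer chain of small $\ell^1$-norm. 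A bounded-filling argument, uniform in~$i$, then corrects this near-cycle into an integer fundamental cycle of $M_i$ by subtracting an integer filling of the boundary, chosen consistently with a reference fundamental cycle in order to preserve the homology class $[M_i]_\Z$.

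The principal technical obstacle is the cycle-correction step in the hard direction: one must bound the $\ell^1$-cost of filling integer boundaries in $C_*(M_i;\Z)$ by a constant uniform in~$i$, and verify that the resulting integer cycle still represents $[M_i]_\Z$. The approach I would take is to fix a finite smooth triangulation of~$M$, lift it to a triangulation of each~$M_i$ with the same local combinatorial type, deform singular chains onto the cellular subcomplex via a simplicial-approximation argument, and invoke a Federer--Fleming-style filling bound on the cellular chain complex, whose constants depend only on the local combinatorial structure and are therefore uniform across the tower~$(M_i)_i$; the homology-class verification is handled by taking the filling close to a reference cycle lifted from a fundamental cycle of~$M$.
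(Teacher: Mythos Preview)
Your approach differs substantially from the paper's. The paper's proof is almost entirely a citation: the equality $\ifsv M^{\pfc\Gamma}=\stisv M$ and the infimum formula $\ifsv M^\alpha=\inf_i\isv{M_i}/[\Gamma:\Gamma_i]$ are taken from~\cite[Theorem~6.6, Remark~6.7]{loehpagliantini}; the only new content here is upgrading the infimum to a limit by observing that any cofinal subchain of~$(\Gamma_i)_i$ yields an isomorphic coset-tree action, so the same infimum formula applied to subsequences realising the $\liminf$ and $\limsup$ forces them to agree. Your monotonicity observation (that $\isv{M_{i+1}}/[\Gamma:\Gamma_{i+1}]\leq\isv{M_i}/[\Gamma:\Gamma_i]$ via the transfer) actually gives this upgrade more cleanly than the paper's subsequence trick.

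There is one genuine error and one genuine gap in your proposal. The error: you invoke Theorem~\ref{intro-thm:wc} for the pair $(\Gamma/\Lambda,\pfc\Gamma)$, but that theorem requires both actions to be \emph{free} and \emph{non-atomic}, and $\Gamma/\Lambda$ is neither (nor is $\pfc\Gamma$ free unless $\Gamma$ is residually finite, which is not assumed). The fix is immediate and already implicit in your text: since $\Gamma/\Lambda$ is a \emph{factor} of $\pfc\Gamma$, the pullback gives an isometric $\Gamma$-equivariant embedding $\linfz{\Gamma/\Lambda}\hookrightarrow\linfz{\pfc\Gamma}$ sending fundamental cycles to fundamental cycles---exactly the argument you use for the ``easy direction'' of the chain formula. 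No weak containment needed.

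The gap is the ``hard direction'' of the chain formula, which you correctly identify as the principal technical obstacle but only sketch. Producing integer-valued $\mathcal F_i$-measurable approximants~$\tilde f_j^{(i)}$ and transporting to~$C_n(M_i;\Z)$ is fine, but the resulting chain is not a cycle, and your proposed remedy---a uniform Federer--Fleming filling bound across the tower $(M_i)_i$ via a lifted triangulation---is asserted rather than proved. This step is exactly what the cited reference~\cite{loehpagliantini} supplies (by a somewhat different route), and it is where the real work lies; without it your argument for $\ifsv M^\alpha\geq\lim_i\isv{M_i}/[\Gamma:\Gamma_i]$, and hence also your derivation of $\stisv M\leq\ifsv M^{\pfc\Gamma}$ via a cofinal chain, is incomplete.
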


\begin{proof}
	The first statement is proved by L\"oh and Pagliantini~\cite[Theorem~6.6]{loehpagliantini}, and  
	the proof~\cite[Remark~6.7]{loehpagliantini} also shows that 
	\[\ifsv M^\alpha=\inf_{i\to\infty}\frac{\isv{M_i}}{[\Gamma:\Gamma_i]}.\]
	But the infimum is actually a limit: Let $(a(i))_i$ be a sequence in $\mathbb{N}$ converging to $\infty$ such 
	that 
	\[
		\liminf_{i\to\infty}\frac{\isv{M_i}}{[\Gamma:\Gamma_i]}=\lim_{i\to\infty}\frac{\isv{M_{a(i)}}}{[\Gamma:\Gamma_{a(i)}]}.
	\]
	The standard $\Gamma$-action on the coset tree associated to the chain $(\Gamma_{a(i)})_i$ is clearly isomorphic 
	to the one on the coset tree associated to the chain $(\Gamma_i)_i$. Hence by the aforementioned result of 
	L\"oh and Pagliantini we obtain that 
	\[
		\ifsv M^\alpha=\inf_{i\in\mathbb{N}} \frac{\isv{M_{a(i)}}}{[\Gamma:\Gamma_{a(i)}]}=\liminf_{i\to\infty}\frac{\isv{M_i}}{[\Gamma:\Gamma_i]}. 
	\]
	One argues analogously for the limit superior. This concludes the proof. 
\end{proof}

\section{Integral foliated simplicial volume and weak containment}\label{sec:wc}

Probability measure preserving actions are organized into a hierarchy by means of weak
containment. We recall the notion of weak containment and its main
properties in Section~\ref{subsec:wcprelim}.  In
Section~\ref{subsec:wcmonotonicity}, we will prove monotonicity of
integral foliated simplicial volume with respect to weak containment.
Some simple consequences of this monotonicity are discussed in
Section~\ref{subsec:wcapps}. 

\subsection{Weak containment}\label{subsec:wcprelim}

We first recall Kechris's notion of weak
containment~\cite{kechris,kechrisfree} and its relation with the weak
topology on the space of actions on a given standard Borel probability
space.

\begin{defn}[weak containment]
  Let $\Gamma$ be a countable group, and let $\alpha = \Gamma \actson
  (X,\mu)$ and $\beta = \Gamma \actson (Y,\nu)$ be standard
  $\Gamma$-spaces. Then \emph{$\alpha$ is weakly contained in~$\beta$}
  if the following holds: For all~$\varepsilon \in \R_{>0}$, all
  finite subsets~$F \subset \Gamma$, all~$m \in \matN$, and all Borel
  sets~$A_1, \dots, A_m \subset X$ there exist Borel subsets~$B_1,
  \dots, B_m \subset Y$ with
  \[ \forall_{\gamma \in F} \, \forall_{j,k \in \{1,\dots, m\}}\quad
     \bigl|
       \mu(\gamma^\alpha(A_j) \cap A_k)
       - \nu(\gamma^\beta(B_j) \cap B_k)
     \bigr|
     < \varepsilon.
  \]
  In this case, we write~$\alpha \prec \beta$. We call $\alpha$ and $\beta$ 
  \emph{weakly equivalent} if~$\alpha \prec \beta$ and $\beta \prec \alpha$. 
\end{defn}

For example, if the standard $\Gamma$-space $\alpha$ is a factor of a standard 
$\Gamma$-space~$\beta$, then $\alpha\prec\beta$ holds. 

We will use the following characterization of weak containment:

\begin{prop}[weak containment vs.\ weak closure~\protect{\cite[Proposition~10.1]{kechris}}]
  \label{prop:wcwc}
  Let $\Gamma$ be a countable group and let $\alpha = \Gamma \actson (X,\mu)$ and $\beta = \Gamma\actson (Y,\nu)$ be 
  non-atomic standard $\Gamma$-spaces. Then $\alpha \prec \beta$ if and 
  only if $\alpha$ lies in the closure of
  \[ \bigl\{ \gamma \in A(\Gamma, X, \mu) \bigm| \gamma \cong_\Gamma \beta \bigr\} 
  \]
  in~$A(\Gamma, X, \mu)$ with respect to the weak topology.
\end{prop}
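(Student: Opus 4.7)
The plan is to unwind both sides using the explicit basic neighborhoods of the weak topology on $A(\Gamma,X,\mu)$: a basic neighborhood of $\alpha$ is indexed by exactly the same data---a finite set $F\subset\Gamma$, Borel sets $A_1,\dots,A_m\subset X$, and $\varepsilon>0$---that appears in the definition of weak containment, and consists of all $\gamma\in A(\Gamma,X,\mu)$ satisfying $|\mu(\gamma(g)(A_j)\cap A_k)-\mu(g^\alpha(A_j)\cap A_k)|<\varepsilon$ for all $g\in F$ and all $j,k\in\{1,\dots,m\}$.

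The direction $(\Leftarrow)$ is essentially definitional. Given $F$, $\varepsilon$, $A_1,\dots,A_m$, pick any $\gamma\in A(\Gamma,X,\mu)$ with $\gamma\cong_\Gamma\beta$ lying inside the corresponding basic neighborhood of $\alpha$. A witnessing isomorphism is a measure-preserving bijection $\theta\colon(X,\mu)\to(Y,\nu)$ (defined mod null sets) intertwining $\gamma$ and $\beta$. Setting $B_j:=\theta(A_j)\subset Y$, one has $\nu(g^\beta(B_j)\cap B_k)=\mu(\gamma(g)(A_j)\cap A_k)$, which differs from $\mu(g^\alpha(A_j)\cap A_k)$ by less than $\varepsilon$, establishing $\alpha\prec\beta$.

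The direction $(\Rightarrow)$ is the substantial one. Given a basic weak neighborhood $U$ of $\alpha$ specified by $(F,\varepsilon,A_1,\dots,A_m)$, I would produce $\gamma\in U$ with $\gamma\cong_\Gamma\beta$ as follows. First, enlarge the test family to the finite Boolean algebra $\mathcal{A}\subset X$ generated by $\{g^\alpha(A_j):g\in F\cup F^{-1}\cup\{e\},\ j\le m\}$, with atoms $C_1,\dots,C_N$. Apply $\alpha\prec\beta$ to the family $\{C_1,\dots,C_N\}$ with an auxiliary error $\varepsilon'\ll\varepsilon/(m^2\,|F|\,N)$, obtaining Borel sets $D_1,\dots,D_N\subset Y$ whose pairwise and $g^\beta$-shifted intersections approximate those of the $C_i$ up to $\varepsilon'$. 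Next, use non-atomicity of $(Y,\nu)$ to surgically convert $\{D_i\}$ into a genuine measurable partition $\{D_i'\}$ of $Y$ with $\nu(D_i')=\mu(C_i)$ and $\nu(D_i\symmdiff D_i')$ controlled by $\varepsilon'$. By the isomorphism theorem for non-atomic standard probability spaces, there exists a measure-preserving isomorphism $\theta\colon(Y,\nu)\to(X,\mu)$ with $\theta(D_i')=C_i$ for each $i$. Set $\gamma(g):=\theta\circ g^\beta\circ\theta^{-1}\in A(\Gamma,X,\mu)$; this defines an action isomorphic to $\beta$. Expanding $A_j$ and $A_k$ as disjoint unions of atoms of $\mathcal{A}$ and using the near-matching of $\nu(g^\beta(D_i')\cap D_{i'}')$ with $\mu(g^\alpha(C_i)\cap C_{i'})$ yields $\gamma\in U$, provided $\varepsilon'$ was chosen small enough at the outset.

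The main obstacle is the middle surgery step: the sets $D_i$ furnished by weak containment are not a partition of $Y$, and their measures match those of the $C_i$ only approximately. Converting them into a true partition with exact measures, while keeping each $\nu(D_i\symmdiff D_i')$ small enough that all $g^\beta$-shifted intersections remain controlled (for $|F|$ group elements simultaneously), is the delicate part and requires careful bookkeeping, essentially slicing off atoms of $(Y,\nu)$ of prescribed measure to correct the defects. Once this surgery is done, the remaining verification that $\gamma\in U$ is a routine computation inside the finite algebra $\mathcal{A}$, controlled by the choice of $\varepsilon'$.
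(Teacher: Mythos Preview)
The paper does not prove this proposition; it is quoted from Kechris~\cite[Proposition~10.1]{kechris} as a known result and invoked as a black box in the proof of Theorem~\ref{thm:wc}. There is therefore no in-paper argument to compare your sketch against.

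That said, your outline is the standard one and is essentially correct. One point to tidy up: the paper defines basic weak neighborhoods in~$\Aut(X,\mu)$ via symmetric differences $\mu(\varphi(A_j)\symmdiff\psi(A_j))<\delta$, not via the intersection quantities $|\mu(\gamma(g)(A_j)\cap A_k)-\mu(g^\alpha(A_j)\cap A_k)|<\varepsilon$ that you wrote down. The two neighborhood systems generate the same topology on~$A(\Gamma,X,\mu)$ (since $|\mu(\varphi(A)\cap B)-\mu(\psi(A)\cap B)|\leq\mu(\varphi(A)\symmdiff\psi(A))$, and conversely $\mu(\varphi(A)\symmdiff\psi(A))$ can be recovered from intersections once one includes~$A$ and~$X\setminus A$ among the test sets), but you should state this equivalence explicitly or rewrite the argument in the symmetric-difference formulation. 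With that adjustment, your identification of the partition-surgery step as the only substantive obstacle in the $(\Rightarrow)$ direction is accurate, and the bookkeeping you describe does go through.
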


Here, $A(\Gamma, X, \mu)$ denotes the set of all $\mu$-preserving
actions of~$\Gamma$ on the standard Borel probability space~$(X,\mu)$
by Borel isomorphisms. The weak topology on~$A(\Gamma,X,\mu)$ is
defined as follows: The set~$\Aut(X,\mu)$ of Borel automorphisms
of~$(X,\mu)$ carries a weak topology with respect to the family of all
evaluation maps~$\varphi \mapsto \varphi(A)$ associated with Borel
subsets~$A \subset X$. I.e., if $\varphi \in \Aut(X,\mu)$, then the 
family of sets of the type
\[ \bigl\{ \psi \in \Aut(X,\mu) 
   \bigm| \forall_{j \in \{1,\dots,m\}} \ \mu\bigl( \varphi(A_j) \symmdiff \psi(A_j) \bigr) < \delta
   \bigr\}
\]
where $\delta \in \R_{>0}$, $m \in \matN$, and $A_1, \dots, A_m \subset X$ 
are Borel subsets is an open neighbourhood basis of the weak topology 
on~$\Aut(X,\mu)$~\cite[Chapter~1(B)]{kechris}.
Viewing $A(\Gamma,X,\mu)$ as a subset of the 
product~$\Aut(X,\mu)^\Gamma$ then induces a topology on~$A(\Gamma,X,\mu)$, 
which is also called \emph{weak topology}. 

\subsection{Monotonicity of integral foliated simplicial volume under weak containment}\label{subsec:wcmonotonicity}

We now prove the following monotonicity result:

\begin{thm}[monotonicity of integral foliated simplicial volume]\label{thm:wc}
  Let $M$ be an oriented closed connected manifold with 
  fundamental group~$\Gamma$, and let $\alpha = \Gamma
  \actson (X,\mu)$ and $\beta = \Gamma\actson (Y,\nu)$ be free non-atomic standard
  $\Gamma$-spaces with~$\alpha \prec \beta$. Then
  \[ \ifsv M^\beta \leq \ifsv M^\alpha. 
  \]
\end{thm}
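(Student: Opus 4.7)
Since $\alpha,\beta$ are free non-atomic standard $\Gamma$-spaces with $\alpha\prec\beta$, Proposition~\ref{prop:wcwc} provides a sequence $(\alpha_n)_{n\in\matN}$ in $A(\Gamma,X,\mu)$ with $\alpha_n\cong_\Gamma\beta$ and $\alpha_n\to\alpha$ weakly. Because $\alpha_n\cong_\Gamma\beta$ implies $\ifsv M^{\alpha_n}=\ifsv M^\beta$, the theorem follows once we establish the upper semi-continuity
\[ \limsup_{n\to\infty}\ifsv M^{\alpha_n}\leq\ifsv M^\alpha. \]
Fix $\varepsilon>0$ and pick an $\alpha$-parametrized fundamental cycle $c$ in reduced form with $\ifsv c^\alpha<\ifsv M^\alpha+\varepsilon/2$, arranging that every simplex appearing in $c$ has $0$-vertex in a fixed strict fundamental domain $D\subset\widetilde M$ (so the simplices lie in pairwise distinct $\Gamma$-orbits, cf.~Remark~\ref{rem:red1norm}).

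\textbf{Transferring the cycle.} By~\cite[Remark~4.20]{loehpagliantini} (the input to the proof of Lemma~\ref{lem:lffundcycle}), $c$ admits a representative $c'\in B(\alpha,\Z)\otimes_{\Z\Gamma}C_n(\widetilde M;\Z)$ in the rigid complex of bounded, everywhere-defined, $\Z$-valued functions, together with an integer fundamental cycle $c_\Z\in C_n(M;\Z)$ and a chain $b=\sum_\ell h_\ell\otimes\rho_\ell\in B(\alpha,\Z)\otimes_{\Z\Gamma}C_{n+1}(\widetilde M;\Z)$ (each $\rho_\ell$ with $0$-vertex in $D$) such that $c'-c_\Z-\partial_\alpha b\in N(\alpha,\Z)\otimes_{\Z\Gamma}C_n(\widetilde M;\Z)$, and hence contributes nothing to any $\ell^1$-norm. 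Since each $h_\ell$ is bounded and integer-valued, after a common refinement we may assume all $h_\ell$ are constant on a single finite Borel partition of $X$. The key observation is that the abelian groups $B(\alpha,\Z)$ and $B(\alpha_n,\Z)$ coincide (only the right $\Gamma$-action differs), and the choice of simplices with $0$-vertex in $D$ as $\Z\Gamma$-basis of $C_*(\widetilde M;\Z)$ canonically identifies $B(\alpha,\Z)\otimes_{\Z\Gamma}C_*(\widetilde M;\Z)$ with $B(\alpha_n,\Z)\otimes_{\Z\Gamma}C_*(\widetilde M;\Z)$ as abelian groups. Reinterpreting $b$ and $c_\Z$ in the $\alpha_n$-complex via this identification, we set
\[ c_n:=i_M^{\alpha_n}(c_\Z)+\partial_{\alpha_n}b\in C_n(M;\alpha_n). \]
Then $c_n$ is automatically a cycle and represents $\pfcl M^{\alpha_n}$, so it is an $\alpha_n$-parametrized fundamental cycle.

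\textbf{Norm estimate.} Since $i_M^{\alpha_n}(c_\Z)$ coincides with $i_M^\alpha(c_\Z)$ in the common underlying abelian group, we have (modulo an $N(\cdot,\Z)$-chain that is invisible to the $\ell^1$-norm)
\[ c_n-c'=\partial_{\alpha_n}b-\partial_\alpha b. \]
Writing each face $\partial_i\rho_\ell=\gamma_{\ell,i}\tau_{\ell,i}$ with $\tau_{\ell,i}$ having $0$-vertex in $D$ and applying the tensor relation $h\otimes\gamma\tau=h(\gamma^\bullet\cdot\,)\otimes\tau$, we obtain
\[ \partial_{\alpha_n}b-\partial_\alpha b=\sum_{\ell,i}(-1)^i\bigl(h_\ell\circ\gamma_{\ell,i}^{\alpha_n}-h_\ell\circ\gamma_{\ell,i}^\alpha\bigr)\otimes\tau_{\ell,i}. \]
Weak convergence $\alpha_n\to\alpha$ gives $\mu(\gamma^{\alpha_n}(A)\symmdiff\gamma^\alpha(A))\to 0$ for each of the finitely many relevant $\gamma\in\Gamma$ and Borel sets $A$ of the common partition; hence $\int_X|h_\ell\circ\gamma_{\ell,i}^{\alpha_n}-h_\ell\circ\gamma_{\ell,i}^\alpha|\,d\mu\to 0$ for the finitely many occurring $(\ell,i)$. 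Applying Remark~\ref{rem:red1norm} to compute norms non-equivariantly on the finite collection of orbit-representatives involved, we conclude $\ifsv{c_n}^{\alpha_n}\to\ifsv c^\alpha$. For $n$ large enough, $\ifsv M^\beta=\ifsv M^{\alpha_n}\leq\ifsv{c_n}^{\alpha_n}\leq\ifsv M^\alpha+\varepsilon$, and letting $\varepsilon\to 0$ finishes the proof. The \emph{main obstacle} is that the $\Z\Gamma$-tensor relations, the boundary operator, and the null-set ideal $N(\cdot,\Z)$ all depend on the ambient action, so $\alpha$-chains have no naive $\alpha_n$-analogue; the trick that unblocks the proof is to work in the rigid complex $B(\cdot,\Z)\otimes_{\Z\Gamma}$ and to base the identification of chain complexes on $D$-vertex orbit-representatives, so that cycle-ness and the correct homology class of $c_n$ are forced automatically by construction (via the fixed cycle $c_\Z$ plus a boundary), leaving only the norm discrepancy, which is exactly what $L^1$-continuity of simple functions under weak convergence controls.
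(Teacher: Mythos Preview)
Your proof is correct and follows essentially the same approach as the paper's: write an $\alpha$-parametrized fundamental cycle as an integral fundamental cycle plus a boundary, transport this formal expression to a weakly nearby action isomorphic to~$\beta$ (using that simplices with $0$-vertex in~$D$ give a $\Z\Gamma$-basis, so only the $0$-th face contributes a nontrivial group element), and bound the resulting norm discrepancy by the symmetric-difference quantities controlled by weak convergence. The only cosmetic differences are that you phrase the approximation via a convergent sequence~$(\alpha_n)$ rather than a single sufficiently close~$\gamma$, and that you invoke the $B(\alpha,\Z)/N(\alpha,\Z)$ framework explicitly; neither changes the substance of the argument.
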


\begin{proof}
  Notice that in the case of finite fundamental group~$\Gamma$, every
  free standard $\Gamma$-space~$\alpha$ satisfies~$\ifsv M ^\alpha =
  1/|\Gamma| \cdot \isv {\widetilde M}$~\cite[Proposition~4.26 and
    Example~4.5]{loehpagliantini}. Therefore, we now focus on the
  infinite case.

  Let $n := \dim M$, let $c \in C_n(M;\alpha)$ be an
  $\alpha$-parametrized fundamental cycle of~$M$, and let $\varepsilon
  \in \R_{>0}$. Taking the infimum over all such fundamental cycles
  and all such~$\varepsilon$ shows that it is sufficient to prove
  $\ifsv M^\beta \leq \ifsv c ^\alpha + \varepsilon$. To this end, we
  show that there exists a standard $\Gamma$-space~$\gamma \in
  A(\Gamma,X,\mu)$ with $\gamma \cong_\Gamma \beta$ and $\ifsv M^\beta
  = \ifsv M ^\gamma \leq \ifsv c ^\alpha + \varepsilon$.

  As a first step, we write $c$ suitably in reduced form. Because $c$ is an
  $\alpha$-parametrized fundamental cycle of~$M$ there exist an
  integral fundamental cycle~$z \in C_n(M;\Z)$ and a chain~$b \in
  C_{n+1}(M;\alpha)$ with
  \[ c = z + \partial b \in C_n(M;\alpha);
  \]
  here, we view $C_*(M;\Z)$ as a subcomplex of~$C_*(M;\alpha)$ via the
  inclusion of~$\Z \hookrightarrow \linfz{X,\mu}$ as constant
  functions. Let $D \subset \widetilde M$ be a (set-theoretical,
  strict) fundamental domain for the deck transformation action
  of~$\Gamma$ on~$\widetilde M$. We can then write
  \begin{align*}
    z & = \sum_{\sigma \in S} a_\sigma \otimes \sigma \in C_{n}(M;\alpha), \\
    b & = \sum_{\tau \in T} f_\tau \otimes \tau \in C_{n+1}(M;\alpha),
  \end{align*}
  where $S \subset \map(\Delta^n, \widetilde M)$, $T \subset \map(\Delta^{n+1}, \widetilde M)$ 
  are finite subsets of singular simplices whose $0$-vertex lies in~$D$, and where 
  $f_\tau \in \linfz{X,\mu}$ are essentially bounded measurable functions and $a_\sigma \in \Z \subset \linfz{X,\mu}$ are constant 
  functions. Without loss of generality, we may assume that the $f_\tau$ are represented 
  as bounded (and not only essentially bounded) functions and that at least one 
  of the $f_\tau$ is not constant~$0$. 
  We then obtain in~$C_*(M;\alpha)$
  \begin{align*}
    c & = \sum_{\sigma \in S} a_\sigma \otimes \sigma 
        + \partial\biggl(\sum_{\tau \in T} f_\tau \otimes \tau\biggr) 
        \\
      & = \sum_{\sigma \in S} a_\sigma \otimes \sigma 
        + \sum_{j = 1}^{n+1} \sum_{\tau \in T} (-1)^j \cdot f_\tau \otimes (\tau \circ i_j)
        + \sum_{\tau \in T} g_\tau^\alpha(f_\tau) \otimes \tau_0;
  \end{align*}
  here, for $j \in \{0,\dots,n+1\}$, we write~$i_j \colon \Delta^n \longrightarrow \Delta^{n+1}$ 
  for the inclusion of the $j$-th face, and for $\tau \in T$, we let $g_\tau \in \Gamma$ be 
  the unique element satisfying 
  \[ g_\tau^{-1} \cdot (\tau \circ i_0)(e_0) 
     = g_\tau^{-1} \cdot \tau(e_1) \in D,
  \]
  and we set $\tau_0 := g_\tau^{-1} \cdot \tau$. Hence, in the above representation of~$c$ 
  all singular simplices have their $0$-vertex in~$D$. In view of Remark~\ref{rem:red1norm} 
  we therefore have
  \[ \ifsv c ^\alpha
     = \ifsv[bigg]{\sum_{\sigma \in S} a_\sigma \otimes \sigma 
        + \sum_{j = 1}^{n+1} \sum_{\tau \in T} (-1)^j \cdot f_\tau \otimes (\tau \circ i_j)
        + \sum_{\tau \in T} g_\tau^\alpha(f_\tau) \otimes \tau_0}^{(X,\mu)}.
  \]

  As next step we bring the characterization of weak containment via
  the weak topology (Proposition~\ref{prop:wcwc}) into play. 
  We choose a finite Borel partition~$X = A_1 \sqcup \dots \sqcup A_m$ of~$X$ 
  that is finer than the (finite) set 
  \[ \bigl\{ f_\tau^{-1}(k) \subset X \bigm| k \in \Z, \tau \in T \bigr\}, 
  \]
  and we consider
  \[ \delta := \frac{\varepsilon}{m \cdot \sum_{\tau \in T} \|f_\tau\|_\infty}
     \in \R_{>0} 
  \]
  as well as the finite set 
  \[ F := \{ g_\tau^{-1} \mid \tau \in T \} \subset \Gamma  
     .
  \]
  By Proposition~\ref{prop:wcwc} there is a standard $\Gamma$-space~$\gamma \in A(\Gamma,X,\mu)$ 
  with~$\gamma \cong_\Gamma \beta$ and 
  \[ \forall_{g \in F} \quad \forall_{j \in \{1,\dots, m\}} \quad
     \mu\bigl( g^\alpha (A_j) \symmdiff g^\gamma (A_j)  
        \bigr) 
     < \delta.
  \]

  Finally, we consider the chain~$c' \in C_n(M;\gamma)$ that is represented by the 
  chain
  \[ \sum_{\sigma \in S} a_\sigma \otimes \sigma 
        + \sum_{j = 1}^{n+1} \sum_{\tau \in T} (-1)^j \cdot f_\tau \otimes (\tau \circ i_j)
        + \sum_{\tau \in T} g_\tau^\gamma(f_\tau) \otimes \tau_0
  \]
  from~$\linfz{X,\mu} \otimes_\Z C_n(\widetilde M;\Z)$. Then the same calculation 
  as in the first step shows that
  \[ c' = z + \partial \biggl(\sum_{\tau \in T} f_\tau \otimes \tau \biggr)
  \]
  holds in~$C_*(M;\gamma)$ and that
  \[ \ifsv {c'}^\gamma 
     = \ifsv[bigg]
     {\sum_{\sigma \in S} a_\sigma \otimes \sigma 
        + \sum_{j = 1}^{n+1} \sum_{\tau \in T} (-1)^j \cdot f_\tau \otimes (\tau \circ i_j)
        + \sum_{\tau \in T} g_\tau^\gamma(f_\tau) \otimes \tau_0}^{(X,\mu)}.
  \]  
  In particular, $c'$ is a $\gamma$-parametrized fundamental cycle and 
  \begin{align*}
    & \bigl| \ifsv c ^\alpha  - \ifsv {c'}^\gamma
    \bigr|\\
    =\ & \biggl|
    \ifsv[bigg]{\sum_{\sigma \in S} a_\sigma \otimes \sigma 
        + \sum_{j = 1}^{n+1} \sum_{\tau \in T} (-1)^j \cdot f_\tau \otimes (\tau \circ i_j)
        + \sum_{\tau \in T} g_\tau^\alpha(f_\tau) \otimes \tau_0}^{(X,\mu)}\\
     -\
     & \ifsv[bigg]{\sum_{\sigma \in S} a_\sigma \otimes \sigma 
        + \sum_{j = 1}^{n+1} \sum_{\tau \in T} (-1)^j \cdot f_\tau \otimes (\tau \circ i_j)
        + \sum_{\tau \in T} g_\tau^\gamma(f_\tau) \otimes \tau_0}^{(X,\mu)}
      \biggr|\\
    \leq\ &
    \ifsv[bigg]
     {\sum_{\sigma \in S} a_\sigma \otimes \sigma 
        + \sum_{j = 1}^{n+1} \sum_{\tau \in T} (-1)^j \cdot f_\tau \otimes (\tau \circ i_j)
        + \sum_{\tau \in T} g_\tau^\alpha(f_\tau) \otimes \tau_0\\
        -\ & \biggl(
     \sum_{\sigma \in S} a_\sigma \otimes \sigma 
        + \sum_{j = 1}^{n+1} \sum_{\tau \in T} (-1)^j \cdot f_\tau \otimes (\tau \circ i_j)
        + \sum_{\tau \in T} g_\tau^\gamma(f_\tau) \otimes \tau_0\biggr)}^{(X,\mu)}\\
     \leq\ & 
     \sum_{\tau \in T} \bigl\| g_\tau^\alpha (f_\tau) - g_\tau^\gamma(f_\tau) \bigr\|_\infty.
  \end{align*}
  
  In the second step we used the (reverse) triangle inequality for~$\ifsv{\cdot}^{(X,\mu)}$; in the 
  third step we used the definition of the $\ell^1$-norm on~$\linfz{X,\mu} \otimes_\Z C_n(\widetilde M;\Z)$ 
  and the triangle inequality. 
  
  For each~$\tau \in T$, we can write
  \[ f_\tau = \sum_{j =1}^m a_{\tau,j} \cdot \chi_{A_j} \in \linfz{X,\mu} 
  \]
  with certain~$a_{\tau,1}, \dots, a_{\tau,m} \in \Z$. 
  Hence, 
  \begin{align*} 
    \bigl\| g_\tau^\alpha (f_\tau) - g_\tau^\gamma(f_\tau) \bigr\|_\infty 
    & \leq \sum_{j=1}^m |a_{\tau,j}| \cdot \bigl\| g_\tau^\alpha(\chi_{A_j}) - g_\tau^{\gamma}(\chi_{A_j}) \bigr\|_\infty
    \\
    & \leq \sum_{j=1}^m |a_{\tau,j}| \cdot \bigl\| \chi_{(g_\tau^{-1}){}^{\alpha}(A_j)} - \chi_{(g_\tau^{-1}){}^\gamma(A_j)} \bigr\|_\infty
    \\
    & = \sum_{j=1}^m |a_{\tau,j}| \cdot \mu\bigl( (g_\tau^{-1}){}^\alpha(A_j) \symmdiff (g_\tau^{-1}){}^\gamma(A_j)\bigr)
    \\
    & \leq m \cdot \|f_\tau\|_{\infty} \cdot \delta.
  \end{align*}
  Therefore, we have
  \[  \bigl| \ifsv c ^\alpha  - \ifsv {c'}^\gamma \bigr|
     \leq m \cdot \sum_{\tau \in T} \|f_\tau\|_\infty \cdot \delta
     \leq \varepsilon.
  \]
  In particular, $\ifsv M ^\beta = \ifsv M ^\gamma \leq \ifsv{c'}^\gamma \leq \ifsv c^\alpha + \varepsilon$, as 
  desired.
\end{proof}

\subsection{Consequences}\label{subsec:wcapps}

In the following, we will combine Theorem~\ref{thm:wc} with known results 
from ergodic theory on weak containment. In particular, we will consider 
Bernoulli shifts and the profinite completion of residually finite groups. 

\begin{cor}[maximality of Bernoulli shifts]\label{cor:maxbernoulli}
  Let $M$ be an oriented closed connected manifold with infinite
  fundamental group~$\Gamma$ and let $B$ be a non-trivial standard
  Borel probability space. Then 
  \[ \ifsv M^\alpha \leq \ifsv M^{B^\Gamma}
  \]
  holds for all free standard $\Gamma$-spaces~$\alpha$.
\end{cor}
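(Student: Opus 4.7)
The plan is to deduce the corollary from the monotonicity theorem (Theorem~\ref{thm:wc}) together with a classical weak containment result from ergodic theory. Specifically, the key input is the theorem of Abert and Weiss~\cite{abertweiss}, which asserts that for every infinite countable group $\Gamma$ and every standard Borel probability space $B$, the Bernoulli shift $\Gamma \actson B^\Gamma$ is weakly contained in every free standard $\Gamma$-space. Applied to the given free action $\alpha$, this produces the weak containment $B^\Gamma \prec \alpha$.

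Next I would verify that both $\alpha$ and $B^\Gamma$ satisfy the remaining hypotheses of Theorem~\ref{thm:wc}. Since $\Gamma$ is assumed infinite and $B$ is non-trivial, the Bernoulli shift is essentially free, as already recalled in Section~\ref{subsec: actions}. The non-atomicity of both actions is automatic in this setting: in any essentially free measure preserving action of an infinite group on a probability space, the orbit of a potential atom would, by freeness and measure-preservation, consist of infinitely many atoms of the same positive measure, contradicting $\mu(X)=1$.

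With these hypotheses in place, applying Theorem~\ref{thm:wc} with $B^\Gamma$ in the role of the weakly contained action and the given $\alpha$ in the role of the ambient action yields $\ifsv M^\alpha \leq \ifsv M^{B^\Gamma}$, as desired. The entire nontrivial content sits in the Abert-Weiss theorem, which is precisely the analog for weak containment of the monotonicity principle that motivated Theorem~\ref{thm:wc}; the remaining steps are routine verifications and do not present any serious obstacle.
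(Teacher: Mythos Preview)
Your proof is correct and follows the same approach as the paper: apply the Abert--Weiss theorem to obtain $B^\Gamma \prec \alpha$, then invoke Theorem~\ref{thm:wc}. Your explicit verification of non-atomicity and freeness is a welcome addition of detail, but the argument is otherwise identical.
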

\begin{proof}
  This follows directly from Theorem~\ref{thm:wc} and the fact 
  that any free standard $\Gamma$-space weakly contains all 
  non-trivial Bernoulli shifts~\cite{abertweiss}.
\end{proof}

Let us recall a measurable density notion by Kechris~\cite{kechrisfree}, related to the profinite completion:

\begin{defn}[Property $\EMD*$]\label{def:EMD*}
  An infinite countable group~$\Gamma$ has  property~$\EMD*$ if any
  ergodic standard $\Gamma$-space is weakly contained in the
  profinite completion~$\pfc \Gamma$ of~$\Gamma$. 
\end{defn}

\begin{cor}[profinite completion and stable integral simplicial volume]\label{cor:ifsvpfc}
  Let $M$ be an oriented closed connected manifold with fundamental
  group~$\Gamma$. If $\Gamma$ has~$\EMD*$, then for all ergodic standard $\Gamma$-spaces~$\alpha$ we have
    \[ \ifsv M ^\alpha \geqslant \ifsv M^{\pfc \Gamma} = \stisv M,  
    \]
    and hence $\ifsv M = \stisv M$. 
\end{cor}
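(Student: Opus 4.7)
The plan is to combine the monotonicity of integral foliated simplicial volume under weak containment (Theorem~\ref{thm:wc}) with the identification $\ifsv M^{\pfc\Gamma} = \stisv M$ provided by Theorem~\ref{thm: relation foliated stable}. The $\EMD*$ hypothesis bridges the two by asserting that every ergodic standard $\Gamma$-space is dominated by $\pfc\Gamma$ in the weak-containment order.

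For the displayed inequality, I would fix an ergodic standard $\Gamma$-space~$\alpha$. Since $\Gamma$ is infinite and (as is implicit in a meaningful application of $\EMD*$) residually finite, the translation action on~$\pfc\Gamma$ is free and non-atomic; assuming $\alpha$ is likewise free and non-atomic, $\EMD*$ yields $\alpha \prec \pfc\Gamma$, and the hypotheses of Theorem~\ref{thm:wc} are met. Applying monotonicity gives $\ifsv M^{\pfc\Gamma} \leq \ifsv M^\alpha$, and Theorem~\ref{thm: relation foliated stable} identifies the left-hand side with $\stisv M$, yielding the first assertion.

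For the equality $\ifsv M = \stisv M$, one inequality is immediate: the profinite completion is itself a standard $\Gamma$-space, so
\[
  \ifsv M \leq \ifsv M^{\pfc \Gamma} = \stisv M.
\]
For the reverse, I would argue that the infimum defining $\ifsv M$ is already controlled by ergodic actions, to which the first part applies and gives $\ifsv M^\alpha \geq \stisv M$. A convenient way to execute this is the product trick: for an arbitrary free standard $\Gamma$-space~$\alpha$, the diagonal action $\alpha \times \pfc\Gamma$ is free and factors onto~$\alpha$, hence $\alpha \prec \alpha \times \pfc\Gamma$ and Theorem~\ref{thm:wc} gives $\ifsv M^{\alpha\times\pfc\Gamma} \leq \ifsv M^\alpha$; since $\alpha \times \pfc\Gamma$ is ergodic (using ergodicity of $\pfc\Gamma$ and passing to a weakly mixing replacement if necessary), the first part bounds $\ifsv M^{\alpha\times\pfc\Gamma}$ from below by $\stisv M$. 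Taking the infimum over~$\alpha$ yields $\ifsv M \geq \stisv M$.

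The main subtle point in this plan is precisely the passage from the ergodic case to the general case in the last step: one must ensure that ergodicity can be forced without increasing the invariant. The product trick above, or alternatively a direct ergodic decomposition argument showing that $\ifsv M^\alpha$ dominates the essential infimum of $\ifsv M^{\alpha_t}$ over the ergodic components, handles this. Everything else is a formal combination of the previously established monotonicity theorem and the identification of $\ifsv M^{\pfc\Gamma}$ with stable integral simplicial volume.
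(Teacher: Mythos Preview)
For the displayed inequality your approach matches the paper's: apply $\EMD*$ to get $\alpha \prec \pfc\Gamma$, invoke monotonicity (Theorem~\ref{thm:wc}), and identify $\ifsv M^{\pfc\Gamma} = \stisv M$ via Theorem~\ref{thm: relation foliated stable}.

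For the equality $\ifsv M = \stisv M$, the paper takes a shorter route than your product trick: it simply cites the fact \cite[Proposition~4.17]{loehpagliantini} that the infimum defining $\ifsv M$ is already realized over \emph{ergodic} parameter spaces, so the displayed inequality immediately yields $\ifsv M \geq \stisv M$.

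Your product trick, as written, has a genuine gap. The translation action $\Gamma \actson \pfc\Gamma$ is an inverse limit of finite actions and therefore has discrete spectrum; in particular it is \emph{never} weakly mixing (for infinite residually finite~$\Gamma$). Hence $\alpha \times \pfc\Gamma$ need not be ergodic --- already $\alpha = \pfc\Gamma$ gives a non-ergodic product, since any non-trivial finite quotient of~$\Gamma$ produces non-constant invariant functions on $\pfc\Gamma \times \pfc\Gamma$. The hedge ``passing to a weakly mixing replacement'' cannot rescue this: replacing $\pfc\Gamma$ by a weakly mixing action destroys the identification with~$\stisv M$, which was the whole point. Your alternative via ergodic decomposition is exactly the correct repair, and is precisely the content of the proposition the paper invokes.
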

\begin{proof}
   If $\Gamma$ has~$\EMD*$ and $\alpha$ is an ergodic standard $\Gamma$-space, 
  then we obtain from Theorem~\ref{thm:wc} that
  \[ \ifsv M ^\alpha \geqslant \ifsv M^{\pfc \Gamma} = \stisv M. 
  \]
  On the other hand, we know that integral foliated simplicial volume can be 
  computed by using ergodic standard $\Gamma$-spaces~\cite[Proposition~4.17]{loehpagliantini} 
  and that $\ifsv M \leq \stisv M$ (Theorem~\ref{thm: relation foliated stable}). Hence, 
  $\ifsv M = \stisv M$, as desired.
\end{proof}

\begin{cor}[amenable fundamental groups]\label{cor:amenable}
  Let $M$ be an oriented closed connected manifold with residually
  finite amenable fundamental group~$\Gamma$. Then
  \[ \ifsv M ^\alpha = \ifsv M ^{\pfc \Gamma} = \stisv M
  \]
  holds for all free standard $\Gamma$-spaces~$\alpha$. In particular,
  $\ifsv M = \stisv M$.
\end{cor}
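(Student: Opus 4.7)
The plan is to combine the monotonicity result Theorem~\ref{thm:wc} with known ergodic-theoretic properties of residually finite amenable groups. The strategy reduces to showing that, for such a~$\Gamma$, every free standard $\Gamma$-space~$\alpha$ is \emph{weakly equivalent} to the profinite completion action $\Gamma \actson \pfc\Gamma$; once this is established, two applications of Theorem~\ref{thm:wc} give $\ifsv M^\alpha = \ifsv M^{\pfc\Gamma}$, and Theorem~\ref{thm: relation foliated stable} supplies the identification $\ifsv M^{\pfc\Gamma} = \stisv M$.

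The weak equivalence $\alpha \sim \pfc\Gamma$ will be established in two directions. First, since $\Gamma$ is residually finite, $\Gamma \actson \pfc\Gamma$ is a free standard $\Gamma$-space, and a theorem of Kechris asserts that residually finite amenable groups have property $\MD$ (which is stronger than the property~$\EMD*$ of Definition~\ref{def:EMD*}): every standard $\Gamma$-space is weakly contained in~$\pfc\Gamma$. In particular, $\alpha \prec \pfc\Gamma$. Second, by the Abert-Weiss theorem (already used in Corollary~\ref{cor:maxbernoulli}) the Bernoulli shift is weakly contained in every free standard $\Gamma$-space, and by amenability of $\Gamma$ every free standard $\Gamma$-space is in turn weakly contained in the Bernoulli shift, so all free standard $\Gamma$-spaces are weakly equivalent to one another. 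Since $\pfc\Gamma$ is itself free, this yields $\pfc\Gamma \prec \alpha$.

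Applying Theorem~\ref{thm:wc} to both weak containments proves the first chain of equalities $\ifsv M^\alpha = \ifsv M^{\pfc\Gamma} = \stisv M$. For the ``in particular'' clause, Theorem~\ref{thm: relation foliated stable} gives $\ifsv M \leq \stisv M$; conversely, $\ifsv M$ can be computed as the infimum of $\ifsv M^\alpha$ over free (ergodic) standard $\Gamma$-spaces (as already invoked in the proof of Corollary~\ref{cor:ifsvpfc}), and every such $\alpha$ contributes $\ifsv M^\alpha = \stisv M$, so $\ifsv M \geq \stisv M$ as well.

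The only non-trivial inputs are the ergodic-theoretic facts in the second paragraph, namely property~$\MD$ for residually finite amenable groups and the uniqueness of the weak-equivalence class of free actions of amenable groups. Both are standard results from the theory of weak containment, independent of the topological content of this paper, so the proof amounts to correctly quoting them and invoking Theorem~\ref{thm:wc} in both directions.
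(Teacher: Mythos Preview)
Your approach is essentially the same as the paper's: invoke the fact that for an infinite amenable group all free standard $\Gamma$-spaces are weakly equivalent, then apply Theorem~\ref{thm:wc} in both directions and Theorem~\ref{thm: relation foliated stable}. Two remarks:

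First, your detour through property~$\MD$ is unnecessary. Once you invoke (as you do for the direction~$\pfc\Gamma \prec \alpha$) that all free standard $\Gamma$-spaces of an amenable group are weakly equivalent, and observe that residual finiteness makes $\Gamma \actson \pfc\Gamma$ free, you already have $\alpha \sim \pfc\Gamma$ without appealing to~$\MD$. The paper simply cites the weak-equivalence fact directly (Foreman--Weiss, Kechris) and is done.

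Second, there is a genuine gap: you do not treat the case of finite~$\Gamma$. Theorem~\ref{thm:wc} is stated only for \emph{non-atomic} standard $\Gamma$-spaces, but when $\Gamma$ is finite the profinite completion~$\pfc\Gamma = \Gamma$ is purely atomic, so your two applications of Theorem~\ref{thm:wc} are not justified. The paper handles this case separately by a direct computation (for finite~$\Gamma$ one has $\ifsv M^\alpha = \isv{\widetilde M}/|\Gamma|$ for every free standard $\Gamma$-space~$\alpha$, and likewise $\stisv M = \isv{\widetilde M}/|\Gamma|$). You should add this case distinction.
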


\begin{proof}
  If $\Gamma$ is finite, then this is a straightforward
  calculation~\cite[Proposition~4.15,
    Corollary~4.27]{loehpagliantini}.

  If $\Gamma$ is infinite, then all free standard $\Gamma$-spaces are
  weakly equivalent~\cite{foremanweiss,kechris}; in particular, they
  are weakly equivalent to the profinite completion~$\pfc \Gamma$. Now
  the claim follows from Theorem~\ref{thm:wc} and
  Theorem~\ref{thm: relation foliated stable}.
\end{proof}

The case of aspherical manifolds with amenable fundamental
group will be discussed in Section~\ref{sec:amenablevanishing}.

\begin{cor}[free fundamental groups]\label{cor:free}
  Let $M$ be an oriented closed connected manifold with free fundamental 
  group~$\Gamma$. Then
  \[ \ifsv M = \ifsv M^{\pfc \Gamma} = \stisv M. 
  \]
\end{cor}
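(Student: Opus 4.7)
The plan is to deduce Corollary~\ref{cor:free} from Corollary~\ref{cor:ifsvpfc} after verifying that free groups satisfy property~$\EMD*$.

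First, I would dispose of the degenerate case where $\Gamma$ is trivial (free of rank~$0$). In that case $M$ is simply connected, admits no non-trivial finite covering (so $\stisv M = \isv M$), and the finite-fundamental-group computation recalled at the beginning of the proof of Theorem~\ref{thm:wc} gives $\ifsv M^\alpha = \isv M$ for every standard $\Gamma$-space~$\alpha$. Since $\pfc \Gamma$ is trivial in this case, the three quantities $\ifsv M$, $\ifsv M^{\pfc \Gamma}$, and $\stisv M$ all equal~$\isv M$.

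Assume now that $\Gamma$ is a free group of positive rank. Then $\Gamma$ is countably infinite and residually finite, so that the left-translation action on~$\pfc \Gamma$ is a non-atomic essentially free standard $\Gamma$-space, as recalled in Subsection~\ref{subsec: actions}. The key input I would import from the literature is that every countable free group has property~$\EMD*$, that is, every ergodic standard $\Gamma$-space is weakly contained in the profinite completion action. This is a classical result from descriptive dynamics due to Kechris (see also the work of Bowen and Tucker-Drob). Given this, Corollary~\ref{cor:ifsvpfc} immediately yields
\[
   \ifsv M = \ifsv M^{\pfc \Gamma} = \stisv M,
\]
as desired.

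The only point that is not bookkeeping is the $\EMD*$ property for free groups; this is the crucial ergodic-theoretic ingredient and the only place where we rely on results external to the paper. No further argument about fundamental cycles or coverings is needed, because all the heavy lifting has already been absorbed into Theorem~\ref{thm:wc}, Theorem~\ref{thm: relation foliated stable} and Corollary~\ref{cor:ifsvpfc}.
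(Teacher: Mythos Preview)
Your proposal is correct and follows essentially the same approach as the paper: handle the trivial group separately, then invoke the $\EMD*$ property of free groups of non-zero rank (citing Kechris) and apply Corollary~\ref{cor:ifsvpfc}. Your write-up is slightly more detailed in the trivial case and in noting residual finiteness, but the logical structure is identical.
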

\begin{proof}
  If $\Gamma$ is trivial, then this chain of equalities clearly
  holds~\cite[Example~4.5]{loehpagliantini}. Moreover, it is known
  that free groups of non-zero rank
  satisfy~$\EMD*$~\cite{kechrisfree}. Hence, we can apply
  Corollary~\ref{cor:ifsvpfc}.
\end{proof}

\begin{rem}\label{rem:freenonvanishing}
  If $M$ is an oriented closed connected manifold with free
  fundamental group~$\Gamma$ of rank~$r$, then $\sv M =0$~\cite[p.~76]{loeh_phd}.  
  However, we will now see that if $r \geqslant 2$, then 
  \[ \ifsv M = \stisv M > 0. 
  \]
  Looking at the classifying map~$M \longrightarrow B \Gamma$ shows 
  that~\cite[Theorem~1.35(1), Example~1.36]{lueck}
  \[ b_1^{(2)}(M) = b_1^{(2)} (\widetilde M, \Gamma) 
     \geqslant b_1^{(2)}(\Gamma) = r -1 > 0. 
  \]
  Hence, the fact that integral foliated simplicial volume (and also
  stable integral simplicial volume) provide an upper bound for
  $L^2$-Betti numbers~\cite[Corollary~5.28]{mschmidt} implies that
  also~$\ifsv M = \stisv M > 0$.

  In case of rank~$r = 1$, then there are examples with vanishing
  stable integral simplicial volume (e.g., $S^1$), but also examples
  with non-vanishing stable integral simplicial volume (e.g., $(S^1
  \times S^{n-1}) \# (S^2 \times S^{n-2})$ for all~$n {\geqslant
    2}$, as one can see using the formula of $L^2$-Betti numbers for
  connected sums~\cite[Theorem~1.35(6)]{lueck} and the mentioned upper
  bound by Schmidt).
\end{rem}

Let us now consider the case of hyperbolic $3$-manifolds.
\begin{prop}\label{vf}
The fundamental group of a virtually fibered closed hyperbolic $3$-manifold has property~$\EMD*$.
\end{prop}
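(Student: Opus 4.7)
The plan is to reduce, via the virtually fibered hypothesis, to a finite-index subgroup that is a semidirect extension by $\Z$, and then invoke three standard permanence properties of property~$\EMD*$ (equivalently $\mathrm{MD}$ for residually finite groups) from Kechris and Tucker-Drob's theory of weak containment of measure preserving actions.

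First, since $M$ is virtually fibered, there is a finite cover~$\widehat M \to M$ that fibers over~$S^1$; writing $\Gamma' := \pi_1(\widehat M)$, this is a finite-index subgroup of~$\Gamma$ fitting into a short exact sequence
\[
1 \longrightarrow \pi_1(\Sigma) \longrightarrow \Gamma' \longrightarrow \Z \longrightarrow 1,
\]
where $\Sigma$ is a closed orientable surface of genus at least~$2$ (namely, the fiber of~$\widehat M\to S^1$; its genus is at least $2$ because $\widehat M$ is hyperbolic and hence aspherical of negative Euler characteristic).

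Next, I would invoke the following three ingredients from ergodic theory:
\begin{enumerate}[(a)]
\item Surface groups of genus at least~$2$ satisfy~$\EMD*$. This can be proved by approximating~$\pi_1(\Sigma)$ by free groups in the space of marked groups and using that~$\EMD*$ (equivalently~$\mathrm{MD}$) is preserved under taking such limits, combined with the fact that free groups satisfy~$\EMD*$ (Kechris).
\item If $N \triangleleft G$ is a normal subgroup of a countable residually finite group with $N$ satisfying~$\EMD*$ and $G/N$ amenable, then $G$ satisfies~$\EMD*$ (Tucker-Drob).
\item If $H \leq G$ is a finite-index subgroup and $H$ satisfies~$\EMD*$, then $G$ satisfies~$\EMD*$ (Tucker-Drob).
\end{enumerate}
Applying (a) to~$\pi_1(\Sigma)$ and then (b) to the extension~$1 \to \pi_1(\Sigma) \to \Gamma' \to \Z \to 1$ (where the quotient $\Z$ is amenable), we conclude that $\Gamma'$ has~$\EMD*$. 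Applying (c) to the finite-index inclusion $\Gamma' \leq \Gamma$ gives the desired conclusion that $\Gamma$ has~$\EMD*$.

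The main obstacle is pinning down the precise statements and references for (a)--(c) in the literature, and checking that the version of each property we invoke matches Definition~\ref{def:EMD*}. Once this bookkeeping is done the argument is essentially formal, with no further geometric input beyond the initial virtual fibration.
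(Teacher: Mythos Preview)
Your overall strategy matches the paper's proof exactly: establish $\EMD*$ for the surface group, pass it up through the extension by~$\Z$, and then from the finite-index subgroup to~$\Gamma$. The paper also notes the equivalence of $\EMD*$ and $\mathrm{MD}$ for residually finite groups and phrases the inheritance steps via Bowen--Tucker-Drob's $\mathrm{MD}$-inheritance theorem.

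The one substantive difference is your justification of~(a). The paper does \emph{not} argue via limits of free groups in the space of marked groups; instead it invokes Lubotzky--Shalom, who show that a surface group, with the kernel of its abelianization map as normal subgroup, satisfies the hypotheses of Bowen--Tucker-Drob's $\mathrm{MD}$-inheritance theorem, yielding $\mathrm{MD}$ (hence $\EMD*$) for surface groups directly. Your proposed route via marked-group limits is not standard here, and I am not aware of a result saying $\mathrm{MD}$ or $\EMD*$ passes to such limits in general; this would need an independent argument. Since you already flagged pinning down references as the main obstacle, the fix is simply to replace your sketch for~(a) with the Lubotzky--Shalom argument.
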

\begin{proof}
This result has already been noticed by Kechris~\cite{kechrisfree} and
Bowen and Tucker-Drob~\cite{BowenTucker}. For the sake of
completeness, we include a proof. 
First of all, notice that for residually finite groups property~$\EMD*$ is equivalent to 
property $\MD$~\cite[Theorem 1.4]{Tucker}, another universal property related to profinite completion 
due to Kechris~\cite{kechrisfree}.
Hence, we deduce that surface
groups have property~$\MD$~\cite[Theorem~1.4]{BowenTucker}: Indeed, using
as normal subgroup of a surface group the kernel of its abelianization
map, Lubotzky and Shalom~\cite[Theorem 2.8]{LubotskyShalom} showed
that a surface group satisfies the hypotheses for the $\MD$-inheritance result~\cite[Theorem 1.4]{BowenTucker}.

Let us now consider the fundamental group $\Gamma$ of a closed hyperbolic \mbox{$3$-mani}\-fold that fibers over $S^1$, i.e.,
the semidirect product of a surface group~$\Lambda$ and $\Z$. 
Taking as normal subgroup of $\Gamma$ the surface group $\Lambda$ we may again 
apply $\MD$-inheritance~\cite[Theorem~1.4]{BowenTucker} to   
conclude that $\Gamma$ has property~$\MD$, and hence property $\EMD*$.
For residually finite groups property~$\EMD*$ is preserved by passing from a finite index subgroup to the ambient group, hence yielding the conclusion.
\end{proof}

\begin{cor}[hyperbolic $3$-manifolds]\label{cor:hyp3}
 Let $M$ be an oriented closed connected hyperbolic $3$-mani\-fold. Then 
$$
\sv M=\ifsv M= \stisv M.
$$
\end{cor}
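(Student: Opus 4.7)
The plan is to combine the $\EMD*$ machinery developed in Proposition~\ref{vf} with Agol's virtual fibering theorem and the previously known agreement of classical and integral foliated simplicial volume in dimension~$3$.

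First I would invoke Agol's virtual fibering theorem: every oriented closed connected hyperbolic $3$-manifold~$M$ admits a finite covering~$M'\to M$ such that $M'$ fibers over~$S^1$. Writing $\Gamma=\pi_1(M)$ and $\Gamma'=\pi_1(M')$, this gives a finite index subgroup $\Gamma'<\Gamma$ which, by Proposition~\ref{vf}, enjoys property~$\EMD*$. As recalled in the final sentence of the proof of Proposition~\ref{vf}, property~$\EMD*$ passes from a finite index subgroup to the ambient group in the residually finite case (hyperbolic $3$-manifold groups are residually finite), so $\Gamma$ itself has~$\EMD*$.

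Second, I would apply Corollary~\ref{cor:ifsvpfc} to conclude that
\[
   \ifsv M = \ifsv M^{\pfc \Gamma} = \stisv M.
\]
Finally, I would quote the result of L\"oh--Pagliantini~\cite[Theorem~1.1]{loehpagliantini} (recalled in the introduction of this paper), according to which $\sv M = \ifsv M$ for every oriented closed connected hyperbolic $3$-manifold. Concatenating the two equalities yields $\sv M = \ifsv M = \stisv M$, as desired.

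There is no substantive obstacle left: the content is entirely in the cited ingredients. The only point requiring care is the passage from virtually fibered to all closed hyperbolic $3$-manifolds, which rests on Agol's deep theorem; all other steps are direct applications of results already established earlier in the paper.
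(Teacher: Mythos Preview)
Your proof is correct and follows the same route as the paper. The only minor redundancy is that you apply Proposition~\ref{vf} to the fibered cover~$M'$ and then separately invoke the passage of~$\EMD*$ to the ambient group, whereas Proposition~\ref{vf} is already stated for \emph{virtually} fibered manifolds and so applies directly to~$M$ once Agol's theorem is cited; but this is a cosmetic difference, not a mathematical one.
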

\begin{proof}
Agol's virtual fiber theorem \cite{Agol} and Proposition \ref{vf} give
that the fundamental group of \emph{every} closed hyperbolic
$3$-manifold has property~$\EMD*$. Hence, Corollary
\ref{cor:ifsvpfc} implies that integral foliated simplicial volume
and stable integral simplicial volume are equal for closed
hyperbolic $3$-manifolds.  On the other hand, it is known that $\sv M
= \ifsv M$ in this case~\cite[Theorem~1.1]{loehpagliantini}.
\end{proof}

Notice that the corresponding result for higher-dimensional manifolds does 
not hold (Section~\ref{sec:hyp}).

\begin{rem}
An example of group which does not satisfy property $\EMD*$ is $\text{SL}(n,\Z)$
for $n> 2$~\cite{BowenTucker}.
\end{rem}

\section{Homology bounds via integral foliated simplicial volume}\label{sec: homology bounds}

Theorem~\ref{intro-thm:homology bound}, which we prove in this section, can be quickly deduced 
from Theorem~\ref{thm: relation foliated stable} and suitable estimates for torsion and rank 
in terms of integral simplicial volume:

\begin{proof}[Proof of Theorem~\ref{intro-thm:homology bound}]
Every oriented closed connected $n$-manifold~$N$ satisfies the torsion homology bound~\cite[Theorem~3.2]{sauer-homologygrowth} 
\[
	\log(|\tors H_k(N;\Z)|)\le\log(n+1)\binom{n+1}{k+1}\isv N
\]
in every degree~$k$. 
In particular, by Theorem~\ref{thm: relation foliated stable} we obtain for the tower of finite coverings 
associated to the chain $(\Gamma_i)_i$  of $\Gamma=\pi_1(M)$ that 
\begin{align*}
	\limsup_{i\to\infty}\frac{\log(|\tors H_k(M_i;\Z)|)}{[\Gamma:\Gamma_i]}&\le\log(n+1)2^{n+1} \cdot \lim_{i\to\infty}
	\frac{\isv{M_i}}{[\Gamma:\Gamma_i]}\\
	&=\log(n+1)2^{n+1} \cdot \ifsv M^\alpha, 
\end{align*}
where $n$ denotes the dimension of~$M$. Starting from the Betti number estimate in 
Lemma~\ref{lem:bettiestimate} below we similarly obtain
\[ \limsup_{i \to\infty} \frac{\rk_R H_k(M_i;R)}{[\Gamma:\Gamma_i]} 
   \leq \ifsv M^\alpha
\]
for all principal ideal domains~$R$.
\end{proof}

Poincar\'e duality allows to bound Betti numbers in terms of integral simplicial volume. 
For the sake of completeness, we give a simple proof of this fact. For simplicity, we consider 
only principal ideal domains as coefficients.

\begin{lemma}\label{lem:bettiestimate}
  Let $R$ be a principal ideal domain and let $M$ be an oriented
  closed connected $n$-manifold. Then for all~$k \in \matN$ we have
  \[ \rk_R H_k(M;R) \leq \isv M. 
  \]
\end{lemma}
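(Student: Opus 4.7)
The plan is to combine Poincar\'e duality over~$R$ with the explicit chain-level cap product formula. First, I would invoke the standard Poincar\'e duality isomorphism
\[
  \cap [M]_R \colon H^{n-k}(M;R) \xrightarrow{\ \cong\ } H_k(M;R),
\]
valid since $M$ is $R$-oriented, closed, and connected. This isomorphism is, in particular, surjective; realizing its surjectivity at the chain level is what will let me exploit the combinatorics of a fundamental cycle.

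Next, since $\isv M$ is an infimum over non-negative integers, it is attained: I would fix an integral fundamental cycle $c = \sum_{i=1}^N a_i \sigma_i$ in reduced form with $|c|_1 = \sum_i |a_i| = \isv M$, so that the number $N$ of distinct singular $n$-simplices appearing in $c$ satisfies $N \leq \isv M$. The key observation is that by the chain-level cap product formula, every cocycle $\phi \in Z^{n-k}(M;R)$ satisfies
\[
  \phi \cap c = \sum_{i=1}^N a_i \cdot \phi\bigl(\sigma_i|_{[v_0,\dots,v_{n-k}]}\bigr) \cdot \sigma_i|_{[v_{n-k},\dots,v_n]}.
\]
Hence $\phi \cap c$ lies in the $R$-submodule $V \subset C_k(M;R)$ spanned by the $N$ fixed $k$-simplices $\tau_i := \sigma_i|_{[v_{n-k},\dots,v_n]}$, and it is automatically a $k$-cycle because $c$ is closed and $\phi$ is a cocycle.

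To finish, since $\cap [M]_R$ is surjective, every class in $H_k(M;R)$ is represented by some cycle $\phi \cap c$, and so the natural map $Z_k(V) := V \cap \ker\partial \to H_k(M;R)$ is surjective. Thus $H_k(M;R)$ is a quotient of $Z_k(V)$, which in turn is a submodule of the free $R$-module~$V$ of rank~$N$. Since $R$ is a principal ideal domain, this yields
\[
  \rk_R H_k(M;R) \leq \rk_R Z_k(V) \leq \rk_R V = N \leq \isv M,
\]
as desired. There is no serious obstacle here; the only step requiring care is the cap product bookkeeping, i.e., correctly pinning down the fixed set of $N$ specific $k$-simplices onto which every cycle of the form $\phi \cap c$ is supported, and ensuring one uses the integer fundamental cycle attaining $\isv M$ rather than an approximating one.
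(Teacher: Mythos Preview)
Your argument is correct and essentially identical to the paper's: both pick an integral fundamental cycle~$c$ in reduced form realizing~$\isv M$, use the chain-level cap product formula to see that the surjective Poincar\'e duality map lands in the span of the back $k$-faces of the simplices of~$c$, and conclude via the PID hypothesis that $\rk_R H_k(M;R)$ is bounded by the number of these simplices. The only cosmetic difference is that the paper phrases the conclusion as ``$H_k(M;R)$ is a quotient of a submodule of a free $R$-module of rank at most~$m$'' without naming the intermediate module~$V$; note also that your~$V$ has rank \emph{at most}~$N$ (the back faces~$\tau_i$ need not be distinct), but this does not affect the inequality.
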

\begin{proof}
  Let $c = \sum_{j=1}^m a_j \cdot \sigma_j \in C_n(M;\Z)$ be a
  fundamental cycle of~$M$ in reduced form with~$|c|_{1,\Z} = \isv M$
  and let $k \in \matN$. Then the Poincar\'e duality map
  \begin{align*}
    \cdot \cap [M]_\Z \colon H^{n-k}(M;R) & \longrightarrow H_k(M;R) \\
    [f] & \longmapsto
    [f \cap c] = 
    (-1)^{k \cdot (n-k)}
    \cdot 
    \biggl[ \sum_{j=1}^m a_j \cdot f({}_{n-k} \lfloor\sigma) \cdot \sigma \rfloor_k
    \biggr]
  \end{align*}
  is surjective. In particular, $H_k(M;R)$ is a quotient of a submodule 
  of a free $R$-module of rank at most~$m$. So, 
  $\rk_R H_k(M;R)  \leq m \leq |c|_{1,\Z} = \isv M.
  $
\end{proof}

\section{Integral foliated simplicial volume of higher-dimensional hyperbolic manifolds}\label{sec:hyp}

This section is devoted to the proof of Theorem~\ref{higher:hyp:thm}:

\begin{thm}\label{thm:hyp}
  For all~$n \in \matN_{\geq 4}$ there is a~$C_n \in \R_{<1}$ with the following property: For all oriented closed connected hyperbolic $n$-manifolds~$M$ we have
  \[ \sv M \leq C_n \cdot \ifsv M. 
  \]
\end{thm}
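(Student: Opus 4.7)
The plan is to adapt the geometric argument of Francaviglia, Frigerio and Martelli for stable integral simplicial volume~\cite{FFM} to parametrized fundamental cycles, using Lemma~\ref{lem:lffundcycle} to reduce matters to a per-point estimate for locally finite integral fundamental cycles of $\widetilde M = \matH^n$.

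Let $\alpha = \Gamma \actson (X,\mu)$ be a free standard $\Gamma$-space, fix a strict fundamental domain $D \subset \widetilde M$ for the deck-transformation action, and let $c = \sum_{j=1}^k f_j \otimes \sigma_j$ be an $\alpha$-parametrized fundamental cycle of $M$ in reduced form with every $\sigma_j$ having $0$-vertex in $D$; the $\sigma_j$ are then automatically pairwise distinct, since $D$ is a strict fundamental domain of a free action. By Lemma~\ref{lem:lffundcycle}, for $\mu$-a.e.\ $x \in X$ the chain
$$c_x = \sum_{\gamma \in \Gamma}\sum_{j=1}^k f_j(\gamma^{-1}\cdot x)\cdot \gamma\cdot \sigma_j$$
is a locally finite integral fundamental cycle of $\widetilde M$, and by Remark~\ref{rem:red1norm}
$$\ifsv c^\alpha = \int_X \sum_{j=1}^k |f_j(x)|\, d\mu(x).$$

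Next, I would replace each $\sigma_j$ by its geodesic straightening; by chain-homotopy invariance of straightening, $c_x^{\str}$ remains a locally finite $\Z$-fundamental cycle of $\widetilde M$, and integrating the hyperbolic volume form against $c_x^{\str}$ over a fundamental domain for $M$ recovers $\vol(M)$ for a.e.\ $x$. The key geometric ingredient is the following local defect principle, extracted from the argument of~\cite{FFM}: there exists a constant $\eta_n > 0$, depending only on $n \geq 4$, such that for $\mu$-a.e.\ $x \in X$
$$\vol(M) \leq (v_n - \eta_n) \cdot \sum_{j=1}^k |f_j(x)|.$$
The reason is that the dihedral angle $\theta_n$ at codimension-two faces of the regular ideal $n$-simplex in $\matH^n$ does not divide $2\pi$ when $n \geq 4$. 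Consequently, around each $(n-2)$-face occurring in $c_x^{\str}$, either the incident simplices cannot all be close to regular ideal simplices of definite size (losing a definite amount of volume against $v_n$), or the incident $n$-simplices must overlap with signs producing cancellation in the computation of $\vol(M)$. Quantifying this local defect and summing over the $(n-2)$-faces that lie within a fundamental domain yields the claimed estimate with a constant $\eta_n$ depending only on $n$.

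Integrating this inequality over $(X,\mu)$ and using $\sv M = \vol(M)/v_n$ gives
$$\sv M \leq \frac{v_n - \eta_n}{v_n}\cdot \ifsv c^\alpha,$$
which after taking the infimum over $c$ and over all $\alpha$ yields the theorem with $C_n := (v_n - \eta_n)/v_n < 1$. The main obstacle is precisely the per-point geometric estimate: in~\cite{FFM} the analysis is already essentially local around each codimension-two face, but is packaged into a global inequality on a closed finite cover, whereas here one has to work with the single infinite cover $\widetilde M$ and distribute the contributions correctly between the simplices with $0$-vertex in $D$ (which get counted in $\sum_j |f_j(x)|$) and their $\Gamma$-translates that intersect $D$ and contribute to $\vol(D)=\vol(M)$. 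Making this bookkeeping uniform in $x \in X$, so that the local defect can be genuinely integrated out, is the technical heart of the argument.
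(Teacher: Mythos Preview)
Your plan has a genuine gap at the ``per-point'' step. The inequality
\[
  \vol(M) \leq (v_n-\eta_n)\cdot \sum_{j=1}^k |f_j(x)|
\]
simply does not hold for $\mu$-a.e.\ $x$, and cannot be salvaged by bookkeeping. The quantity $\sum_j |f_j(x)|$ counts only those simplices of~$c_x$ whose $0$-vertex lies in~$D$; since $c_x$ is in general \emph{not} $\Gamma$-equivariant, these simplices do not form a cycle and need not cover~$D$ at all. Concretely, one can produce parametrized fundamental cycles for which $\sum_j |f_j(x)| = 0$ on a set of positive measure (e.g., via a finite quotient action $\Gamma\actson\Gamma/\Lambda$ and an integral fundamental cycle of the cover all of whose simplices have $0$-vertex in a single sheet of the fundamental domain), while $c_x$ is still a perfectly good locally finite fundamental cycle of~$\matH^n$. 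For such~$x$ your inequality reads $\vol(M)\leq 0$. The Francaviglia--Frigerio--Martelli argument works on a \emph{closed} manifold, where every simplex and every ridge of the cycle is accounted for; on~$\matH^n$ the defects at the ridges of the $D$-based simplices are shared with translates that are invisible to~$\sum_j|f_j(x)|$, and there is no amenability of~$\matH^n$ to average this away.

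The paper avoids this obstruction by \emph{not} attempting a pointwise volume-versus-norm inequality. It integrates first: one passes to the real fundamental cycle $c_\R=\sum_i \beta_i(\pi\circ\sigma_i)$ with $\beta_i=\varepsilon_i\mu(A_i)$, so that $\vol(M)=\sum_i |\beta_i|\,\vola(\sigma_i)$ is already an identity of integrated quantities. The ridge analysis is then performed through measurable subsets $F(e),\NF(e)\subset X$ (recording for which~$x$ a given ridge is ``full'' in~$c_x$) and their measures. The pointwise cycles~$c_x$ are used only to show that a certain ``generalized chain'' built from $c_\R$ and small balls around incenters of ridges has local degree~$\geq 1$ almost everywhere in~$M$ (a statement that \emph{is} amenable to a pointwise proof). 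Integrating that degree bound yields an inequality relating $\vol(M)$, $\ifsv c^\alpha$, and the measures $\mu(F(e))$, $\mu(F(\sigma_i,\tau))$; a case split on the weight of small simplices and on $\sum_e\mu(F(e))$ then gives the uniform constant~$C_n<1$. In short: integrate over~$X$ before comparing volume to norm, and reserve the pointwise argument for the (robust) degree estimate rather than for the (fragile) volume inequality.
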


\subsection{Setup}\label{subsec:hypsetup}

As usual, we denote by $\Gamma\cong \pi_1(M)$ the automorphism group of the universal covering $\pi\colon \widetilde{M}\to M$. We
also fix a standard $\Gamma$-space $X$, and we denote by $\alpha$ the action of $\Gamma$ on $X$.
Let 
$
c \in\ C_n(M,\alpha)=\linfz{X,\mu}\otimes_{\mathbb{Z}\Gamma} C_n(\widetilde{M};\mathbb{Z})
$
be a fundamental cycle for $M$. We  rewrite $c$ in a convenient form for the computations we are going to 
carry out. Namely, for a suitable finite set $I=\{1,\ldots,h\}$ of indices  we have
\begin{equation}\label{elementary}
c=\sum_{i\in I} \vare_i \chi_{A_i}\otimes \sigma_i\ ,
\end{equation}
where:
\begin{enumerate}
\item[(i)] the first vertex of each $\sigma_i$ lies in a
  fixed (set-theoretical, strict) fundamental domain $D$ for the $\Gamma$-action
  on~$\widetilde M$ by deck transformations;
 \item[(ii)] $\varepsilon_i\in \{1,-1\}$ for every $i\in I$;
 \item[(iii)] for every $i,j\in I$, either $A_i=A_j$, or $A_i\cap A_j=\emptyset$;
 \item[(iv)] if $A_i=A_j$ and $\sigma_i=\sigma_j$, then $\varepsilon_i=\varepsilon_j$.
\end{enumerate}
We set 
$$
\beta_i=\vare_i\mu(A_i)\in \R\ ,
$$
so
$$
\ifsv{c}^\alpha=\sum_{i\in I} |\beta_i|\ ,
$$
and we recall that
the chain
$$
c_\R=\sum_{i\in I} \beta_i (\pi\circ\sigma_i)= \sum_{i\in I} \vare_i\mu(A_i) (\pi\circ\sigma_i)\ \in C_n(M,\R) 
$$
is a real fundamental cycle for $M$~\cite[Remark~5.23]{mschmidt}\cite[Proposition~4.6]{loehpagliantini}. 

A crucial role in our argument will be played by the locally finite chain 
$$
c_x=\sum_{i\in I}\sum_{\gamma\in \Gamma} \vare_i \chi_{A_i}(\gamma^{-1}x)\cdot \gamma\sigma_i\in C_n^{\txt{lf}}(\widetilde M;\Z)
$$ 
defined for $\mu$-a.e.~$x\in X$, which is a 
locally finite fundamental cycle
of~$\widetilde M$ (Lemma~\ref{lem:lffundcycle}).

Observe that, as a consequence of our choices, if $\gamma\sigma_i=\gamma'\sigma_j$ for some $\gamma,\gamma'\in\Gamma$ and
$i,j\in I$, then $\gamma=\gamma'$ and $\sigma_i=\sigma_j$ (but possibly $i\neq j$). If this is the case and $\gamma^{-1}(x)\in A_i\cap A_j$, then
the singular simplex $\gamma\sigma_i=\gamma\sigma_j$ appears with multiplicities in $c_x$. However, we will 
keep track of the fact that these singular simplices arise from distinct summands in~\eqref{elementary}. In this spirit, the chain $c_x$ should be 
considered just as a sum of signed simplices (i.e., a locally finite chain whose coefficients lie in $\{\pm 1\}$), possibly with repetitions. 
To be precise
we say that the pair $(\gamma,i)\in\Gamma\times I$ \emph{appears in}
$c_x$ if $\gamma^{-1}(x)\in A_i$. Note that it could happen
that $\gamma\sigma_i=\gamma\sigma_j$ for some $i\neq j$, but $(\gamma,i)$ appears in $c_x$, while $(\gamma,j)$ does not.

By the very definitions, for every $i\in I,\gamma\in\Gamma$ we have
\begin{equation}\label{coefficient:equation}
\mu\left(\{x\in X\, |\, (\gamma,i)\ \textrm{appears\ in}\ c_x\}\right)=\mu(\gamma A_i)=\mu(A_i)=|\beta_i|\ .
\end{equation}

Moreover, we will assume that $c$ is straight as explained below. 

\subsection{Straight simplices}
Recall that a geodesic $p$-simplex in $\mathbb{H}^n$ is just the convex hull of $(p+1)$ (ordered) points
lying in $\overline{\mathbb{H}}^n=\mathbb{H}^n\cup \partial \mathbb{H}^n$. Such a simplex is \emph{ideal} if all its vertices belong to $\partial \mathbb{H}^n$, and it is
\emph{degenerate} if its vertices lie on a $(p-1)$-dimensional hyperbolic subspace of $\overline{\mathbb{H}}^n$. 
If $\sigma\colon \Delta^p\to \mathbb{H}^n$ is a singular simplex, then we denote by $\str_p(\sigma)$ the straight simplex associated
to $\sigma$, i.e., the barycentric parameterization
of the unique geodesic simplex having the same vertices as $\sigma$. It is well known that the straightening map can be linearly extended to an
${\rm Isom}(\mathbb{H}^n)$-equivariant chain map $\str_*\colon C_*(\mathbb{H}^n,\mathbb{Z})\to C_*(\mathbb{H}^n,\mathbb{Z})$,
which is ${\rm Isom}(\mathbb{H}^n)$-equivariantly homotopic to the identity~\cite[\S 11.6]{Ratcliffe}. As a consequence, the norm non-increasing map
$$
\id \otimes_{\Z \Gamma} \str_*\colon C_*(M,\alpha)\to C_*(M,\alpha)
$$
induces the identity in homology. Therefore, henceforth we will assume that each $\sigma_i$ in our parametrized foliated fundamental cycle $c$
is straight.

We define the \emph{algebraic volume} of $\sigma_i$ by setting
$$
\vola(\sigma_i)=\vare_i\int_{\sigma_i} \omega_{\widetilde{M}}\ ,
$$
where $\omega_{\widetilde{M}}$ is the volume form of $\widetilde{M}=\mathbb{H}^n$. Since $c_\R$ is a real fundamental cycle for $M$ we have the equality
\begin{equation}\label{fundamental:eq}
 \vol(M)=\sum_{i\in I} |\beta_i| \vola(\sigma_i)\ .
\end{equation}

The fact that parametrized integral cycles cannot be used to produce efficient cycles descends from the following observation:
in dimension greater than~$3$, the dihedral angle of the regular ideal simplex does not divide $2\pi$. 
As a consequence, for a.e.~$x\in X$ the integral (locally finite) cycle $c_x$ must contain a certain quantity of small simplices. This implies in turn that
$c_x$ cannot project onto an efficient fundamental cycle of $M$.
In the case when $c_x$ is $\Gamma$-equivariant, this fact  is precisely stated and proved by Francaviglia, Frigerio, and Martelli~\cite{FFM}, and implies that
the ratio between the (stable) integral simplicial volume and the ordinary simplicial volume
of $M$ is strictly bigger than one.
However, $c_x$ is not $\Gamma$-equivariant in general, so more work is needed in our context.

Let us first  recall the 
statements from the equivariant case~\cite{FFM} that we will be using later on. A \emph{ridge} of a geodesic simplex (or of a singular simplex) is a
face of the simplex of codimension~$2$.
If $\Delta$ is a nondegenerate geodesic $n$-simplex in $\mathbb{H}^n$ and
$E$ is a ridge $\Delta$, then the \emph{dihedral angle}
$\alpha (\Delta,E)$ 
of~$\Delta$ at $E$ is defined in the following way: let $p$ be a point in $E\cap \matH^n$, and let $H\subseteq \matH^n$ be the unique $2$-dimensional geodesic plane which intersects 
$E$ orthogonally  in~$p$. We define 
$\alpha(\Delta,E)$ as the angle in $p$ of the polygon
$\Delta\cap H$ of $H\cong \matH^2$. It is easily seen that this is well-defined (i.e., independent of $p$).
For every $n\geqslant 3$, 
we denote  the dihedral angle of the ideal regular $n$-dimensional simplex at any of its ridges by $\alpha_n$.

It is readily seen by intersecting the simplex with a horosphere
centered at any vertex that $\alpha_n$ equals the dihedral angle of
the regular  \emph{Euclidean} $(n-1)$-dimensional simplex at any of its
$(n-3)$-dimensional faces, so $\alpha_n=\arccos \frac 1{n-1}$. 
In particular, we have $\alpha_3=\arccos\frac{1}{2}=\frac \pi 3$. Since $\frac{2\pi}{6}<\arccos \frac{1}{3}<\frac{2\pi}{5}$ and
$\frac{2\pi}{5}<\arccos \frac{1}{n}<\frac{2\pi}{4}$ for every $n\geqslant 4$,
the real number $\frac{2\pi}{\alpha_n}$ is an integer if and only if $n=3$. 
For $n \in \matN_{\geq 4}$, we write~$k_n \in \matN$ for the unique
integer satisfying
$$k_n\alpha_n<2\pi<(k_n+1)\alpha_n;$$ 
then $k_n=5$ if $n=4$ and $k_n=4$ if $n\geqslant 5$.  If the volume of a geodesic simplex
is close to $v_n$, then the simplex must be close in shape to the regular ideal one, so one gets the following:

\begin{lemma}[{\cite[Lemma 3.16]{FFM}}]\label{trigonometry:lemma}
Let $n\geqslant 4$. Then, there exist $a_n>0$ 
and $\vare_n>0$, such that the 
following condition holds:
if a geodesic $n$-simplex $\Delta$ satisfies 
$\vol(\Delta)\geqslant (1-\vare_n)v_n$ and if 
$\alpha$ is the dihedral angle of $\Delta$
at any of its ridges, then
$$
\frac{2\pi}{k_n+1}(1+a_n) < \alpha < \frac{2\pi}{k_n}(1-a_n).
$$
\end{lemma}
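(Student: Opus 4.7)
The plan is to prove the lemma by a compactness/continuity argument built around the fact that the regular ideal $n$-simplex uniquely maximizes hyperbolic volume (Haagerup--Munkholm), and that its dihedral angle $\alpha_n = \arccos\tfrac{1}{n-1}$ lies \emph{strictly} inside the open interval $\bigl(\tfrac{2\pi}{k_n+1}, \tfrac{2\pi}{k_n}\bigr)$, as already observed in the text preceding the lemma. Once continuity of $\vol$ and of the dihedral angle on a suitable moduli space of simplices is in place, the statement is just a quantitative restatement of the fact that these are open conditions around the regular ideal simplex.

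First I would identify a geodesic $n$-simplex with the ordered $(n+1)$-tuple of its vertices in $\overline{\matH^n}$ and parametrize such simplices modulo $\Isom(\matH^n)$ by pinning down, say, the first three vertices. The resulting space of geodesic simplices (including ideal and degenerate ones) is compact in a natural topology coming from the closed ball model, and the volume function extends continuously to it, vanishing on the locus of degenerate simplices. By Haagerup--Munkholm, the maximum value $v_n$ is attained exactly on the $\Isom(\matH^n)$-orbit of the regular ideal simplex (together with vertex permutations), so in particular the sublevel set $\{\vol \geq (1-\delta) v_n\}$ is a shrinking family of neighborhoods of this orbit.

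Next, assume the lemma fails. Then I can find a sequence $\Delta_k$ of geodesic $n$-simplices with $\vol(\Delta_k) \to v_n$ and, by the pigeonhole principle over the $\binom{n+1}{2}$ ridges, a single ridge-type $E_k \subset \Delta_k$ whose dihedral angle $\alpha_k = \alpha(\Delta_k, E_k)$ violates the required two-sided bound for ever smaller $a_n$. After normalizing by $\Isom(\matH^n)$ and extracting a subsequence, $\Delta_k$ converges in the closed-ball moduli space to a limit simplex $\Delta_\infty$ with $\vol(\Delta_\infty) = v_n$, which by uniqueness of the maximizer is regular ideal; its ridge $E_\infty$ has dihedral angle $\alpha_n$. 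The desired contradiction then follows if I can show $\alpha_k \to \alpha_n$, since $\alpha_n$ sits in an \emph{open} sub-interval of $\bigl(\tfrac{2\pi}{k_n+1}, \tfrac{2\pi}{k_n}\bigr)$ and for large $k$ the $\alpha_k$ are forced outside it by construction.

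The main obstacle, and the only nontrivial point, is the continuity of the dihedral angle near the regular ideal simplex, where vertices escape to $\partial \matH^n$. I would handle this in either of two ways. One option: compute the dihedral angle at a ridge $E$ as the angle between the two inward unit normals, at some fixed point $p \in E \cap \matH^n$, of the codimension-$1$ facets meeting at $E$; these normals are continuous functions of the vertex coordinates in the closed ball model, provided the facets stay nondegenerate, which is the case in an open neighborhood of the regular ideal simplex (using $n \geq 4$, so ridges and facets still have interior points in $\matH^n$). A cleaner alternative: pick the ideal vertex $v$ of $\Delta_\infty$ shared by the two facets meeting along $E_\infty$, and slice by a horosphere centered at the corresponding limit of vertices; the dihedral angle at $E$ equals a Euclidean angle on this horospherical link, and continuity of the link's geometry under the convergence $\Delta_k \to \Delta_\infty$ gives $\alpha_k \to \alpha_n$. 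Either route yields the contradiction and completes the proof.
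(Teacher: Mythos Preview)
The paper does not give a proof of this lemma; it is quoted verbatim from \cite[Lemma~3.16]{FFM} as a black box. Your compactness/continuity argument is correct and is the standard route (and presumably close to what \cite{FFM} does): continuity of the volume on the compactified moduli space of geodesic simplices is precisely Luo's theorem~\cite{Luo} (also cited in the paper), uniqueness of the volume maximizer is Haagerup--Munkholm, and the rest is the open-condition argument you describe.

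One small clarification: your parenthetical ``using $n \geq 4$, so ridges and facets still have interior points in~$\matH^n$'' is not the reason the hypothesis $n \geq 4$ is needed --- already for $n=3$ the ridges of an ideal simplex are complete geodesics with plenty of points in~$\matH^n$, and your continuity argument for the dihedral angle goes through there as well. The genuine role of $n \geq 4$ is exactly the one stated in the text before the lemma: for $n=3$ one has $\alpha_3 = \pi/3$ and $2\pi/\alpha_3 = 6 \in \Z$, so the open interval $\bigl(\tfrac{2\pi}{k_n+1},\tfrac{2\pi}{k_n}\bigr)$ collapses and no positive $a_n$ can exist.
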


\subsection{The incenter and inradius of a simplex}
Consider a nondegenerate geodesic $k$-simplex $\Delta\subseteq \matH^n$, and let $H(\Delta)\subseteq \mathbb{H}^n$ be the unique hyperbolic subspace
of dimension $k$ containing $\Delta$. We are going to recall the definition of \emph{inradius} $r(\Delta)$ of $\Delta$, which is 
due to Luo~\cite{Luo}.

For every point $p\in \Delta$ we denote by $r_\Delta(p)$ the radius of the maximal $k$-ball of $H(\Delta)$ centered in $p$ and contained
in $\Delta$, and we set
$$
r(\Delta):=\sup_{p\in \Delta} r_\Delta(p)\ \in \ (0,+\infty]\ .
$$
Since the volume of any $k$-simplex is smaller than $v_k$ and
the volume of $k$-balls diverges as the radius diverges, there exists a constant $r_k>0$ such that
$r_\Delta(p)\leqslant r_k$ for every  $p\in \Delta$, so $r(\Delta)\in (0,+\infty)$.   
Moreover, there is a unique point~$p\in \Delta$ with $r_\Delta(p)=r(\Delta)$ \cite{Luo}\cite[Lemma~3.15]{FFM}.
Such a point is denoted by the symbol $\inc(\Delta)$, and it is called the \emph{incenter} of $\Delta$.

The following lemma shows that,
in big simplices, the incenter of a face is uniformly distant from any other non-incident face.

\begin{lemma}[{\cite[Lemma 3.15]{FFM}}]\label{palle}
Let $n\geqslant 3$. There exist $\varepsilon_n>0$ and $\delta_n>0$ such
that the following holds for every geodesic $n$-simplex $\Delta\subseteq \mathbb{H}^n$ 
with $\vol(\Delta)\geqslant v_n(1-\varepsilon_n)$: let $e$ be any face of $\Delta$ and $e'$ another face of $\Delta$ that does not contain $e$; then
$$d(\inc(e), e') > 2\delta_n.$$
In particular, if $e,e'$ are distinct ridges of $\Delta$, then 
\[ B(\inc(e),\delta_n)\cap B(\inc(e'),\delta_n)=\emptyset.\]
\end{lemma}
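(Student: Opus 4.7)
The plan is a compactness-plus-uniqueness argument that leverages the fact that the regular ideal $n$-simplex $\Delta_{\mathrm{reg}}$ is, up to an isometry of $\matH^n$, the unique geodesic $n$-simplex of maximal volume $v_n$. Since the desired conclusion is an open condition on the shape of $\Delta$, it suffices to verify the inequality at $\Delta_{\mathrm{reg}}$ and then extend to nearby simplices by continuity. I would first produce $\delta_n$ from $\Delta_{\mathrm{reg}}$ alone, and only afterwards choose $\varepsilon_n$.

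First I would check the statement at $\Delta_{\mathrm{reg}}$. Every face $e$ of $\Delta_{\mathrm{reg}}$ is itself a regular ideal simplex of lower dimension; its inradius $r(e)$ is finite by the same volume-of-balls argument recalled just before the lemma, so $\inc(e)$ is a well-defined point in the relative interior of $e$, and in particular in $\matH^n$. If $e'$ is a face of $\Delta_{\mathrm{reg}}$ not containing $e$, then $e\cap e'$ is a proper face of $e$ and therefore disjoint from the relative interior of $e$; combined with $\inc(e)\in\matH^n$ and $e'$ being closed, this forces $d(\inc(e),e')>0$. Taking the minimum over the finitely many such pairs $(e,e')$ gives a positive constant $4\delta_n$.

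Next I would run the continuity step. I parametrize geodesic $n$-simplices by their ordered vertex tuples in the compact space $(\overline{\matH^n})^{n+1}$ and quotient by $\Isom(\matH^n)$ (for instance by fixing three of the vertices). On this space $\vol$ extends continuously, with unique maximum attained at the class of $\Delta_{\mathrm{reg}}$; hence any $\Delta$ with $\vol(\Delta)$ close to $v_n$ is, after applying an isometry, arbitrarily close to $\Delta_{\mathrm{reg}}$ in this topology. Joint continuity of $(\Delta,p)\mapsto r_\Delta(p)$ together with the uniqueness of its maximizer on each simplex then shows that $\Delta\mapsto\inc(\Delta)$ varies continuously in the shape of $\Delta$, and the same holds for every face. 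Thus $d(\inc(e),e')$ varies continuously in $\Delta$ and, being bounded below by $4\delta_n$ at $\Delta_{\mathrm{reg}}$, remains $>2\delta_n$ for $\varepsilon_n$ sufficiently small. The final sentence is then immediate: for distinct ridges $e,e'$ neither contains the other, so $d(\inc(e),\inc(e'))\geq d(\inc(e),e')>2\delta_n$, and the two open $\delta_n$-balls are disjoint.

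The main obstacle I expect is establishing continuity of $\inc$ across the passage from simplices with vertices in $\matH^n$ to simplices with some ideal vertices, since near $\Delta_{\mathrm{reg}}$ both regimes occur simultaneously and the hyperbolic metric blows up at $\partial\matH^n$. The cleanest workaround is to first show that for $\Delta$ close enough to $\Delta_{\mathrm{reg}}$ the incenter of any face stays inside a fixed compact region of $\matH^n$ (using the uniform bound $r_k$ on inradii together with the regularity of $\Delta_{\mathrm{reg}}$), and then to invoke uniform continuity of $r_\Delta$ on that compact region and uniqueness of the maximizer to upgrade subsequential compactness to genuine continuity of the incenter map.
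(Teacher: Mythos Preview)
The paper does not prove this lemma; it is quoted verbatim from \cite[Lemma~3.15]{FFM} and used as a black box. So there is no proof in the present paper to compare against.

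Your outline is the standard one and is essentially what the cited reference does: verify the strict inequality at the regular ideal simplex~$\Delta_{\mathrm{reg}}$, use that $\Delta_{\mathrm{reg}}$ is the \emph{unique} volume maximizer (up to isometry), and then appeal to continuity of the relevant quantities on a compact parameter space to propagate the inequality to all simplices with volume at least~$(1-\varepsilon_n)v_n$. The last sentence about disjoint $\delta_n$-balls is indeed immediate from the main inequality via the triangle inequality.

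The one place that needs care, and that you correctly isolate, is the continuity of $e\mapsto\inc(e)$ as the vertex tuple varies in~$(\overline{\matH^n})^{n+1}$, especially across the transition from finite to ideal vertices. Your proposed fix is the right one: first trap all incenters in a fixed compact $K\subset\matH^n$ (using the uniform bound~$r_k$ on inradii and the fact that near~$\Delta_{\mathrm{reg}}$ the faces are uniformly ``thick''), then use that on $K$ the map $(\Delta,p)\mapsto r_\Delta(p)$ is jointly continuous and its maximizer is unique. A minor quibble: ``fixing three of the vertices'' does not kill all of~$\Isom(\matH^n)$ when~$n\geq 3$; you want to pin down an ordered ideal $(n-1)$-simplex (or use any other explicit slice) to get a genuine compact cross-section. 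This is cosmetic and does not affect the argument.
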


Henceforth we fix constants $\vare_n>0$, $a_n>0$ and $\delta_n>0$ such that the statements of Lemmas~\ref{trigonometry:lemma} and~\ref{palle} hold.
We also set
$$
\eta_n=\vol(B(p,\delta_n))\ ,
$$
where $B(p,\delta_n)$ is any ball of radius $\delta_n$ in $\mathbb{H}^n$; notice 
that $\eta_n$ is independent of the point~$p$.

\subsection{Proof of Theorem~\ref{thm:hyp}}
Taking the infimum over all standard $\Gamma$-spaces~$X$ and over all
the fundamental cycles~$c\in
C_n(M,\alpha)$ shows that Theorem~\ref{higher:hyp:thm} will be a
consequence of the following:

\begin{thm}\label{quantitative:thm}
Let $n\in\mathbb{N}$, $n\geqslant 4$, and
$$
C_n:=\max \left\{1-\frac{\vare_n}{12}, 1-\frac{\eta_n}{3v_n}, 1-\frac{a_n\eta_n}{2v_n}\right\} < 1.$$
Then, for every oriented closed connected hyperbolic $n$-manifold $M$ with fundamental group $\Gamma$, every standard $\Gamma$-space 
$\alpha =
\Gamma \actson (X,\mu)$, and every fundamental cycle $c\in C_n(M,\alpha)$, the following inequality
holds:
$$
\|M\|\leqslant C_n  \cdot \ifsv{c}^\alpha .
$$
\end{thm}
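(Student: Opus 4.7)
The plan is to adapt the ridge-based strategy of Francaviglia, Frigerio, and Martelli \cite{FFM} — originally designed for stable integral simplicial volume — to the parametrized setting, using the locally finite fundamental cycles $c_x$ of $\widetilde M$ provided by Lemma~\ref{lem:lffundcycle} and averaging over $x\in X$ via equation~\eqref{coefficient:equation}. Since $\vol(M)=\sum_{i\in I}|\beta_i|\vola(\sigma_i)$ by \eqref{fundamental:eq} and $\ifsv{c}^\alpha=\sum_{i\in I}|\beta_i|$, the statement rearranges to
\[
\sum_{i\in I}|\beta_i|\vola(\sigma_i)\leq C_n\cdot v_n\cdot\sum_{i\in I}|\beta_i|.
\]
I would split $I=I_s\sqcup I_b$ according to whether $\vola(\sigma_i)<(1-\varepsilon_n)v_n$ or not. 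In the easy case $\sum_{i\in I_s}|\beta_i|\geq\tfrac{1}{12}\sum_i|\beta_i|$, a one-line estimate using $\vola(\sigma_i)\leq(1-\varepsilon_n)v_n$ on $I_s$ and $\vola(\sigma_i)\leq v_n$ on $I_b$ already yields the bound with the constant $1-\varepsilon_n/12$.

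In the remaining case, big simplices dominate the $\ell^1$-mass. Here I would examine $c_x$ for $\mu$-a.e.\ $x$, whose coefficients lie in $\{\pm 1\}$: because $c_x$ represents the locally finite fundamental class of $\widetilde M$, integrating around a small circle transverse to any ridge $e$ through $\inc(e)$ yields the angle identity
\[
\sum_{(\gamma',i')\in N(e)}\varepsilon_{i'}\cdot\alpha(\gamma'\sigma_{i'},e)=\pm 2\pi,
\]
where $N(e)$ is the set of pairs $(\gamma',i')$ appearing in $c_x$ for which $e$ is a ridge of $\gamma'\sigma_{i'}$. Combining this with Lemma~\ref{trigonometry:lemma} and $k_n\alpha_n<2\pi<(k_n+1)\alpha_n$, a short case analysis shows that at every ridge $e$ of a big simplex appearing in $c_x$ one of the following holds: \emph{(B1)} some $(\gamma',i')\in N(e)$ has $i'\in I_s$; or \emph{(B2)} $|N(e)|\geq k_n+1$ with non-trivial sign cancellation, and then the total unsigned wedge volume of these simplices inside $B(\inc(e),\delta_n)$ exceeds $(1+a_n)\eta_n$.

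To turn this pointwise-in-$x$ dichotomy into a quantitative bound I would use the disjointness of the balls $B(\inc(e),\delta_n)$ for distinct ridges of a common simplex (Lemma~\ref{palle}) and sum the contributions over all $\binom{n+1}{2}$ ridges of each big simplex. Type B2 ridges contribute at least $a_n\eta_n$ of wasted volume each, while type B1 ridges force small-simplex mass of total measure $\gtrsim\eta_n/v_n$ to sit nearby in $c_x$. Integrating the resulting per-simplex deficit in $x$ and applying \eqref{coefficient:equation} to replace $\mu$-measures of the events ``$(\gamma,i)$ appears in $c_x$'' by $|\beta_i|$, then combining with the running hypothesis $\sum_{i\in I_s}|\beta_i|<\tfrac{1}{12}\sum_i|\beta_i|$, should produce the remaining two constants $1-\eta_n/(3v_n)$ and $1-a_n\eta_n/(2v_n)$; the maximum of the three sub-case bounds is exactly $C_n$.

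The principal obstacle, absent in the stable integral setting of \cite{FFM}, is the measure-theoretic bookkeeping needed to pass from the pointwise ridge analysis on $c_x$ to a global estimate on $c$. As emphasized at the end of Section~\ref{subsec:hypsetup}, $c_x$ is not $\Gamma$-equivariant, the same geodesic simplex $\gamma\sigma_i$ may be encoded by several pairs with distinct $i$, and whether a ridge is of type B1 or B2 is a complicated function of $x$. I expect that organising the sums through the ``pair appears in $c_x$'' formalism of Section~\ref{subsec:hypsetup} and a Fubini-style rearrangement of the iterated sums over $(x,(\gamma,i),e)$ will reduce the geometric content to the local wedge estimates of \cite{FFM}, which can then be invoked essentially unchanged.
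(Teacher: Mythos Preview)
Your overall architecture matches the paper's: the easy case $\ifsv{c_s}^\alpha\geq\ifsv{c}^\alpha/12$ is exactly Proposition~\ref{obvious:estimate}, your B1/B2 dichotomy is the paper's $\NF(e)/F(e)$ split, and the bound on non-full ridges by small-simplex mass (your B1) is Proposition~\ref{eta:estimate}. The angle identity you invoke is indeed what makes the non-full case force a small simplex.

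The genuine gap is in the B2 branch. You assert that full ridges ``contribute at least $a_n\eta_n$ of wasted volume each'' and that a Fubini rearrangement will finish, but pointwise overlap in~$c_x$ does not by itself yield an inequality of the shape $\ifsv{c}^\alpha v_n - (\text{overlap}) \geq \vol(M)$. What is needed is that after excising the overlapping wedges from the positive-volume simplices and adding back a single copy of each ball~$B(\inc(\widetilde e),\delta_n)$, the resulting object still covers~$M$ with local degree~$\geq 1$. The paper makes this precise by introducing \emph{generalized chains} $\overline z=\overline z_{\rm pos}-\overline z_-+\overline z_+$ in $\R\otimes_{\R\Gamma}B(\mathbb{H}^n;\R)$ and proving the key estimate $\deg_p(\overline z)\geq 1$ for a.e.~$p\in M$ (Proposition~\ref{difficult}). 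The Fubini step you anticipate is only the easy Lemma~\ref{locdegree:lemma}, expressing $\deg_p$ as $\int_X\deg_{\widetilde p}(\,\cdot_x)\,d\mu$; the real work is the pointwise verification $\deg_{\widetilde p}(\widetilde z_x)\geq 1$, which hinges on the disjointness of ridge-balls within a single big simplex (Lemma~\ref{palle}) and a case split on whether $\widetilde z_{-,x}$ contributes at~$\widetilde p$. This is precisely where the non-equivariance of~$c_x$ bites, and the FFM argument cannot be ``invoked essentially unchanged''. You also omit the secondary case split that actually produces the two remaining constants: once the degree estimate yields the inequality of Proposition~\ref{final:estimate:prop}, the paper splits on whether $\sum_{e\in E}\mu(F(e))\leq\ifsv{c}^\alpha/2$, combining with Proposition~\ref{eta:estimate} in one sub-case and with the trivial bound $\sum_{i,\tau}\mu(F(\sigma_i,\tau,e))\geq(k_n+1)\mu(F(e))$ in the other (Proposition~\ref{stima2}).
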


The rest of this section is devoted to the proof of Theorem~\ref{quantitative:thm}. We will now keep the notation and assumptions introduced in Section~\ref{subsec:hypsetup}.

\begin{defn}
We say that the simplex $\sigma_i$ is \emph{big} if $\vola(\sigma_i)>(1-\vare_n)v_n$, and \emph{small} otherwise. We also set
$$
I_b=\{i\in I\, |\, \sigma_i\ \textrm{is\ big}\}\, ,\quad I_s=I\setminus I_b\ ,
$$
and
$$
c_b=\sum_{i\in I_b} \vare_i\chi_{A_i}\otimes \sigma_i\, ,\qquad
c_s=\sum_{i\in I_s} \vare_i\chi_{A_i}\otimes \sigma_i\ ,
$$
so that $c=c_b+c_s$. Also observe that we have
$\ifsv{c}^\alpha=\ifsv{c_b}^\alpha+\ifsv{c_s}^\alpha$. (Of course, $c_b$ and $c_s$ need not be cycles.)
\end{defn}

The following result says that a cycle is efficient only if its small simplices have a small weight.

\begin{prop}\label{obvious:estimate}
 Suppose that 
 $$
 \ifsv{c_s}^\alpha\geqslant \frac{\ifsv{c}^\alpha}{12}\ .
 $$
 Then 
 $$
 \ifsv{c}^\alpha\geqslant \frac{\|M\|}{1-\vare_n/12}\ .
 $$
\end{prop}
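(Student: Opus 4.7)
The plan is to exploit the identity $\vol(M)=\sum_{i\in I}|\beta_i|\vola(\sigma_i)$ from equation~\eqref{fundamental:eq}, split it according to the big/small partition, and use the uniform gap $\vola(\sigma_i)\leq(1-\vare_n)v_n$ on the small part. This is really just a convex-combination argument: small simplices contribute volume strictly less efficiently than the optimal $v_n$, so if they carry a fixed fraction of the total $\ell^1$-weight, then the total volume they can produce is strictly less than $v_n\cdot\ifsv{c}^\alpha$.

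More concretely, first I would split $\vol(M)$ as
\[
\vol(M)\;=\;\sum_{i\in I_b}|\beta_i|\vola(\sigma_i)\;+\;\sum_{i\in I_s}|\beta_i|\vola(\sigma_i).
\]
Since each $\sigma_i$ is straight, $|\vola(\sigma_i)|\leq v_n$ for all $i\in I$, while by definition $\vola(\sigma_i)\leq(1-\vare_n)v_n$ whenever $i\in I_s$. Bounding each term this way and regrouping gives
\[
\vol(M)\;\leq\;v_n\cdot\ifsv{c_b}^\alpha+(1-\vare_n)v_n\cdot\ifsv{c_s}^\alpha \;=\;v_n\cdot\ifsv{c}^\alpha-\vare_n v_n\cdot\ifsv{c_s}^\alpha.
\]

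Next I would plug in the hypothesis $\ifsv{c_s}^\alpha\geq\ifsv{c}^\alpha/12$ to obtain
\[
\vol(M)\;\leq\;\Bigl(1-\frac{\vare_n}{12}\Bigr)\,v_n\cdot\ifsv{c}^\alpha.
\]
Dividing by $v_n$ and using Gromov--Thurston's $\|M\|=\vol(M)/v_n$ yields the desired inequality $\ifsv{c}^\alpha\geq\|M\|/(1-\vare_n/12)$.

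There is no real obstacle here: the argument is a one-line estimate once the definitions of big/small and the equality $\vol(M)=\sum|\beta_i|\vola(\sigma_i)$ are in place. The only point worth double-checking is that the straightening step performed in Section~\ref{subsec:hypsetup} indeed allows us to assume each $\sigma_i$ is a straight geodesic simplex (so that $|\vola(\sigma_i)|\leq v_n$ holds unconditionally), and that replacing $c$ by its straightening does not increase $\ifsv{c}^\alpha$ --- both of which have already been recorded. The heavy lifting, which controls the \emph{big} simplices via the non-divisibility of $2\pi$ by $\alpha_n$, is deferred to the complementary case $\ifsv{c_s}^\alpha<\ifsv{c}^\alpha/12$ handled elsewhere in the section.
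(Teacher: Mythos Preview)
Your argument is correct and is essentially identical to the paper's own proof: both split $\vol(M)=\sum_{i\in I}|\beta_i|\vola(\sigma_i)$ into the big and small parts, bound the big part by $v_n$ and the small part by $(1-\vare_n)v_n$, and then insert the hypothesis $\ifsv{c_s}^\alpha\geq\ifsv{c}^\alpha/12$. The only cosmetic difference is that the paper divides by $v_n$ at the outset and works with $\|M\|$ directly, whereas you divide at the end.
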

\begin{proof}
 By \eqref{fundamental:eq} we have
\begin{align*}
 \|M\|&=\frac{ \vol(M)}{v_n}=\frac{\sum_{i\in I} |\beta_i| \vola(\sigma_i)}{v_n}\\ &=
 \frac{\sum_{i\in I_b} |\beta_i| \vola(\sigma_i)}{v_n}
 +\frac{\sum_{i\in I_s} |\beta_i| \vola(\sigma_i)}{v_n}\\ &
 \leq \frac{\sum_{i\in I_b} |\beta_i| v_n}{v_n}+
 \frac{\sum_{i\in I_s} |\beta_i| (1-\vare_n)v_n}{v_n}=
 \sum_{i\in I} |\beta_i|-\vare_n\sum_{i\in I_s} |\beta_i|\\ &=\ifsv{c}^\alpha-\vare_n\sum_{i\in I_s} |\beta_i|
 =\ifsv{c}^\alpha-\vare_n\ifsv{c_s}^\alpha\leq \ifsv{c}^\alpha \left(1-\frac{\vare_n}{12}\right)
 \ .
\qedhere
 \end{align*}
 \end{proof}
 
Therefore, we are now left to consider the case when $\ifsv{c_s}^\alpha$ is small.
 
\subsubsection{Full ridges}
In the sequel we work with $\Delta$-complexes. These are variations of 
simplicial complexes in which distinct simplices may share more than one face 
and faces of the same simplex may be identified~\cite[\S 2.1]{hatcher}. 
To the real cycle $c_\R$ there is associated the finite $\Delta$-complex $P$ which is defined as follows. 
We take one copy $\Delta_i$ of the $n$-dimensional standard simplex $\Delta^n$, and we identify the $(n-1)$-dimensional
faces $F_1\subseteq \Delta_{i_1},\ldots,F_s\subseteq \Delta_{i_s}$ of these simplices 
if $\pi\circ\sigma_{i_1}\circ \partial_{j_1}=\dots=\pi\circ\sigma_{i_s}\circ\partial_{j_s}$, where
$\partial_{j_l}$ is the usual affine  identification between the $(n-1)$-dimensional standard simplex
and the face~$F_{l}$ of $\Delta_{i_l}$. Observe that, after identifying $\Delta_j$ with the standard simplex
$\Delta^n$ for every $j$, the singular simplices $\pi\circ\sigma_i$ glue into a continuous map $$f\colon P\to M\ .$$

 Let now $e$ be a ridge of $P$, and observe that the set of vertices of $e$ is endowed with a well-defined order.
 We denote by $\widetilde{e}$ the unique lift
 of $f(e)$ to $\mathbb{H}^n$ having the first vertex in $D$, so $\widetilde{e}$ is naturally a singular $(n-2)$-simplex.
 We shall often denote by $\widetilde{e}$ also the image of such a simplex, since this will not produce any confusion.
 If $S$ is a finite set, we denote by $|S|$ the cardinality of $S$.
 For every $x\in X$ we set
 \begin{align*}
 \Omega(e,x) & =\{(\gamma,i)\, |\, (\gamma,i)\ \textrm{appears\ in}\ c_x\ \textrm{and}\ \widetilde{e}\ \textrm{is\ a\ ridge\ of}
\ \gamma\sigma_i\}\subseteq \Gamma\times I\ , 
\\
 \Omega_b(e,x) & =\{(\gamma,i)\in \Omega(e,x)\, |\, \sigma_i\ \textrm{is\ big}\}\ ,
 \\
 N(e,x) & = |\Omega(e,x)|\ ,
 \\ 
 N_b(e,x) & =|\Omega_b(e,x)|
 \end{align*}
 (observe that, since $c_x$ is locally finite, the sets $\Omega_b(e,x)\subseteq \Omega(e,x)$ are finite).
 It is immediate to check that the functions $N(e,\cdot)\colon X\to \mathbb{N}$,
 $N_b(e,\cdot)\colon X\to \mathbb{N}$ are measurable.
We now consider the following measurable subsets of $X$:
 \begin{align*}
   F(e) & =\{x\in X\, |\, N_b(e,x)\geqslant k_n+1\}\ .
 \\
   \NF(e) & =\{x\in X\, |\, 1\leq N(e,x)\, ,\ N_b(e,x)\leq k_n\}\ ,
 \end{align*}
and we
say that $e$ is \emph{$x$-full} if $x\in F(e)$, and \emph{$x$-non-full} if $x\in \NF(e)$.  
 Observe that, thanks to Lemma~\ref{trigonometry:lemma},
if $e$ is $x$-full, then the big simplices appearing in $c_x$ must produce some overlapping around
$\widetilde{e}$. On the other hand, if $e$ is not $x$-full, then at least one small simplex 
appears around $\widetilde{e}$ in $c_x$.

Henceforth we denote by $E$ the set of $(n-2)$-dimensional (simplicial) faces of $P$, and for every $i\in I$ we
denote by $E(\sigma_i)$ the set of $(n-2)$-dimensional (singular) faces of $\sigma_i$, i.e., the set of ridges of $\sigma_i$.

Let us now fix~$i\in I$. We want to measure the fact that $\sigma_i$ may produce
overlappings in $c_x$
 for some $x\in X$. To this aim, we fix a ridge $\tau$ of $\sigma_i$. If $e_\tau\in E$ is the $(n-2)$-dimensional face
corresponding to $\tau$, then there exists a unique element $\gamma_\tau\in\Gamma$ such that $\gamma_\tau\cdot \tau=\widetilde{e}_\tau$
and we set
\begin{align*}
F(\sigma_i,\tau)  =F(\sigma_i,\tau,e_\tau)&=\{  x\in F(e_\tau)\,|\, (\gamma_\tau,i)\in \Omega(e_\tau,x)\}\ \subseteq X\ ,\\
\NF(\sigma_i,\tau) =\NF(\sigma_i,\tau,e_\tau)&=\{  x\in \NF(e_\tau)\,|\, (\gamma_\tau,i)\in \Omega(e_\tau,x)\}\ \subseteq X\ .
\end{align*}
Observe that, according to our definitions,
$$
F(\sigma_i,\tau)=\gamma_\tau A_i\cap F(e_\tau)\, ,\quad
\NF(\sigma_i,\tau)=\gamma_\tau A_i\cap \NF(e_\tau)\ .
$$
On the other hand, for later convenience  we set
\begin{align*}
 F(\sigma_i,\tau,e)= \emptyset =
\NF(\sigma_i,\tau,e)\ ,
\end{align*}
for every $e\in E\setminus \{e_\tau\}$. So
$$
F(\sigma_i,\tau)=\bigcup_{e\in E} F(\sigma_i,\tau,e)\, ,\qquad
\NF(\sigma_i,\tau)=\bigcup_{e\in E} \NF(\sigma_i,\tau,e)\ .
$$

Of course, for every $x\in X$ such that $(\gamma_\tau,i)$ appears in $c_x$ we have that $e_\tau$ is either $x$-full or $x$-non-full, so 
by the very definitions (and by Equation~\eqref{coefficient:equation})
 we have that
$$
|\beta_i|=\mu(F(\sigma_i,\tau))+\mu(\NF(\sigma_i,\tau))\ .
$$
By summing over the ridges of $\sigma_i$ we then get
\begin{equation}\label{FNF2:eq}
\frac{n(n+1)}{2} |\beta_i|=\sum_{\tau\in E(\sigma_i)}\bigl(\mu(F(\sigma_i,\tau))+\mu(\NF(\sigma_i,\tau))\bigr)\ . 
\end{equation}

\begin{lemma}\label{first:double:counting}
We have
$$
\frac{n(n+1)}{2}\ifsv{c_b}^\alpha\leq \sum_{i\in I_b} \sum_{\tau\in E(\sigma_i)} \mu\bigl(F(\sigma_i,\tau)\bigr)+\sum_{e\in E} 5\mu\bigl(\NF(e)\bigr)\ .
$$
\end{lemma}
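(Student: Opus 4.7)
The plan is a straightforward double counting argument, using equation~\eqref{FNF2:eq} summed over $I_b$ and reorganizing the contributions from the non-full events by the ridge of $P$ they live on.

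First I would start from \eqref{FNF2:eq} summed over $i\in I_b$, which gives
\[
\frac{n(n+1)}{2}\ifsv{c_b}^\alpha
=\sum_{i\in I_b}\sum_{\tau\in E(\sigma_i)}\mu(F(\sigma_i,\tau))
+\sum_{i\in I_b}\sum_{\tau\in E(\sigma_i)}\mu(\NF(\sigma_i,\tau)).
\]
So it suffices to bound the second sum by $5\sum_{e\in E}\mu(\NF(e))$.

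The key observation is a reindexing: by the very definition, $\NF(\sigma_i,\tau)=\gamma_\tau A_i\cap \NF(e_\tau)$, so $x\in\NF(\sigma_i,\tau)$ exactly means $x\in \NF(e_\tau)$ and the pair $(\gamma_\tau,i)$ appears in $c_x$. Since each element of $\Omega_b(e,x)$ is of the form $(\gamma_\tau,i)$ for a unique pair $(i,\tau)$ with $i\in I_b$, $\tau\in E(\sigma_i)$, $e_\tau=e$, swapping the order of summation and integration yields
\[
\sum_{i\in I_b}\sum_{\tau\in E(\sigma_i)}\mu(\NF(\sigma_i,\tau))
=\int_X\sum_{e\in E}\chi_{\NF(e)}(x)\cdot N_b(e,x)\,d\mu(x)
=\sum_{e\in E}\int_{\NF(e)}N_b(e,x)\,d\mu(x).
\]

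Finally, by definition of $\NF(e)$ we have $N_b(e,x)\leq k_n$ for $x\in\NF(e)$, and $k_n\leq 5$ for all~$n\geq 4$ (with $k_n=5$ if $n=4$ and $k_n=4$ otherwise). Therefore
\[
\sum_{e\in E}\int_{\NF(e)}N_b(e,x)\,d\mu(x)\leq 5\sum_{e\in E}\mu(\NF(e)),
\]
which combined with the first display gives the desired inequality.

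There is no real obstacle here beyond bookkeeping: the only subtlety is to recognize that distinct pairs $(i,\tau)$ and $(j,\tau')$ with $e_\tau=e_{\tau'}=e$ can give rise to the \emph{same} element of $\Omega_b(e,x)$ only if $\gamma_\tau=\gamma_{\tau'}$ and $\sigma_i=\sigma_j$; however, because the simplices in $c$ are listed in reduced form as in~\eqref{elementary} (conditions (i)--(iv) of Section~\ref{subsec:hypsetup}), the coincidence $(\gamma_\tau,i)=(\gamma_{\tau'},j)$ forces $\tau=\tau'$, so the above count with $\Omega_b(e,x)$ is accurate and the bound $N_b(e,x)\le k_n\le 5$ applies directly.
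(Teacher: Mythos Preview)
Your argument is correct and follows essentially the same double-counting route as the paper: both start from~\eqref{FNF2:eq} summed over~$I_b$ and bound the non-full contributions using that at most $k_n\leq 5$ big simplices of~$c_x$ can appear around any $x$-non-full ridge~$\widetilde e$. The paper phrases this ridge-by-ridge, you phrase it as an integral; the content is identical.

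One minor quibble: your displayed reindexing identity should be an inequality~``$\leq$'' rather than an equality. The map $(i,\tau)\mapsto(\gamma_\tau,i)$ from pairs with $e_\tau=e$ into~$\Omega_b(e,x)$ is injective---which is all you need and all your last paragraph actually shows---but it need not be surjective: distinct ridges~$e,e'$ of the $\Delta$-complex~$P$ can satisfy~$f(e)=f(e')$ (the identifications defining~$P$ are only imposed at the $(n-1)$-level), hence~$\widetilde e=\widetilde{e'}$, so an element of~$\Omega_b(e,x)$ may come from a pair~$(i,\tau)$ with~$e_\tau=e'\neq e$. Since you only use the inequality afterwards, this is harmless for the conclusion.
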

\begin{proof}
From Equation~\eqref{FNF2:eq} we get
\begin{align*}
\frac{n(n+1)}{2}\ifsv{c_b}^\alpha & =
\frac{n(n+1)}{2}\sum_{i\in I_b} |\beta_i|\\ &=
\sum_{i\in I_b} \sum_{\tau\in E(\sigma_i)}\bigl(\mu(F(\sigma_i,\tau))+\mu(\NF(\sigma_i,\tau))\bigr)\ .
\end{align*}
Recall now that $k_n\leq 5$ for every $n\in\mathbb{N}$; so, by definition, around any \mbox{$x$-non-full} ridge 
$\widetilde{e}$ in $\mathbb{H}^n$ at most five big simplices of $c_x$ may appear. As a consequence for every $e\in E$ we have that
 $$
\sum_{i\in I_b} \sum_{\tau\in E(\sigma_i)}\mu(\NF(\sigma_i,\tau,e))\leq 
5\mu(\NF(e)) ,
$$
so
$$
\sum_{i\in I_b}\sum_{\tau\in E(\sigma_i)} \mu(\NF(\sigma_i,\tau))=\sum_{e\in E}
\sum_{i\in I_b}\sum_{\tau\in E(\sigma_i)} \mu(\NF(\sigma_i,\tau,e))\leq 5\sum_{e\in E}\mu(\NF(e))\ ,
$$
and this concludes the proof.
\end{proof}

The following proposition shows that, if the total weight  of big simplices of $c$ is large, then 
there must be many overlappings in $c_x$ for $x$ in a subset of large measure. 

\begin{prop}\label{eta:estimate}
Suppose that
$$
 \ifsv{c_s}^\alpha\leq \frac{\ifsv{c}^\alpha}{12}\ .
 $$
 Then 
 $$
\sum_{i\in I_b} \sum_{\tau\in E( \sigma_i)} \mu(F(\sigma_i,\tau))\geqslant 5\ifsv{c}^\alpha\ .
$$
\end{prop}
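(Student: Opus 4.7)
The plan is to use Lemma~\ref{first:double:counting} as the main inequality and to bound $\sum_{e\in E}\mu(\NF(e))$ linearly in terms of $\ifsv{c_s}^\alpha$. Rewriting Lemma~\ref{first:double:counting} gives
$$
\sum_{i\in I_b}\sum_{\tau\in E(\sigma_i)}\mu(F(\sigma_i,\tau))\;\geq\;\frac{n(n+1)}{2}\ifsv{c_b}^\alpha-5\sum_{e\in E}\mu(\NF(e)),
$$
so everything boils down to controlling the total measure of non-full ridges by the weight of small simplices.

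The key (and most delicate) step is the geometric claim: for $\mu$-a.e.\ $x\in X$ and every $e\in E$ with $x\in \NF(e)$, at least one pair $(\gamma,i)\in \Omega(e,x)$ with $i\in I_s$ appears, i.e., some small simplex of $c_x$ has $\widetilde e$ as a ridge. This is where I expect the main obstacle to lie, as it uses the cycle property of~$c_x$ in an essential way. By Lemma~\ref{lem:lffundcycle}, $c_x\in C_n^{\mathrm{lf}}(\widetilde M;\Z)$ is a locally finite fundamental cycle of $\mathbb H^n$, so the condition $\partial c_x=0$, restricted to the $2$-dimensional link of $\widetilde e$, is a degree-$1$ cycle condition on $S^1$; consequently the signed sum of dihedral angles of the $n$-simplices of $c_x$ incident to $\widetilde e$ must equal $2\pi$. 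If every such incident simplex were big, Lemma~\ref{trigonometry:lemma} would bound each of these angles by $\frac{2\pi}{k_n}(1-a_n)$, so together with $N_b(e,x)\leq k_n$ the signed sum would have absolute value at most $2\pi(1-a_n)<2\pi$, a contradiction. Hence a small simplex must be present.

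Given such a small pair $(\gamma,i)\in \Omega(e,x)$, set $\tau=\gamma^{-1}\widetilde e\in E(\sigma_i)$. Since $\widetilde e$ is the distinguished lift of $e$ with first vertex in $D$ and $\gamma_\tau$ is characterized by $\gamma_\tau\tau=\widetilde{e}_\tau$, the choices force $e=e_\tau$ and $\gamma=\gamma_\tau$, hence $x\in \NF(\sigma_i,\tau,e)$. Thus
$$
\chi_{\NF(e)}\leq \sum_{i\in I_s}\sum_{\tau\in E(\sigma_i)}\chi_{\NF(\sigma_i,\tau,e)},
$$
and integrating and summing over $e\in E$, using that each $(i,\tau)$ contributes to only the unique $e=e_\tau$, that $\mu(\NF(\sigma_i,\tau))\leq \mu(A_i)=|\beta_i|$, and that $|E(\sigma_i)|=\binom{n+1}{2}=\frac{n(n+1)}{2}$, yields
$$
\sum_{e\in E}\mu(\NF(e))\;\leq\;\sum_{i\in I_s}\sum_{\tau\in E(\sigma_i)}\mu(\NF(\sigma_i,\tau))\;\leq\;\frac{n(n+1)}{2}\ifsv{c_s}^\alpha.
$$

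To finish, I would substitute this bound into the opening inequality. The hypothesis $\ifsv{c_s}^\alpha\leq \ifsv{c}^\alpha/12$ gives $\ifsv{c_b}^\alpha=\ifsv{c}^\alpha-\ifsv{c_s}^\alpha\geq \frac{11}{12}\ifsv{c}^\alpha$, whence
$$
\sum_{i\in I_b}\sum_{\tau\in E(\sigma_i)}\mu(F(\sigma_i,\tau))\;\geq\;\frac{n(n+1)}{2}\Bigl(\frac{11}{12}-\frac{5}{12}\Bigr)\ifsv{c}^\alpha\;=\;\frac{n(n+1)}{4}\ifsv{c}^\alpha\;\geq\;5\ifsv{c}^\alpha,
$$
where the last step uses $n\geq 4$, so $n(n+1)/4\geq 5$. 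This is precisely the asserted inequality; everything beyond the link/cycle argument is routine bookkeeping.
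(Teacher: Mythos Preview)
Your proof is correct and follows essentially the same route as the paper: bound $\sum_{e\in E}\mu(\NF(e))$ by $\tfrac{n(n+1)}{2}\ifsv{c_s}^\alpha$ via the observation that a non-full ridge forces a small incident simplex, then combine with Lemma~\ref{first:double:counting} and the hypothesis to reach $\tfrac{n(n+1)}{4}\ifsv{c}^\alpha\geq 5\ifsv{c}^\alpha$. The only difference is cosmetic: the paper adds $\tfrac{n(n+1)}{2}\ifsv{c_s}^\alpha$ to both sides rather than using $\ifsv{c_b}^\alpha\geq \tfrac{11}{12}\ifsv{c}^\alpha$, and it justifies the key geometric claim by a bare appeal to Lemma~\ref{trigonometry:lemma} rather than your more explicit link/angle-sum argument (your phrase ``signed sum equals $2\pi$'' tacitly assumes no simplex of~$c_x$ contains the ridge in its interior, but this does not affect the argument, since such simplices would only shift the sum by a multiple of~$2\pi$ and the bound $|{\cdot}|<2\pi$ still yields the contradiction).
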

\begin{proof}

 Let $e\in E$ and take $x\in \NF(e)$. 
 By Lemma~\ref{trigonometry:lemma}, at least one small simplex must appear around
 $\widetilde{e}$ in $c_x$. In other words, we have that 
 $$
 \NF(e)\subseteq \bigcup_{i\in I_s} \bigcup_{\tau\in E( \sigma_i)} \NF(\sigma_i,\tau,e)\ ,
 $$
 so
 $$
 \mu(\NF(e))\leq \sum_{i\in I_s} \sum_{\tau\in E( \sigma_i)} \mu(\NF(\sigma_i,\tau,e))\ .
 $$
 By summing over $e\in E$ we obtain
 \begin{align*}
 \sum_{e\in E} \mu(\NF(e)) &\leq \sum_{e\in E} \sum_{i\in I_s} \sum_{\tau\in E(\sigma_i)} \mu(\NF(\sigma_i,\tau,e))
 =\sum_{i\in I_s} \sum_{\tau\in E(\sigma_i)} \mu(\NF(\sigma_i,\tau))
 \\ &\leq\frac{n(n+1)}{2} \sum_{i\in I_s}|\beta_i|=
 \frac{n(n+1)}{2} \ifsv{c_s}^\alpha\ ,
 \end{align*}
 where the second inequality follows from~\eqref{FNF2:eq}.
 Using this inequality and Lemma~\ref{first:double:counting}
we then get that
\begin{align*}
 \frac{n(n+1)}{2}\ifsv{c}^\alpha& =\frac{n(n+1)}{2}(\ifsv{c_b}^\alpha+\ifsv{c_s}^\alpha)\\ &
 \leq 
 \sum_{i\in I_b} \sum_{\tau\in E( \sigma_i)} \mu(F(\sigma_i,\tau))+\sum_{e\in E} 5\mu(\NF(e))
+\frac{n(n+1)}{2}\ifsv{c_s}^\alpha\\
 &\leq  \sum_{i\in I_b} \sum_{\tau\in E( \sigma_i)} \mu(F(\sigma_i,\tau))+5\frac{n(n+1)}{2}\ifsv{ c_s}^\alpha +\frac{n(n+1)}{2}\ifsv{ c_s}^\alpha\\ &=
 \sum_{i\in I_b} \sum_{\tau\in E( \sigma_i)} \mu(F(\sigma_i,\tau))+3{n(n+1)}\ifsv{c_s}^\alpha
 \\ &\leq  
  \sum_{i\in I_b} \sum_{\tau\in E( \sigma_i)} \mu(F(\sigma_i,\tau))
 +\frac{n(n+1)}{4}\ifsv{c}^\alpha\ ,
 \end{align*}
so
\[
\sum_{i\in I_b} \sum_{\tau\in E( \sigma_i)} \mu(F(\sigma_i,\tau))\geqslant \frac{n(n+1)}{4}\ifsv{c}^\alpha
\geqslant 5\ifsv{c}^\alpha\ .
\qedhere
\]
\end{proof}

\begin{rem}
Suppose that every simplex in $c$ has positive algebraic volume (this is very ``likely'' if $\ifsv{c}^\alpha$ is close to $\|M\|$). Then
for every $x\in X$ all the simplices in $c_x$ have positive algebraic volume. Since $c_x$
has integral coefficients, this readily implies that
no overlapping occurs in $c_x$. In particular, no full ridge appears in any $c_x$,
so $\mu(F(\sigma_i,\tau,e))=0$ for every $e\in E$, $i\in I$, $\tau\in E(\sigma_i)$.
By Proposition~\ref{eta:estimate}
and Proposition~\ref{obvious:estimate} we now have
$$
\ifsv{c_s}^\alpha\geqslant \frac{\ifsv{c}^\alpha}{12}\, \qquad \textrm{and} \qquad
\ifsv{c}^\alpha\geqslant \frac{\|M\|}{1-\vare_n/12}\ .
$$
This implies (a stronger version of) Theorem~\ref{quantitative:thm} in the case when the simplices appearing in $c$
 all have positive algebraic volume. 
\end{rem}

\subsection{Generalized chains and local degree}
Let $B(\mathbb{H}^n;\mathbb{R})$ be the real vector space having as a basis the set of measurable bounded subsets of $\mathbb{H}^n$, endowed with the obvious
action by $\Gamma$. 
A \emph{generalized chain} is an element of the space
$$
\mathbb{R}\otimes_{\mathbb{R}\Gamma} B(\mathbb{H}^n;\mathbb{R})\ .
$$
If $1\otimes B$ is an indecomposable element in $\mathbb{R}\otimes_{\mathbb{R}\Gamma} B(\mathbb{H}^n;\mathbb{R})$,
then
we define the \emph{local degree} $\deg_p(1\otimes B)\in\mathbb{N}$ of $1\otimes B$ at $p\in M$  
as the number of points in the intersection between $B$ and the preimage of $p$ via the universal covering map
$\pi\colon \mathbb{H}^n\to M$
(such a number is well-defined because $\Gamma$ permutes the fiber of $p$, and it is finite since $B$ is relatively compact and the fiber of $p$ is discrete). 
It is readily seen that the local degree extends to a well-defined linear map
$$
\deg_p\colon \mathbb{R}\otimes_{\mathbb{R}\Gamma} B(\mathbb{H}^n;\mathbb{R})\to \R\ .
$$
Let now $z=\sum_{\lambda\in\Lambda} a_\lambda\otimes B_\lambda$ be a fixed generalized chain. 
The number
$$
\vol(z)=\sum_{\lambda\in\Lambda} a_\lambda \vol(B_\lambda)
$$
is well-defined, and 
a  double-counting argument shows that
$$
\vol(z)=\int_M \deg_p(z)\, dp
$$
(observe that the map $\deg_{\cdot}(z)\colon M\to \R$ is measurable, since each $B_\lambda$ is Borel).
Let us set
$$
I_{\rm pos}=\{i\in I\, |\, \vola(\sigma_i)\geqslant 0\}\ ,
$$
so that $I_b\subseteq I_{\rm pos}$.
Since $c_\R$ is a real fundamental cycle and the boundary of the image of each $\sigma_i$ is a null set, we easily have that
$$\deg_p\biggl(\sum_{i\in I_{\rm pos}} |\beta_i|\im(\sigma_i)-\sum_{i\in I\setminus I_{\rm pos}} |\beta_i|\im(\sigma_i)\biggr) =1$$ 
for almost every $p\in M$.  

In particular, if we set
$$
\overline{z}_{\rm pos}=\sum_{i\in I_{\rm pos}} |\beta_i|\otimes \im(\sigma_i)\ ,
$$
then we deduce that
\begin{equation}\label{local:degree:pos}
 \deg_p (\overline{z}_{\rm pos}) \geqslant 1
\end{equation}
for almost every $p\in M$.
We now need to refine this estimate in order to show that overlappings in $c_x$ 
have a cost in terms of the $\ell^1$-norm of the fundamental cycle $c_\R$.

Take $e\in E$. If $F(e)\neq \emptyset$, then $\widetilde{e}\subseteq \mathbb{H}^n$ is a ridge of a 
big geodesic simplex. Since big simplices are nondegenerate, also $\widetilde{e}$ is nondegenerate. In particular, the incenter
$\inc(\widetilde{e})$ is well-defined. We denote by $\inc(e)\in M$ the image of~$\inc(\widetilde{e})$ in $M$.

Recall that, if $\sigma_i$ is a big simplex and $\tau\in E(\sigma_i)$, then 
there exists a unique element $\gamma_\tau\in \Gamma$ such that $\gamma_\tau\cdot \tau=\widetilde{e}_\tau$, where $\widetilde{e}_\tau$ is the preferred
lift of the $(n-2)$-dimensional face $e_\tau\in E$ corresponding to $\tau$.
We consider the following generalized chains:
$$
\overline{z}_-=\sum_{i\in I_b} \sum_{\tau\in E(\sigma_i)} \mu(F(\sigma_i,\tau)) \otimes \left((\gamma_\tau\cdot \im(\sigma_i))\cap B(\inc(\widetilde{e}_\tau),\delta_n)\right)
$$
(observe that $\gamma_\tau$ and $\inc(\widetilde{e}_\tau)$ are defined whenever $\mu(F(\sigma_i,\tau))\neq 0$),
$$
\overline{z}_+=\sum_{e\in E} \mu(F(e)) \otimes B(\inc(\widetilde{e}),\delta_n)
$$
(again, $\inc(\widetilde{e})$ is defined whenever $\mu(F(e))\neq 0$), and we finally set
\begin{equation}\label{eq: cycle}
\overline{z}=\overline{z}_{\rm pos}-\overline{z}_-+\overline{z}_+\ .
\end{equation}
Roughly speaking, the generalized chain $\overline{z}$ is obtained by removing from the big simplices in $\overline{z}_{\rm pos}$ the portions that cover small balls around
the incenters of full ridges, and adding back (a suitable weighted sum of) such small balls. We will show that the degree of $\overline{z}$ at almost every point of $M$
is still at least~$1$. However, when there are many overlappings, the absolute value of the volume of $\overline{z}_-$ is substantially bigger than
the volume of $\overline{z}_+$. These two facts imply that the volume of $\overline{z}_{\rm pos}$ must be considerably bigger than~$\vol(M)$. Therefore, the sum of the coefficients
appearing in $\overline{z}_{\rm pos}$ must be bigger than $\|M\|$, and this implies in turn the desired bound from below for $\ifsv{c}^\alpha$.

In order to pursue this strategy, we need to introduce the notion of \emph{generalized parametrized chain}.
Let $B(\mathbb{H}^n;\mathbb{Z})$ be the free abelian group having as a basis the set of measurable bounded subsets of $\mathbb{H}^n$, endowed with the obvious
action by $\Gamma$. 
By definition, 
a generalized parametrized chain is an element of the space
$$
L^\infty(X,\mu,\mathbb{Z})\otimes_{\mathbb{Z}\Gamma} B(\mathbb{H}^n;\mathbb{Z})\ .
$$
The integration  map $\int_X\colon L^\infty(X,\mu,\mathbb{Z})\to\R$ and the inclusion
$i\colon B(\mathbb{H}^n;\mathbb{Z})\to B(\mathbb{H}^n;\mathbb{R})$ induce a well-defined homomorphism
$$
\theta:= \int_X \otimes\, i \colon 
L^\infty(X,\mu,\mathbb{Z})\otimes_{\mathbb{Z}\Gamma} B(\mathbb{H}^n;\mathbb{Z})\to \R\otimes_{\mathbb{R}\Gamma} B(\mathbb{H}^n;\mathbb{R})\ .
$$
If $\widetilde{z}$ is a generalized parametrized chain and $p\in M$, then we set
$$
\deg_p (\widetilde{z})=\deg_p (\theta(\widetilde{z}))\ .
$$
Moreover, if $\widetilde{z}=\sum_{j\in J} f_j\otimes B_j$, then
for every $x\in X$ we can consider the locally finite formal sum
$$
\widetilde{z}_x=\sum_{j\in J} \sum_{\gamma\in\Gamma}  f_j(\gamma^{-1}x) \gamma(B_j)\ ,
$$
and we may define the local degree of such a sum at $\widetilde{p}\in \mathbb{H}^n$ by setting
$$
\deg_{\widetilde{p}}(\widetilde{z}_x)=\sum_{j\in J} \sum_{\gamma\in\Gamma}  f_j(\gamma^{-1}x) \chi_{\gamma(B_j)}(\widetilde{p})\ .
$$

\begin{lemma}\label{locdegree:lemma}
 Let $\widetilde{z}$ be a generalized parametrized chain. Then
$$
 \deg_{p} (\widetilde{z})=\int_{X} \deg_{\widetilde{p}} (\widetilde{z}_x)\, d\mu(x) \in  \mathbb{R}
$$
for every $p\in M$ and $\widetilde{p}\in\mathbb{H}^n$ such that $\pi(\widetilde{p})=p$.
\end{lemma}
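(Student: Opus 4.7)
The plan is to verify the identity directly by reducing to elementary tensors and unfolding both sides. Since every operation involved is $\Z$-linear in $\widetilde z$, it suffices to establish the formula for a single simple tensor $\widetilde z = f\otimes B$ with $f\in L^\infty(X,\mu,\Z)$ and $B\subseteq\mathbb H^n$ a bounded Borel set; well-definedness under the tensor-product relation over $\Z\Gamma$ can be handled separately at the end, using $\Gamma$-invariance of $\mu$ and a change of summation variable.

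Unfolding the definitions of $\theta$ and of $\deg_p$ on elementary chains, the left-hand side is
\[
\deg_p\bigl(\theta(f\otimes B)\bigr)
= \Bigl(\int_X f\,d\mu\Bigr)\cdot \bigl|B\cap\pi^{-1}(p)\bigr|.
\]
For the right-hand side, by definition $\deg_{\widetilde p}(\widetilde z_x)=\sum_{\gamma\in\Gamma}f(\gamma^{-1}x)\,\chi_{\gamma(B)}(\widetilde p)$. The crucial convergence point is that $\Gamma$ acts properly discontinuously on $\mathbb H^n$ and $B$ is bounded, so the set $\{\gamma\in\Gamma:\widetilde p\in\gamma(B)\}$ is finite; hence this sum is a finite sum for every $x$ and can be interchanged with the integral over $X$. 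Using the $\Gamma$-invariance of $\mu$ to rewrite $\int_X f(\gamma^{-1}x)\,d\mu(x)=\int_X f\,d\mu$, one obtains
\[
\int_X \deg_{\widetilde p}(\widetilde z_x)\,d\mu(x)
= \Bigl(\int_X f\,d\mu\Bigr)\cdot \sum_{\gamma\in\Gamma}\chi_{\gamma(B)}(\widetilde p).
\]

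The proof then concludes with the combinatorial identity $\sum_{\gamma\in\Gamma}\chi_{\gamma(B)}(\widetilde p)=|B\cap\pi^{-1}(p)|$: the sum counts $\gamma\in\Gamma$ with $\gamma^{-1}\widetilde p\in B$, which under $\gamma\mapsto\gamma^{-1}$ and the freeness of the deck action becomes the number of lifts of $p$ lying in $B$. Matching the two displayed expressions gives the claim. The main point to double-check is compatibility with the relation $f\cdot g\otimes B = f\otimes g\cdot B$ in $L^\infty(X,\mu,\Z)\otimes_{\Z\Gamma}B(\mathbb H^n;\Z)$, but both sides of the lemma are manifestly invariant under reindexing $\gamma\mapsto\gamma g$, so this amounts to the same $\Gamma$-invariance of $\mu$ already invoked. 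I do not expect a substantive obstacle: the only delicate aspect is the interchange of integration and summation over $\Gamma$, which is justified by proper discontinuity and the boundedness of $B$.
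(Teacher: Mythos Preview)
Your proof is correct and follows essentially the same route as the paper's: reduce by linearity to an elementary tensor, compute both sides explicitly, and match them via the bijection between $\{\gamma\in\Gamma:\gamma^{-1}\widetilde p\in B\}$ and $B\cap\pi^{-1}(p)$ coming from the free, transitive action of~$\Gamma$ on fibres. The only cosmetic difference is that the paper reduces one step further to tensors of the form $\chi_A\otimes B$ (and enumerates the finitely many relevant $\gamma_1,\dots,\gamma_k$ explicitly), whereas you keep a general $f\in L^\infty(X,\mu,\Z)$ and invoke $\Gamma$-invariance of~$\mu$ directly; both choices lead to the same computation.
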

\begin{proof}
By linearity, we may assume that $\widetilde{z}=\chi_A\otimes B$ for some measurable subsets $A\subseteq X$, $B\subseteq \mathbb{H}^n$. 
Moreover, if $k=|(B\cap \pi^{-1}(p))|$, then there exist elements $\gamma_1,\ldots,\gamma_k$ in $\Gamma$ such that 
$\chi_{\gamma(B)}(\widetilde{p})=\chi_B(\gamma^{-1}(\widetilde{p}))=1$ if and only if $\gamma=\gamma_h$ for some $h\in\{1,\ldots,k\}$. Therefore,
$$
\deg_{\widetilde{p}}(\widetilde{z}_x)=\sum_{\gamma\in\Gamma}  \chi_A(\gamma^{-1}x) \chi_{\gamma(B)}(\widetilde{p})=
\sum_{h=1}^k \chi_A(\gamma_h^{-1}x)=\sum_{h=1}^k \chi_{\gamma_h\cdot A}(x)\ ,
$$
and
\[
\int_X \deg_{\widetilde{p}}(\widetilde{z}_x)\, d\mu(x)= \sum_{h=1}^k \int_X \chi_{\gamma_h\cdot A}(x)\, d\mu(x)=k\mu(A)=\deg_p(\widetilde{z})\ .
\qedhere
\]
\end{proof}

Let us now return to the study of our parametrized cycle $c=\sum_{i\in I} \vare_i\chi_{A_i}\otimes \sigma_i$. From now on $\bar z$ denotes the 
generalized chain in~\eqref{eq: cycle}. 

\begin{prop}\label{difficult}
 
 For almost every $p\in M$, we have
 $
 \deg_p (\overline{z})\geqslant 1
 $.
\end{prop}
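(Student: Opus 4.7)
The plan is to lift $\overline{z}$ to the parametrized setting and apply Lemma~\ref{locdegree:lemma}, reducing the claim to a fiberwise count. Concretely, define
\[ \widetilde{z}_{\rm pos}=\sum_{i\in I_{\rm pos}}\chi_{A_i}\otimes \im(\sigma_i), \]
\[ \widetilde{z}_-=\sum_{i\in I_b}\sum_{\tau\in E(\sigma_i)}\chi_{F(\sigma_i,\tau)}\otimes \bigl((\gamma_\tau\cdot\im\sigma_i)\cap B(\inc\widetilde{e}_\tau,\delta_n)\bigr), \]
\[ \widetilde{z}_+=\sum_{e\in E}\chi_{F(e)}\otimes B(\inc\widetilde{e},\delta_n), \]
and set $\widetilde{z}=\widetilde{z}_{\rm pos}-\widetilde{z}_-+\widetilde{z}_+$. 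Since $\int_X\chi_{A_i}\,d\mu=\mu(A_i)=|\beta_i|$ and likewise for the other characteristic functions, we have $\theta(\widetilde{z})=\overline{z}$. So by Lemma~\ref{locdegree:lemma} it is enough to show that $\deg_{\widetilde{p}}(\widetilde{z}_x)\geq 1$ for a.e.\ $(x,\widetilde{p})\in X\times\mathbb{H}^n$.

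Fix such a generic pair. A direct unpacking yields three combinatorial quantities. Let $P:=\deg_{\widetilde{p}}((\widetilde{z}_{\rm pos})_x)$ count the pairs $(\gamma',i)$ with $i\in I_{\rm pos}$, $(\gamma',i)$ appearing in $c_x$, and $\widetilde{p}\in\gamma'\im(\sigma_i)$; write $N_b$ for the analogous count with $i\in I_b$. Substituting $\gamma'=\gamma\gamma_\tau$ and using $F(\sigma_i,\tau)=\gamma_\tau A_i\cap F(e_\tau)$, one checks that $\deg_{\widetilde{p}}((\widetilde{z}_-)_x)$ counts triples $((\gamma',i),\tau)$ such that $(\gamma',i)\in B(\widetilde{p})$ (a big simplex of $c_x$ covering $\widetilde{p}$), $\tau\in E(\sigma_i)$, the ridge $R=\gamma'\tau$ is full in $c_x$, and $\widetilde{p}\in B(\inc R,\delta_n)$. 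Similarly $\deg_{\widetilde{p}}((\widetilde{z}_+)_x)=|R_0|$, where $R_0$ is the set of full ridges $R$ in $c_x$ with $\widetilde{p}\in B(\inc R,\delta_n)$.

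The crucial geometric input is Lemma~\ref{palle}: for any big simplex, the balls of radius $\delta_n$ around the incenters of its distinct ridges are pairwise disjoint. Applied to each big simplex $S=\gamma'\sigma_i$ in $B(\widetilde{p})$, this means $\widetilde{p}$ can lie in $B(\inc R,\delta_n)$ for at most one ridge $R$ of $S$, so each such $S$ contributes to the triple-count at most once. Hence $\deg_{\widetilde{p}}((\widetilde{z}_-)_x)\leq N_b$. Moreover, $I_b\subseteq I_{\rm pos}$ gives $P\geq N_b$, and since $c_x$ is a locally finite fundamental cycle of $\mathbb{H}^n$ (Lemma~\ref{lem:lffundcycle}) with all $\sigma_i$ straight, the signed local degree of $c_x$ at generic $\widetilde{p}$ equals $1$, yielding $P\geq 1$. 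Combining,
\[ \deg_{\widetilde{p}}(\widetilde{z}_x)=P-\deg_{\widetilde{p}}((\widetilde{z}_-)_x)+|R_0|\geq (P-N_b)+|R_0|\geq |R_0|. \]
If $|R_0|\geq 1$ we are done; if $|R_0|=0$, then $\deg_{\widetilde{p}}((\widetilde{z}_-)_x)=0$ as well, because every contributing triple requires $\widetilde{p}\in B(\inc R,\delta_n)$ for some full $R$, so $\deg_{\widetilde{p}}(\widetilde{z}_x)=P\geq 1$. Integrating via Lemma~\ref{locdegree:lemma} gives $\deg_p(\overline{z})\geq 1$ for a.e.\ $p\in M$.

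The main obstacle I anticipate is the bookkeeping in the reparametrization $\gamma'=\gamma\gamma_\tau$: one must track carefully how the $\Gamma$-action on $F(e_\tau)$ interacts with the appearance condition for $(\gamma',i)$ in $c_x$, and verify that the full-ridge condition on $\gamma'\tau$ in $c_x$ is precisely what the shifted argument of $\chi_{F(e_\tau)}$ encodes. Once this change of variables is cleanly in place, the geometric content reduces to the one-simplex/one-ridge matching supplied by Lemma~\ref{palle}, together with the fundamental-cycle identity $P-N(\widetilde{p})=1$.
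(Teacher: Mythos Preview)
Your proposal is correct and follows essentially the same approach as the paper's proof: you lift $\overline z$ to a generalized parametrized chain~$\widetilde z$, reduce via Lemma~\ref{locdegree:lemma} to the pointwise inequality $\deg_{\widetilde p}(\widetilde z_x)\geq 1$, perform the reparametrization $\gamma'=\gamma\gamma_\tau$ to interpret $\deg_{\widetilde p}((\widetilde z_-)_x)$ as a count of big simplices of~$c_x$ covering~$\widetilde p$ near a full ridge, invoke Lemma~\ref{palle} to bound this by~$N_b\leq P$, and finish with the same dichotomy on whether some full ridge has $\widetilde p$ in its $\delta_n$-ball. The paper carries out exactly this argument, only with the bookkeeping written out explicitly (its equation labeled~\eqref{bigsum} is precisely the change of variables you flag as the main obstacle).
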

\begin{proof}

We set
\begin{align*}
\widetilde{z}_{\rm pos}&=\sum_{i\in I_{\rm pos}} \chi_{A_i}\otimes \im(\sigma_i)\ ,\\
\widetilde{z}_-&=\sum_{i\in I_b} \sum_{\tau\in E(\sigma_i)} \chi_{F(\sigma_i,\tau)}\otimes\left((\gamma_\tau\cdot \im(\sigma_i))\cap B(\inc(\widetilde{e}_\tau),\delta_n)\right)\ ,\\
\widetilde{z}_+&=\sum_{e\in E} \chi_{F(e)}\otimes B(\inc(\widetilde{e}),\delta_n)\ ,
\end{align*}
and
$$
\widetilde{z}=\widetilde{z}_{\rm pos}-\widetilde{z}_-+\widetilde{z}_+\ .
$$
It follows from our choices that 
$\theta(\widetilde{z})=\overline{z}$, so 
$\deg_p (\widetilde{z})=\deg_p(\overline{z})$ for every point~$p\in M$. Therefore, by Lemma~\ref{locdegree:lemma} we are left to show that
$$
\deg_{\widetilde{p}} (\widetilde{z}_x)\geqslant 1
$$
for almost every $\widetilde{p}\in \mathbb{H}^n$ and almost every $x\in X$.

Let $i\in I_b$. Recall that 
$$
F(\sigma_i,\tau)=\gamma_\tau A_i\cap F(e_\tau)\ .
$$ 
Therefore, for every $\gamma\in\Gamma$ and every $x\in X$ we have 
$$
\chi_{F(\sigma_i,\tau)}(\gamma^{-1}(x))=\chi_{\gamma F(\sigma_i,\tau)}(x)=
\chi_{\gamma\gamma_\tau A_i\cap \gamma F(e_\tau)}(x)\ .
$$
Then, for every $x\in X$ and $\widetilde{p}\in \mathbb{H}^n$ we have
\begin{equation}\label{bigsum}
\begin{array}{lll}
 & \displaystyle\sum_{\gamma\in\Gamma}  \sum_{\tau\in E(\sigma_i)} \chi_{F(\sigma_i,\tau)}(\gamma^{-1}(x))\chi_{\gamma\gamma_\tau(\im(\sigma_i))\cap 
 \gamma B(\inc(\widetilde{e}_\tau),\delta_n)} 
 (\widetilde{p})\\ 
 = &\displaystyle
  \sum_{\gamma\in\Gamma}  \sum_{\tau\in E(\sigma_i)} \chi_{\gamma\gamma_\tau A_i\cap \gamma F(e_\tau)}(x)
  \chi_{\gamma\gamma_\tau(\im(\sigma_i))\cap 
 \gamma B(\inc(\widetilde{e}_\tau),\delta_n)} 
 (\widetilde{p})\\ 
  = &\displaystyle
 \sum_{\gamma\in\Gamma} \sum_{\tau\in E(\sigma_i)} \chi_{\gamma A_i\cap \gamma\gamma_\tau^{-1} F(e_\tau)}(x)
  \chi_{\gamma(\im(\sigma_i))\cap 
 \gamma\gamma_\tau^{-1} B(\inc(\widetilde{e}_\tau),\delta_n)} 
 (\widetilde{p})\\
  = &\displaystyle
 \sum_{\gamma\in\Gamma}  \sum_{\tau\in E(\sigma_i)} \chi_{\gamma A_i\cap \gamma\gamma_\tau^{-1} F(e_\tau)}(x)
  \chi_{\gamma(\im(\sigma_i))\cap 
 \gamma B(\inc(\gamma_\tau^{-1}\widetilde{e}_\tau),\delta_n)} 
 (\widetilde{p})\ .
 \end{array}
\end{equation}
Recall now that, since $\sigma_i$ is big, the $\delta_n$-balls centered at the incenters of the ridges of $\sigma_i$ are disjoint. As a consequence
we have that 
\begin{align*}
\chi_{\gamma A_i}(x) \chi_{\gamma\im(\sigma_i)}(\widetilde{p}) & 
\geqslant \sum_{\tau\in E(\sigma_i)} \chi_{\gamma A_i}(x)\chi_{\gamma(\im(\sigma_i))\cap 
 \gamma B(\inc(\gamma_\tau^{-1}\widetilde{e}_\tau),\delta_n)}(\widetilde{p})
\\ & \geqslant \sum_{\tau\in E(\sigma_i)} \chi_{\gamma A_i\cap \gamma\gamma_\tau^{-1} F(e_\tau)}(x)\chi_{\gamma(\im(\sigma_i))\cap 
 \gamma B(\inc(\gamma_\tau^{-1}\widetilde{e}_\tau),\delta_n)}(\widetilde{p})\ . 
\end{align*}
By summing over $\gamma\in\Gamma$ and using~\eqref{bigsum} we obtain 
\begin{align*}
& \sum_{\gamma\in\Gamma} \chi_{\gamma A_i}(x) \chi_{\gamma\im(\sigma_i)}(\widetilde{p})\\ \geqslant &
\sum_{\gamma\in\Gamma}  \sum_{\tau\in E(\sigma_i)} \chi_{F(\sigma_i,\tau)}(\gamma^{-1}(x))\chi_{\gamma\gamma_\tau(\im(\sigma_i))\cap 
 \gamma B(\inc(\widetilde{e}_\tau),\delta_n)} 
 (\widetilde{p})\ .
\end{align*}
Finally, by summing over $i\in I_{\rm pos}$ we get
\begin{align*}
& \sum_{i\in I_{\rm pos}} \sum_{\gamma\in\Gamma} \chi_{\gamma A_i}(x) \chi_{\gamma\im(\sigma_i)}(\widetilde{p})\\ \geqslant 
& \sum_{i\in I_b} \sum_{\gamma\in\Gamma} \chi_{\gamma A_i}(x) \chi_{\gamma\im(\sigma_i)}(\widetilde{p})\\ \geqslant &
\sum_{i\in I_b} \sum_{\gamma\in\Gamma}  \sum_{\tau\in E(\sigma_i)} \chi_{F(\sigma_i,\tau)}(\gamma^{-1}(x))\chi_{\gamma\gamma_\tau(\im(\sigma_i))\cap 
 \gamma B(\inc(\widetilde{e}_\tau),\delta_n)} 
 (\widetilde{p})\ ,
\end{align*}
which just means that
$$
\deg_{\widetilde{p}} (\widetilde{z}_{{\rm pos},x})\geqslant \deg_{\widetilde{p}}(\widetilde{z}_{-,x})\ .
$$
Therefore, $\deg_{\widetilde{p}}(\widetilde{z}_x)\geqslant 0$ for every $x\in X$, $\widetilde{p}\in\mathbb{H}^n$. In order to conclude it is sufficient
to show that, if $x$ is such that $c_x$ is a fundamental cycle for $\mathbb{H}^n$, then $\deg_{\widetilde{p}}(\widetilde{z}_x)\geqslant 1$ for almost every $\widetilde{p}\in\mathbb{H}^n$. 
Let us fix such an $x\in X$. Since $c_x$ is a fundamental cycle and the boundary of the image of each (translate of) $\sigma_i$ is a null set, 
for almost every $\widetilde{p}\in\mathbb{H}^n$
we have
$$
\sum_{i\in I_{\rm pos}}\sum_{\gamma\in \Gamma} \chi_{A_i}(\gamma^{-1}x)\chi_{\im(\gamma\sigma_i)}(\widetilde{p})-
\sum_{i\in I\setminus I_{\rm pos}}\sum_{\gamma\in \Gamma} \chi_{A_i}(\gamma^{-1}x)\chi_{\im(\gamma\sigma_i)}(\widetilde{p})=1\ ,
$$
so
$$
\deg_{\widetilde{p}}(\widetilde{z}_{{\rm pos},x})=\sum_{i\in I_{\rm pos}}\sum_{\gamma\in \Gamma} \chi_{A_i}(\gamma^{-1}x)\chi_{\im(\gamma\sigma_i)}(\widetilde{p})\geqslant 1\ .
$$
 Therefore,
if $\deg_{\widetilde{p}}(\widetilde{z}_{-,x})=0$, then 
$\deg_{\widetilde{p}}(\widetilde{z}_x)\geqslant \deg_{\widetilde{p}}(\widetilde{z}_{{\rm pos},x})\geqslant 1$, and we are done.
Otherwise, 
we have
$\deg_{\widetilde{p}}(\widetilde{z}_{-,x})\geqslant 1$. Therefore, there exist $i\in I_{\rm pos}$, $\tau\in E(\sigma_i)$ and $\gamma\in\Gamma$ such that
$$
\chi_{F(\sigma_i,\tau)}(\gamma^{-1}(x))\chi_{\gamma\gamma_\tau(\im(\sigma_i))\cap 
 \gamma B(\inc(\widetilde{e}_\tau),\delta_n)} 
 (\widetilde{p})=1\ ,
 $$
 i.e.,
 $$
 x\in \gamma F(\sigma_i,\tau,e_\tau)\subseteq \gamma F(e_\tau) \qquad \textrm{and}\qquad 
 \widetilde{p}\in \gamma B(\inc(\widetilde{e}_\tau),\delta_n)\ .
 $$
This implies at once that
$$
\deg_{\widetilde{p}}(\widetilde{z}_{+,x})\geqslant 1\ ,
$$
so 
$$
\deg_{\widetilde{p}}(\widetilde{z}_x)=\left(\deg_{\widetilde{p}}(\widetilde{z}_{{\rm pos},x})- \deg_{\widetilde{p}}(\widetilde{z}_{-,x})\right)
+ \deg_{\widetilde{p}}(\widetilde{z}_{+,x})\geqslant 0+1\geqslant 1\ ,
$$
whence the conclusion.
\end{proof}

\begin{prop}\label{final:estimate:prop}
We have
\begin{equation}\label{final:estimate}
\ifsv{c}^\alpha v_n-\frac{1+a_n}{k_n+1}\eta_n\sum_{i\in I_b} \sum_{\tau\in E(\sigma_i)} \mu(F(\sigma_i,\tau))
+\eta_n \sum_{e\in E} \mu(F(e))\geqslant \vol(M)\ .
\end{equation}
\end{prop}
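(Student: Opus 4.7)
The plan is to integrate the pointwise degree bound of Proposition~\ref{difficult} against the volume form of~$M$ and estimate each of the three volumes~$\vol(\overline{z}_{\rm pos})$, $\vol(\overline{z}_-)$, $\vol(\overline{z}_+)$ separately. By the formula $\vol(z)=\int_M \deg_p(z)\,dp$ recalled just before the definition of~$\overline{z}$ and by Proposition~\ref{difficult}, we obtain
\[
  \vol(\overline{z}_{\rm pos})-\vol(\overline{z}_-)+\vol(\overline{z}_+)\ =\ \vol(\overline{z})\ \geqslant\ \vol(M).
\]
So it suffices to bound $\vol(\overline{z}_{\rm pos})$ from above by $\ifsv{c}^\alpha v_n$, to bound $\vol(\overline{z}_-)$ from below by $\frac{1+a_n}{k_n+1}\eta_n\sum_{i\in I_b}\sum_{\tau\in E(\sigma_i)}\mu(F(\sigma_i,\tau))$, and to compute $\vol(\overline{z}_+)$ exactly as $\eta_n\sum_{e\in E}\mu(F(e))$.

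First I would handle the two easy terms. The generalized chain~$\overline{z}_+$ is, by construction, a weighted sum of balls of radius~$\delta_n$, each of volume~$\eta_n$, so $\vol(\overline{z}_+)=\eta_n\sum_{e\in E}\mu(F(e))$ immediately. For~$\overline{z}_{\rm pos}$, each straight simplex~$\sigma_i$ with $i\in I_{\rm pos}$ satisfies $\vol(\im(\sigma_i))=|\vola(\sigma_i)|=\vola(\sigma_i)\leqslant v_n$, hence
\[
  \vol(\overline{z}_{\rm pos})\ =\ \sum_{i\in I_{\rm pos}}|\beta_i|\cdot\vol(\im(\sigma_i))\ \leqslant\ v_n\sum_{i\in I_{\rm pos}}|\beta_i|\ \leqslant\ \ifsv{c}^\alpha\cdot v_n.
\]

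The heart of the argument, and the step I expect to be the main obstacle, is the lower bound on~$\vol(\overline{z}_-)$. The contribution of each pair $(i,\tau)$ with $i\in I_b$ is
\[
  \mu(F(\sigma_i,\tau))\cdot\vol\bigl((\gamma_\tau\cdot\im(\sigma_i))\cap B(\inc(\widetilde{e}_\tau),\delta_n)\bigr),
\]
so I must show that this geometric volume is at least $\frac{1+a_n}{k_n+1}\eta_n$. Since $\sigma_i$ is big, $\gamma_\tau\sigma_i$ is a non-degenerate geodesic $n$-simplex with~$\vol\geqslant(1-\varepsilon_n)v_n$ and~$\widetilde{e}_\tau$ is one of its ridges. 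By Lemma~\ref{palle}, the ball~$B(\inc(\widetilde{e}_\tau),\delta_n)$ is disjoint from every face of~$\gamma_\tau\sigma_i$ not containing~$\widetilde{e}_\tau$. Consequently, the intersection of the ball with the simplex is exactly the portion of the ball cut out by the two hyperplanes carrying the two $(n-1)$-faces incident to~$\widetilde{e}_\tau$; by the standard horospherical/two-dimensional cross-section description of the dihedral angle recalled before Lemma~\ref{trigonometry:lemma}, this portion is a wedge of proportion $\alpha(\gamma_\tau\sigma_i,\widetilde{e}_\tau)/(2\pi)$ of the full ball of volume~$\eta_n$. Applying the lower bound~$\alpha(\gamma_\tau\sigma_i,\widetilde{e}_\tau)>\frac{2\pi}{k_n+1}(1+a_n)$ from Lemma~\ref{trigonometry:lemma} yields
\[
  \vol\bigl((\gamma_\tau\cdot\im(\sigma_i))\cap B(\inc(\widetilde{e}_\tau),\delta_n)\bigr)\ \geqslant\ \frac{1+a_n}{k_n+1}\,\eta_n,
\]
and summing over $i\in I_b$ and $\tau\in E(\sigma_i)$ gives the desired lower bound on $\vol(\overline{z}_-)$.

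Combining the three estimates with the degree-integration inequality yields
\[
  \ifsv{c}^\alpha v_n-\frac{1+a_n}{k_n+1}\eta_n\sum_{i\in I_b}\sum_{\tau\in E(\sigma_i)}\mu(F(\sigma_i,\tau))+\eta_n\sum_{e\in E}\mu(F(e))\ \geqslant\ \vol(M),
\]
which is exactly~\eqref{final:estimate}. The only subtle point is the geometric identification of the ball–simplex intersection as a genuine dihedral wedge; everything else is bookkeeping via linearity of~$\vol(\cdot)$ on generalized chains.
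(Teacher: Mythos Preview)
Your proposal is correct and follows essentially the same route as the paper's proof: integrate the degree bound from Proposition~\ref{difficult}, then estimate $\vol(\overline{z}_{\rm pos})$, $\vol(\overline{z}_+)$, and $\vol(\overline{z}_-)$ separately, invoking Lemma~\ref{palle} and Lemma~\ref{trigonometry:lemma} for the last one. The only difference is that you spell out the wedge-volume computation (that the ball--simplex intersection has volume $\alpha/(2\pi)\cdot\eta_n$, using the $SO(2)$-rotational symmetry about the ridge), whereas the paper simply asserts the resulting lower bound as a consequence of the choice of~$\delta_n$.
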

\begin{proof}
Of course we have
$$
\vol(\overline{z}_{\rm pos})=\sum_{i\in I_{\rm pos}} |\beta_i|\vol(\im(\sigma_i))\leq v_n\sum_{i\in I_{\rm pos}} |\beta_i|\leq \ifsv{c}^\alpha v_n
$$
and
$$
\vol(\overline{z}_+)=\sum_{e\in E} \mu(F(e)) \vol(B(\inc(\widetilde{e},\delta_n)))= \eta_n \sum_{e\in E} \mu(F(e))\ .
$$
Now recall that $\delta_n$ was chosen in such a way that, if $i\in I_b$, then the boundary of  
the ball of radius $\delta_n$ centered at the incenter of any ridge~$\tau$
of $\im(\sigma_i)$ does not meet any $(n-1)$-face of $\im(\sigma_i)$ not containing $\tau$. As a consequence, 
if $\mu(F(\sigma_i,\tau))\neq 0$, then
$\vol((\gamma_\tau\cdot \im(\sigma_i))\cap B(\inc(\widetilde{e}_\tau),\delta_n))\geqslant \eta_n(1+a_n)/(k_n+1)$. 
Therefore, we have
\begin{align*}
  \vol(\overline{z}_-) &= 
  \sum_{i\in I_b} \sum_{\tau\in E(\sigma_i)} \mu(F(\sigma_i,\tau)) \vol\left((\gamma_\tau\cdot \im(\sigma_i))\cap B(\inc(\widetilde{e}_\tau),\delta_n)\right)\\ &\geqslant
  \frac{1+a_n}{k_n+1}\eta_n  \sum_{i\in I_b} \sum_{\tau\in E(\sigma_i)} \mu(F(\sigma_i,\tau))\ .
  \end{align*}
  Therefore, we get that
  $$
\vol(\overline{z})\leq   \ifsv{c}^\alpha v_n-\frac{1+a_n}{k_n+1}\eta_n\sum_{i\in I_b} \sum_{\tau\in E(\sigma_i)} \mu(F(\sigma_i,\tau))
+\eta_n \sum_{e\in E} \mu(F(e))\ .
$$
On the other hand, 
by Proposition~\ref{difficult} we have
$$
\vol(M)\leq \int_M \deg_p(\overline{z})\, dp=\vol(\overline{z})\ ,
$$
and this concludes the proof.
\end{proof}

\begin{prop}\label{stima2}
 Suppose that $\ifsv{c_s}^\alpha\leq \ifsv{c}^\alpha/12$. Then
 $$
 \|M\| \leq \max\left\{ 1-\frac{\eta_n}{3v_n}, 1-\frac{a_n \eta_n}{2 v_n}\right\} \cdot \ifsv{c}^\alpha\ .
$$
\end{prop}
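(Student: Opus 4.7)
The plan is to combine Proposition~\ref{eta:estimate}, Proposition~\ref{final:estimate:prop}, and a complementary angular-overflow lower bound on $\vol(\overline z_-)$, followed by a case split on the relative sizes of
\[ S := \sum_{i\in I_b}\sum_{\tau\in E(\sigma_i)} \mu\bigl(F(\sigma_i,\tau)\bigr) \quad\text{and}\quad T := \sum_{e\in E}\mu\bigl(F(e)\bigr). \]
Under the standing hypothesis $\ifsv{c_s}^\alpha \leq \ifsv{c}^\alpha/12$, Proposition~\ref{eta:estimate} gives $S \geq 5\,\ifsv{c}^\alpha$. The definition of a full ridge forces $N_b(e,x)\geq k_n+1$ whenever $x\in F(e)$, and a standard double-counting-plus-integration argument then yields $S \geq (k_n+1)\,T$.

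The key new geometric ingredient is the bound
\[ \vol(\overline z_-) \geq (1+a_n)\,\eta_n\, T, \]
complementing the per-ridge lower bound $\vol(\overline z_-)\geq (1+a_n)\eta_n\, S/(k_n+1)$ already used in Proposition~\ref{final:estimate:prop}. To prove it, I would argue locally at each $x$-full ridge $e$: by Lemma~\ref{trigonometry:lemma} the $\geq k_n+1$ big simplices contributing pairs to $\Omega_b(e,x)$ each have dihedral angle at $\widetilde e$ exceeding $2\pi(1+a_n)/(k_n+1)$, so their angular total exceeds $2\pi(1+a_n)$; Lemma~\ref{palle} guarantees that the $\delta_n$-ball at $\inc(\widetilde e)$ meets only the two facets incident to the ridge, so each simplex-ball intersection is a genuine wedge of volume $\alpha(\sigma_i,\tau)\eta_n/(2\pi)$. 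Summing these wedge volumes at fixed $x$, integrating over $F(e)$, and then summing over $e\in E$ yields the stated bound.

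With both tools in place, I split on whether $T \leq 3S/\bigl(5(k_n+1)\bigr)$. In the first case, plugging this estimate into Proposition~\ref{final:estimate:prop} gives
\[ \vol(M) \leq \ifsv{c}^\alpha v_n - \frac{\eta_n S}{k_n+1}\!\left(\tfrac{2}{5} + a_n\right), \]
and using $S \geq 5\,\ifsv{c}^\alpha$ together with $k_n+1 \leq 6$ (which holds for every $n\geq 4$) yields $\|M\|\leq (1-\eta_n/(3v_n))\,\ifsv{c}^\alpha$ after simply discarding the non-negative $a_n$ contribution. In the second case, the new lower bound on $\vol(\overline z_-)$ together with $\vol(\overline z_+)=\eta_n T$ produces $\vol(M)\leq \ifsv{c}^\alpha v_n - a_n\eta_n\, T$, and the hypothesis $T > 3S/\bigl(5(k_n+1)\bigr)\geq \ifsv{c}^\alpha/2$ then gives $\|M\|\leq (1-a_n\eta_n/(2v_n))\,\ifsv{c}^\alpha$. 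In either case the estimate dominates the corresponding term in the~$\max$, so the conclusion follows.

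The only step that is genuinely new is the angular-overflow inequality for $\vol(\overline z_-)$, which relies on the full geometric setup of Lemmas~\ref{trigonometry:lemma} and~\ref{palle} together with a careful Fubini-style bookkeeping; the rest is arithmetic, and the threshold $3/5$ in the case split has been reverse-engineered precisely to produce the two coefficients $1/3$ and $a_n/2$ appearing in the $\max$.
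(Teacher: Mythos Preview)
Your proof is correct and follows essentially the same strategy as the paper: a case split on the size of $T=\sum_{e\in E}\mu(F(e))$, combining Proposition~\ref{final:estimate:prop} with $S\geq 5\,\ifsv{c}^\alpha$ in one case and with the double-counting inequality $S\geq (k_n+1)T$ in the other. The paper simply splits at $T\leq \ifsv{c}^\alpha/2$ rather than your threshold $3S/\bigl(5(k_n+1)\bigr)$, and your ``new geometric ingredient'' $\vol(\overline z_-)\geq (1+a_n)\eta_n T$ is in fact unnecessary: substituting $S\geq (k_n+1)T$ directly into inequality~\eqref{final:estimate} already yields $\vol(M)\leq \ifsv{c}^\alpha v_n - a_n\eta_n T$, which is exactly what you need in the second case without re-opening the volume estimate for~$\overline z_-$.
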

\begin{proof}
We divide the proof in two cases.
Suppose first that
$$\sum_{e\in E} \mu(F(e))\leq \frac{\ifsv{c}^\alpha}2\ .$$
Since $k_n+1\leqslant 6$ and $a_n>0$,
from Lemma~\ref{eta:estimate} we deduce that
$$
\frac{1+a_n}{k_n+1}\sum_{i\in I_b} \sum_{\tau\in E(\sigma_i)} \mu(F(\sigma_i,\tau))\geqslant \frac{5}{6} \ifsv{c}^\alpha\ .
$$
Plugging these inequalities into~\eqref{final:estimate}
we obtain
$$
\|M\|=\frac{\vol(M)}{v_n}\leqslant \ifsv{c}^\alpha+\frac{\eta_n}{v_n}\left(\frac{\ifsv{c}^\alpha}{2}-\frac{5\ifsv{c}^\alpha}{6}\right)
=\left(1-\frac{\eta_n}{3 v_n}\right) \ifsv{c}^\alpha .
$$

Now assume that
$\sum_{e\in E} \mu(F(e))\geqslant {\ifsv{c}^\alpha}/{2}$.
Recall that  around any $x$-full ridge
 $\widetilde{e}$ in $\mathbb{H}^n$ at least $k_n+1$ big simplices of $c_x$ appear. As a consequence for every $e\in E$ we have that
 $$
\sum_{i\in I_b} \sum_{\tau\in E(\sigma_i)}\mu(F(\sigma_i,\tau,e))\geqslant 
(k_n+1)\mu(F(e)) ,
$$
so
 $$
 \frac{1}{k_n+1}
\sum_{e\in E}\sum_{i\in I_b} \sum_{\tau\in E(\sigma_i)}\mu(F(\sigma_i,\tau,e))\geqslant 
\sum_{e\in E}\mu(F(e)) .
$$
Substituting this inequality into~\eqref{final:estimate} we get
\begin{align*}
\|M\|=\frac{\vol(M)}{v_n}&\leqslant \ifsv{c}^\alpha+\frac{\eta_n}{v_n}\left(\sum_{e\in E} \mu(F(e))-(1+a_n)\sum_{e\in E} \mu(F(e))\right)\\&=\ifsv{c}^\alpha-\frac{a_n\eta_n}{v_n}\sum_{e\in E} \mu(F(e))\leqslant 
\ifsv{c}^\alpha-\frac{a_n\eta_n\ifsv{c}^\alpha}{2v_n}.
\qedhere
\end{align*}
\end{proof}

The results proved in Propositions~\ref{obvious:estimate} and \ref{stima2} readily imply 
Theorem~\ref{quantitative:thm}.

\section{Integral foliated simplicial volume of aspherical manifolds with amenable fundamental group}\label{sec:amenablevanishing}

The section is devoted to the proof of Theorem~\ref{intro-thm:amenablevanishing}. 
Before we start, we remark that we cannot generalize Theorem~\ref{intro-thm:amenablevanishing} 
to rationally essential manifolds. 

\begin{rem}
  The statement of Theorem~\ref{intro-thm:amenablevanishing} does not hold
  if we replace ``aspherical'' by ``rationally essential''. For
  example, the oriented closed connected manifold~$M := (S^1)^4 \#
  (S^2 \times S^2)$ is rationally essential and has amenable
  residually finite fundamental group, but an $L^2$-Betti number
  estimate as in Remark~\ref{rem:freenonvanishing} shows that $\stisv
  M = \ifsv M >0$. 
\end{rem}

\subsection{Strategy of proof of Theorem~\ref{intro-thm:amenablevanishing}}

We start with a parametrized fundamental cycle~$c$ in 
$L^\infty(X,\mu,\Z)\otimes_{\Z\Gamma}C_n(\widetilde M;\Z)$ that 
comes from an integral fundamental cycle on~$M$.  
Using the generalized Rokhlin lemma by Ornstein-Weiss from ergodic theory 
we rewrite~$c$ 
as a sum of chains of the form $\chi_{A_i}\otimes \widetilde c_i$ 
with $A_i\subset X$ and $\widetilde c_i\in C_n(\widetilde M;\Z)$. The 
chains $\widetilde c_i$ will have boundaries that are small in norm. 
 We then fill~$\partial \widetilde c_i$ in the
contractible space~$\widetilde M$ more efficiently by 
chains~$\widetilde c'_i$. The norm of the parametrized fundamental cycle 
resulting from replacing~$\widetilde c_i$ by $\widetilde c'_i$ can be made 
arbitrarily small. 

We will now provide the details for this argument.

\subsection{Setup}

Let $n := \dim M$. The case~$n =1$ (i.e., $M \cong S^1$) being
trivial, we may assume without loss of generality that $n>1$.

Let $\pi \colon \widetilde M \longrightarrow M$ be the universal
covering of~$M$ and let $D$ be a strict set-theoretical fundamental
domain for the (left) deck transformation action of~$\Gamma := \pi_1(M)$
on~$\widetilde M$. We assume that $\Gamma$ is amenable. 
Furthermore, let $\alpha=\Gamma \actson (X,\mu)$ be a 
free standard $\Gamma$-space. 

Let $c = \sum_{j=1}^m a_j \cdot \sigma_j \in C_n(M;\Z)$ be an integral 
fundamental cycle of~$M$; we arrange this in such a (non-reduced) way
that the coefficients satisfy~$a_1, \dots, a_m \in \{-1,1\}$ (but we
still require that the same simplex does not occur with positive and
negative sign). We consider the unique lift~$\widetilde c :=
\sum_{j=1}^m a_j \cdot \widetilde \sigma_j \in C_n(\widetilde M;\Z)$
satisfying
\[ \pi \circ \widetilde \sigma_j = \sigma_j
\quad\text{and}\quad
\widetilde \sigma_j (e_0) \in D
\]
for all~$j \in \{1,\dots,m\}$. 

As next step, we define a suitable finite subset~$S \subset \Gamma$
that measures the defect of~$\widetilde c$ from being a cycle: Because
$c$ is a cycle we can match any face of~$\sigma_1, \dots, \sigma_m$
occurring with positive sign in the expression~$\partial c$ to another
one of these faces with negative sign. Let $\Sigma_+ \sqcup \Sigma_-$
be such a splitting of the set~$\Sigma$ of faces of~$\sigma_1, \dots,
\sigma_m$, and let $\widehat\cdot \colon \Sigma_+ \longrightarrow
\Sigma_-$ be the corresponding matching bijection; then
\[ \partial c = \sum_{\tau \in \Sigma_+} (\tau - \widehat \tau). 
\]
For each~$\tau \in
\Sigma$ we denote the corresponding face of~$\widetilde c$
by~$\widetilde \tau$. Then for each~$\tau \in \Sigma_+$ there is a
unique~$g_\tau \in \Gamma$ with
\[ \widetilde{\widehat \tau} = g_\tau \cdot \widetilde \tau. 
\]
By definition of~$\widehat \tau$, we then obtain
\[ \partial \widetilde c 
  = \sum_{\tau \in \Sigma_+} (\widetilde \tau - g_\tau \cdot \widetilde \tau). 
\] 
Let $S$ be a finite symmetric generating set of $\Gamma$ that contains 
$\{ g_\tau \mid \tau \in \Sigma_+ \}$.

\subsection{Fundamental cycles and the generalized Rokhlin lemma}

We denote the $S$-boundary of a subset $F\subset\Gamma$ by 
\[
	\partial_SF=\{\gamma\in F\mid \exists_{s\in S} ~\gamma s\not\in F\}.
\]
Fix $\varepsilon>0$. 
By a version~\cite[Theorem~5.2]{sauerminvol} of the generalized Rokhlin lemma of Ornstein-Weiss there are finite subsets $F_1,\ldots, F_N\subset\Gamma$ and Borel subsets~$A_1,\ldots, A_N\subset X$
such that 
\begin{itemize}
\item for every $i\in\{1,\ldots,N\}$, the set~$F_i$ is $(S,\varepsilon)$-invariant in the sense that $|\partial_S F_i|/|F_i|<\varepsilon$;
\item for every~$i\in \{1,\dots,N\}$, the sets $\gamma A_i$ with~$\gamma\in F_i$ are pairwise disjoint; 
\item the sets $F_iA_i$ with~$i \in \{1,\dots,N\}$ are pairwise disjoint;
\item $\mu(R)<\varepsilon$ where $R:=X\backslash\bigcup_{i=1}^N F_iA_i$.
\end{itemize}

Then the following computation holds in $L^\infty(X,\mu,\Z)\otimes_{\Z\Gamma}C_\ast(\widetilde M;\Z)$: 

\begin{align*}
	1\otimes \widetilde c &= \biggl(\sum_{j=1}^m \sum_{i=1}^N\sum_{\gamma\in F_i} a_j\chi_{\gamma A_i}\otimes\widetilde \sigma_j\biggr)+\underbrace{\sum_{j=1}^m a_j\chi_R\otimes\widetilde\sigma_j}_{=: E}\\
        &= \sum_{i=1}^N \chi_{A_i}\otimes \underbrace{\biggl(\sum_{j=1}^m\sum_{\gamma\in F_i} a_j\gamma^{-1}\widetilde\sigma_j \biggr)}_{=: \widetilde c_i}+E.
\end{align*}

In the following, let~$i \in \{1,\dots,N\}$. 
Interchanging summation, we have 
\[\widetilde c_i = \sum_{\gamma\in F_i} \sum_{j=1}^m a_j\gamma^{-1}\widetilde\sigma_j=\sum_{\gamma\in F_i} \gamma^{-1}\widetilde c.\]

\begin{lemma}[small boundary in~$\widetilde M$]\label{lem:smallboundary}
  The chain~$\widetilde c_i$ has small boundary; more precisely, we have
  \[ |\partial \widetilde c_i|_{1,\Z} \leq \varepsilon \cdot |F_i| \cdot |\Sigma_+|.
  \]
\end{lemma}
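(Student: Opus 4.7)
The plan is to unfold the definition of $\widetilde c_i$ together with the formula for $\partial \widetilde c$ established above, and exploit cancellation among the translates so that only a boundary contribution of $F_i$ survives. Concretely, since $\widetilde c_i = \sum_{\gamma \in F_i} \gamma^{-1} \widetilde c$ and $\partial \widetilde c = \sum_{\tau \in \Sigma_+} (\widetilde \tau - g_\tau \widetilde \tau)$, I would first write
\[
  \partial \widetilde c_i
  \;=\; \sum_{\tau \in \Sigma_+} \sum_{\gamma \in F_i} \bigl(\gamma^{-1} \widetilde \tau - \gamma^{-1} g_\tau \widetilde \tau\bigr).
\]

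Next, for each fixed $\tau \in \Sigma_+$ I would reindex the negative part by the substitution $\delta = g_\tau^{-1} \gamma$, which turns the index set $F_i$ into $g_\tau^{-1} F_i$, yielding
\[
  \sum_{\gamma \in F_i} \gamma^{-1} g_\tau \widetilde \tau
  \;=\; \sum_{\delta \in g_\tau^{-1} F_i} \delta^{-1} \widetilde \tau.
\]
Terms with index in $F_i \cap g_\tau^{-1} F_i$ then cancel, leaving
\[
  \sum_{\gamma \in F_i \setminus g_\tau^{-1} F_i} \gamma^{-1} \widetilde \tau
  \;-\;
  \sum_{\delta \in g_\tau^{-1} F_i \setminus F_i} \delta^{-1} \widetilde \tau.
\]
Since the $\Gamma$-action on $\widetilde M$ is free, distinct group elements produce distinct translates of $\widetilde \tau$, so the $\ell^1$-norm of this expression equals $|F_i \setminus g_\tau^{-1} F_i| + |g_\tau^{-1} F_i \setminus F_i|$.

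Finally, because $g_\tau \in S$ by the choice of $S$, any $\gamma \in F_i$ with $g_\tau \gamma \notin F_i$ (respectively any $\delta \notin F_i$ with $g_\tau \delta \in F_i$) contributes to the $S$-boundary of $F_i$, so both sets are controlled by $|\partial_S F_i|$. Summing the contributions over all $\tau \in \Sigma_+$ and invoking the $(S,\varepsilon)$-invariance $|\partial_S F_i| \leq \varepsilon |F_i|$ coming from the Ornstein–Weiss Rokhlin lemma gives the desired bound
\[
  |\partial \widetilde c_i|_{1,\Z} \;\leq\; \varepsilon \cdot |F_i| \cdot |\Sigma_+|,
\]
up to matching the precise boundary convention used in the cited version of the generalized Rokhlin lemma. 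The main subtlety I expect to need to check carefully is exactly this reconciliation of left- versus right-translation conventions in the definition of $\partial_S F_i$; using that $S$ is symmetric and contains all $g_\tau$, the two conventions are equivalent up to an overall constant that can be absorbed by enlarging $S$ or shrinking $\varepsilon$ in the application of the Rokhlin lemma.
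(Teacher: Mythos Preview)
Your approach is essentially the same as the paper's: expand $\partial\widetilde c_i$ using $\partial\widetilde c=\sum_{\tau\in\Sigma_+}(\widetilde\tau-g_\tau\widetilde\tau)$, observe that the translates indexed by the bulk of $F_i$ cancel, and bound the surviving terms by $|\partial_S F_i|\cdot|\Sigma_+|$ before invoking $(S,\varepsilon)$-invariance. If anything, you are slightly more explicit than the paper about the reindexing and about the left/right convention issue for $\partial_S F_i$; your observation that any resulting constant can be absorbed into the choice of~$\varepsilon$ (since the lemma is only applied to make $|\partial\widetilde c_i|_{1,\Z}$ arbitrarily small relative to $|F_i|$) is exactly the right way to dispose of that subtlety.
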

\begin{proof}
  Because $F_i$ is $(S, \varepsilon)$-invariant, it suffices to show 
  $|\partial \widetilde c_i|_{1,\Z} \leq |\partial_S F_i| 
                                         \cdot |\Sigma_+|$. By construction, 
  \begin{align*} 
    \partial \widetilde c_i 
  & = \sum_{\gamma \in F_i} 
      \sum_{\tau \in \Sigma_+} \gamma^{-1} \cdot (\widetilde \tau - g_\tau \cdot \widetilde \tau)
  \\
  & = \sum_{\gamma \in F_i} \sum_{\tau \in \Sigma_+} \gamma^{-1} \cdot \widetilde \tau 
    - \sum_{\gamma \in F_i} \sum_{\tau \in \Sigma_+} \gamma^{-1} \cdot g_\tau \cdot \widetilde \tau.
  \end{align*}
  So, the only terms in~$\partial \widetilde c_i$ that do \emph{not}
  cancel are of the shape~$\pm\gamma^{-1} \cdot \widetilde \tau$
  with~$\tau \in \Sigma_+$ and $\gamma^{-1} \in \partial_S F_i$. 
  Hence, 
  $|\partial \widetilde c_i|_{1,\Z} 
     \leq |\partial_S F_i| \cdot |\Sigma_+|$.
\end{proof}

\subsection{Efficient filling of the boundary}

We will now fill the boundary of~$\widetilde c_i$ more efficiently
in~$\widetilde M$.

\begin{lemma}[efficient filling]\label{lem:efficientfilling}
  There exists a chain~$\widetilde c'_i \in C_n(\widetilde M;\Z)$ 
  with
  \[ \partial \widetilde c'_i = \partial \widetilde c_i
     \quad
     \text{and}
     \quad
     |\widetilde c'_i|_{1,\Z} \leq (n+1) \cdot | \partial \widetilde c_i|_{1,\Z}.
  \]
\end{lemma}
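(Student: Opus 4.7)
My plan rests on the fact that $\widetilde M$ is contractible, which holds because $M$ is aspherical. Given this, the chain $\partial \widetilde c_i \in C_{n-1}(\widetilde M; \Z)$ is not merely a cycle (by $\partial^2=0$) but in fact a boundary, so the task reduces to producing an integral filling of controlled norm.

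The natural choice is an explicit integral cone construction. First I would fix any basepoint $p \in \widetilde M$ together with a continuous contraction $H \colon \widetilde M \times [0,1] \to \widetilde M$ satisfying $H(\cdot, 0) = \id_{\widetilde M}$ and $H(\cdot, 1) \equiv p$; such an $H$ exists by contractibility of $\widetilde M$. Using $H$, one defines a cone operator $C_H \colon C_{n-1}(\widetilde M; \Z) \to C_n(\widetilde M; \Z)$ by sending each singular simplex $\tau \colon \Delta^{n-1} \to \widetilde M$ to the singular $n$-simplex
\[
  C_H(\tau)(t_0, t_1, \dots, t_n) :=
  \begin{cases}
    H\bigl(\tau\bigl((t_1, \dots, t_n)/(1-t_0)\bigr),\, t_0\bigr) & \text{if } t_0 < 1,\\
    p & \text{if } t_0 = 1,
  \end{cases}
\]
and extending linearly over $\Z$. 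Continuity at $t_0 = 1$ is immediate from $H(\cdot,1)\equiv p$, and integrality is preserved since $C_H$ is a $\Z$-linear map between integral chain complexes.

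A direct face-by-face computation on $\Delta^n$ (the face opposite to $e_0$ recovers $\tau$, while the remaining $n$ faces all contain $p$ and parametrize cones on the faces of $\tau$) yields the standard cone identity $\partial C_H(\tau) = \tau - C_H(\partial \tau)$ for every $(n-1)$-simplex $\tau$. Hence for any cycle $z \in C_{n-1}(\widetilde M; \Z)$ one has $\partial C_H(z) = z - C_H(\partial z) = z$. Applying this to the cycle $z := \partial \widetilde c_i$ and setting $\widetilde c'_i := C_H(\partial \widetilde c_i)$ produces the required filling: $\partial \widetilde c'_i = \partial \widetilde c_i$ by construction.

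Since $C_H$ sends each singular simplex to a single singular simplex, one obtains even the sharper bound
\[
   |\widetilde c'_i|_{1,\Z} \;\leq\; |\partial \widetilde c_i|_{1,\Z} \;\leq\; (n+1)\cdot |\partial \widetilde c_i|_{1,\Z},
\]
as required. No serious obstacle is anticipated; the only step requiring care is verifying the cone identity, and this is routine. The factor $(n+1)$ in the statement is thus a generous overestimate; it would naturally appear if one instead used a prism-style filling built from a subdivision of $\Delta^{n-1}\times[0,1]$ plus a degenerate correction term at $p$, but the cone construction above is simpler and sufficient.
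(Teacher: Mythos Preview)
Your argument is correct and in fact yields the sharper bound $|\widetilde c'_i|_{1,\Z} \leq |\partial \widetilde c_i|_{1,\Z}$. The paper proceeds differently: instead of a cone operator, it uses the prism chain homotopy $h_*$ associated with a contraction of~$\widetilde M$ (so $\|h_{n-1}\| \leq n$), and then adds a correction term~$z_i$ filling the image of $\partial\widetilde c_i$ under the constant map, with $|z_i|_{1,\Z} \leq |\partial\widetilde c_i|_{1,\Z}$; this two-step construction is precisely where the factor $n+1 = n + 1$ originates. Your cone construction bypasses the correction term entirely because the new vertex is built in from the start, which is why you get the better constant. Both approaches rest on the same essential input (contractibility of~$\widetilde M$), but yours is the more economical realization; the paper's version has the minor advantage of invoking only the textbook prism operator rather than checking the cone identity by hand, and of not needing to verify continuity of the cone simplex at the apex.
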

\begin{proof}
  Because $M$ is aspherical, the universal covering~$\widetilde M$ is
  contractible. Then any homotopy between~$\id_{\widetilde M}$ and a constant 
  map~$p \colon \widetilde M \longrightarrow \widetilde M$ induces 
  a chain homotopy~$h_* \colon C_*(\widetilde M;\Z) \longrightarrow 
  C_{*+1}(\widetilde M;\Z)$ between~$\id_{C_*(\widetilde M;\Z)}$ and~$C_*(p;\Z)$ 
  satisfying
  \[ \| h_k \| \leq k+1
  \]
  (with respect to~$|\cdot|_{1,\Z}$) for all~$k \in \matN$; this
  follows from the standard construction~\cite[Theorem~2.10]{hatcher}
  of~$h_*$ by subdividing~$\Delta^k \times [0,1]$ into $k+1$ simplices
  of dimension~$k$.

  By construction,
  \[ \widetilde b_i := \partial \widetilde c_i \in C_{n-1}(\widetilde M;\Z) 
  \]
  is a cycle. Moreover, because $n > 1$ we have~$H_{n-1}(\bullet;\Z)
  \cong 0$ and for any cycle~$d \in C_{n-1}(\bullet;\Z)$ there exists
  a chain~$z \in C_n(\bullet;\Z)$ with~$\partial z = d$ and
  $|z|_{1,\Z} \leq |d|_{1,\Z}$ (this follows from a direct computation
  in~$C_*(\bullet;\Z)$). Because $p$ factors over a one-point
  space~$\bullet$, we thus find a chain~$z_i \in C_n(\widetilde M;\Z)$
  with
  \[ \partial z_i = C_{n-1}(p;\Z)(\widetilde b_i) 
     \quad\text{and}\quad
     |z_i|_{1,\Z} \leq \bigl|C_{n-1}(p;\Z)(\widetilde b_i)\bigr|_{1,\Z} 
                \leq |\widetilde b_i|_{1,\Z}.
  \]

  We now consider
  \[ \widetilde c'_i := h_{n-1}(\widetilde b_i) + z_i \in C_n(\widetilde M;\Z). 
  \]
  Then the chain homotopy relation~$\partial \circ h + h \circ
  \partial = \id - C_*(p;\Z)$ shows that
  \begin{align*}
    \partial \widetilde c'_i 
    & = \partial h_{n-1}(\widetilde b_i) + \partial z_i
    \\
    & = \widetilde b_i - C_{n-1}(p;\Z)(\widetilde b_i) 
      - h_{n-2} \circ \partial \widetilde b_i + \partial z_i
    \\
    & = \widetilde b_i = \partial \widetilde c_i
  \end{align*}
  and
  \begin{align*}
    |\widetilde c'_i|_{1,\Z} 
    \leq n \cdot |\widetilde b_i|_{1,\Z} + |z_i|_{1,\Z}
    \leq (n+1) \cdot |\widetilde b_i|_{1,\Z},
  \end{align*}
  as claimed.
\end{proof}

With the $\widetilde c_i'$ obtained from the previous lemma, we define a 
new chain 
\[
	c':=\biggl(\sum_{i=1}^N \chi_{A_i}\otimes \widetilde c_i'\biggr) + E\in L^\infty(X,\mu,\Z)\otimes_{\Z\Gamma}C_n(\widetilde M;\Z).
\]

\subsection{Efficient fundamental cycles}\label{subsec:efficient}

\begin{lemma}\label{lem:cifundcycle}
  The chain~$c'$ is an $\alpha$-parametrized fundamental cycle of~$M$.
\end{lemma}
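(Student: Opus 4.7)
My plan is to prove that $c'$ is a cycle and that it represents the same homology class as~$1 \otimes \widetilde c$, which in turn represents $\pfcl M^\alpha$ by construction.

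For the cycle condition, I would compute $\partial c'$ directly. Since $\partial\widetilde c_i' = \partial \widetilde c_i$ by Lemma~\ref{lem:efficientfilling}, the boundary operator $\id \otimes \partial$ on $C_\ast(M;\alpha) = L^\infty(X,\mu,\Z)\otimes_{\Z\Gamma}C_\ast(\widetilde M;\Z)$ gives
\[
\partial c' = \sum_{i=1}^N \chi_{A_i} \otimes \partial\widetilde c_i + \partial E = \partial\biggl(\sum_{i=1}^N \chi_{A_i}\otimes \widetilde c_i + E\biggr) = \partial(1 \otimes \widetilde c) = 1 \otimes \partial\widetilde c.
\]
Now recall the explicit formula $\partial\widetilde c = \sum_{\tau \in \Sigma_+}(\widetilde\tau - g_\tau \cdot \widetilde\tau)$. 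Since the constant function~$1\in L^\infty(X,\mu,\Z)$ is $\Gamma$-invariant (i.e., $g_\tau^\alpha(1)=1$), the tensor relation over $\Z\Gamma$ yields $1 \otimes g_\tau \cdot \widetilde\tau = g_\tau^\alpha(1)\otimes \widetilde\tau = 1\otimes \widetilde\tau$, so each summand of~$1 \otimes \partial\widetilde c$ vanishes, whence $\partial c' = 0$.

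For the homology class, I would observe that
\[
c' - (1 \otimes \widetilde c) = \sum_{i=1}^N \chi_{A_i}\otimes(\widetilde c_i' - \widetilde c_i),
\]
and each $\widetilde c_i' - \widetilde c_i \in C_n(\widetilde M;\Z)$ is a cycle because $\partial\widetilde c_i' = \partial \widetilde c_i$. Since $M$ is aspherical and $n\geq 2$, the universal covering $\widetilde M$ is contractible and hence $H_n(\widetilde M;\Z)=0$. Therefore, for each~$i$ we may choose a chain $\widetilde d_i \in C_{n+1}(\widetilde M;\Z)$ with $\partial \widetilde d_i = \widetilde c_i' - \widetilde c_i$. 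Setting $b := \sum_{i=1}^N \chi_{A_i}\otimes \widetilde d_i \in C_{n+1}(M;\alpha)$, we obtain $\partial b = c' - (1\otimes \widetilde c)$.

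Since $1\otimes \widetilde c$ is the image of the integral fundamental cycle~$c$ under $i_M^\alpha$, it represents $\pfcl M^\alpha$. Combined with the vanishing $\partial c'=0$ and the homology $[c'] = [1\otimes\widetilde c]= \pfcl M^\alpha$ witnessed by~$b$, this shows that $c'$ is an $\alpha$-parametrized fundamental cycle of~$M$. No step presents a real obstacle here; the only point that requires care is the invocation of $\Gamma$-invariance of the constant function~$1$ to kill $1\otimes\partial\widetilde c$, which is precisely the reason we restricted to generators of the form $\widetilde\tau - g_\tau\widetilde\tau$ in the setup.
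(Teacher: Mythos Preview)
Your proof is correct and follows essentially the same approach as the paper: both use contractibility of~$\widetilde M$ (with $n\geq 2$) to find $(n+1)$-chains bounding $\widetilde c_i' - \widetilde c_i$ and thereby express $c' - (1\otimes\widetilde c)$ as a boundary. The paper does this in one stroke by writing $c' = 1\otimes\widetilde c + \partial\bigl(\sum_i \chi_{A_i}\otimes\widetilde w_i\bigr)$, which makes your separate verification of $\partial c'=0$ redundant (though certainly not wrong).
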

\begin{proof}
Since $\widetilde M$ is aspherical and 
$\partial \widetilde c_i=\partial \widetilde c_i'$, there are chains 
$\widetilde w_i\in C_{n+1}(\widetilde M;\Z)$ with $\partial w_i=\widetilde c_i-\widetilde c_i'$. The claim now follows from the computation 
\begin{align*}
	c' = \biggl(\sum_{i=1}^N \chi_{A_i}\otimes \widetilde c_i'\biggr) + E
	  &= \biggl(\sum_{i=1}^N \chi_{A_i}\otimes \widetilde c_i\biggr) + E + \sum_{i=1}^N \chi_{A_i}\otimes \widetilde \partial w_i\\
	  &= \biggl(\sum_{i=1}^N \chi_{A_i}\otimes \widetilde c_i\biggr) + E + \partial\biggl(\sum_{i=1}^N \chi_{A_i}\otimes \widetilde w_i\biggr)\\
	  &= 1\otimes \widetilde c+\partial\biggl(\sum_{i=1}^N \chi_{A_i}\otimes \widetilde w_i\biggr)
\end{align*}
and the fact that $1 \otimes \widetilde c$ is an $\alpha$-parametrized
fundamental cycle.
\end{proof}

\subsection{Conclusion of the proof}

The previous two lemmas imply the following estimate. 
\begin{align*}
	\ifsv M^\alpha \le \ifsv {c'}^\alpha 
	               &\le \sum_{i=1}^N \mu(A_i)|\widetilde c_i'|_{1,\Z}+\ifsv E^\alpha\\
				   &\le \sum_{i=1}^N  \mu(A_i)|\widetilde c_i'|_{1,\Z}+\mu(R)|\widetilde c|_{1,\Z}\\
				   &\le \biggl(\sum_{i=1}^N (n+1)\mu(A_i)|\partial \widetilde c_i|_{1,\Z}\biggr)+\varepsilon |\widetilde c|_{1,\Z}\\
				   &\le |\Sigma_+|\varepsilon (n+1)\biggl(\sum_{i=1}^N \mu(A_i)|F_i|\biggr)+\varepsilon |\widetilde c|_{1,\Z}\\
				   &=|\Sigma_+|\varepsilon (n+1)\biggl(\sum_{i=1}^N \mu(F_iA_i)\biggr)+\varepsilon |\widetilde c|_{1,\Z}\\
				   &=|\Sigma_+|\varepsilon (n+1)\mu\biggl(\bigcup_{i=1}^NF_iA_i\biggr)+\varepsilon |\widetilde c|_{1,\Z}\\
				   &\le \varepsilon \bigl( |\Sigma_+|(n+1)+|\widetilde c|_{1,\Z} \bigr).
\end{align*}

Since $\varepsilon>0$ was arbitrary, the above estimate finishes the proof of
Theorem~\ref{intro-thm:amenablevanishing}.


\end{document}